\providecommand{\eprint}[2][]{\href{http://arxiv.org/abs/#2}{arXiv:#2}}
\newtheorem{theorem}{Theorem}[section]
\newtheorem{lemma}[theorem]{Lemma}
\newtheorem{proposition}[theorem]{Proposition}
\newtheorem{corollary}[theorem]{Corollary}
\theoremstyle{definition}
\newtheorem{definition}[theorem]{Definition}
\newtheorem{example}[theorem]{Example}
\newtheorem{notation}[theorem]{Notation}
\theoremstyle{remark}
\newtheorem{remark}[theorem]{Remark}
\numberwithin{equation}{section}
\newcommand{\K}{\mathbb{K}}
\newcommand{\C}{\mathbb{C}}
\newcommand{\Z}{\mathbb{Z}}
\newcommand{\sM}{\mathcal{M}}
\newcommand{\Id}{\operatorname{Id}}
\newcommand{\de}{\partial}
\newcommand{\debar}{\overline{\partial}}
\newcommand{\bi}{\boldsymbol{i}}
\newcommand{\bl}{\boldsymbol{l}}
\newcommand{\Def}{\operatorname{Def}}
\newcommand{\MC}{\operatorname{MC}}
\newcommand{\Hom}{\operatorname{Hom}}
\newcommand{\End}{\operatorname{End}}
\newcommand{\Aff}{\operatorname{Aff}}
\newcommand{\Coker}{\operatorname{coker}}
\newcommand{\Aut}{\operatorname{Aut}}
\newcommand{\Grass}{\operatorname{Grass}}
\newcommand{\Hilb}{\operatorname{Hilb}}
\newcommand{\contr}{{\mspace{1mu}\lrcorner\mspace{1.5mu}}}
\newcommand{\cone}{\operatorname{cone}}
\newcommand{\Art}{\mathbf{Art}}
\newcommand{\Set}{\mathbf{Set}}
\begin{document}

\title{Formal Abel-Jacobi maps}
\author{Domenico Fiorenza}
\address{\newline
Universit\`a degli studi di Roma ``La Sapienza'',\hfill\newline
Dipartimento di Matematica \lq\lq Guido
Castelnuovo\rq\rq,\hfill\newline
P.le Aldo Moro 5,
I-00185 Roma, Italy.}
\email{fiorenza@mat.uniroma1.it}
\urladdr{www.mat.uniroma1.it/people/fiorenza/}
\author{Marco Manetti}
\email{manetti@mat.uniroma1.it}
\urladdr{www.mat.uniroma1.it/people/manetti/}

\subjclass[2010]{18G55, 13D10}
\keywords{Homotopical algebra, differential graded Lie algebras}

\date{October 28, 2016}
\maketitle

\begin{abstract}
We realize the infinitesimal Abel-Jacobi map as a morphism of formal deformation theories, realized as a morphism in the homotopy category of differential graded Lie algebras. 
The whole construction is carried out in a general setting, of which the classical Abel-Jacobi map is a special example.
\end{abstract}

\section*{Introduction}

One of the main themes in deformation theory in the last 20 years is the proof that certain classical 
maps in complex algebraic geometry
may be interpreted as the ``classical'' restrictions of some morphisms of deformation theories.
For the Griffiths period map and the Abel-Jacobi map, which are the morphism we are concerned with in the present paper, this has been investigated in 
\cite{buchFlenner,FioMart,semireg2011,ManettiSeattle,Pri}.

The notion of deformation theory, intended as above, has been only recently 
clarified at different levels of generality and abstraction by means of ideas and methods of homotopical algebra and derived algebraic geometry, cf. \cite{Lur,Pri10}. 
In this paper, by a deformation theory over a field of characteristic $0$ we shall intend the 
homotopy type of a DG-Lie algebra, following the well-known principle by Deligne and Drinfeld that   
in characteristic 0, a deformation problem is controlled by a differential graded Lie algebra, with quasi-isomorphic DG-Lie algebras giving the same deformation theory. A morphism of deformation theories is a morphism in the homotopy category of DG-Lie algebras, i.e., in the category obtained by formally inverting all the quasi-isomorphisms, see e.g. \cite[Def. 1.2.1]{Hov99}.

The relation between DG-Lie algebras $L$ and local moduli spaces is obtained 
via Maurer-Cartan equation
and gauge equivalence, cf. \cite{GoMil1}.  
The first cohomology group $H^1(L)$ 
is equal to the Zariski tangent space
of the local moduli space, while $H^2(L)$ is a complete obstruction space.
A morphism of deformation theories, i.e.,  a morphism in the homotopy  category
of DG-Lie algebras induces a well defined morphism  in
cohomology, giving, in degrees 1 and 2, the tangent and obstruction map, respectively.

Particularly interesting is the case of unobstructed deformation theories with nontrivial obstruction spaces: in fact,  every morphism from a deformation theory $L$ into an unobstructed deformation 
theory $M$ provides an obstruction map annihilating every obstruction of $L$. 
We refer to \cite{ManettiSeattle} for more details  and examples. Particularly relevant for this paper is the example of the Grassmann formal moduli space of subcomplexes up to homotopy of a fixed complex, see 
Section~\ref{sec.grassmann}. If $V$ is a complex of vector spaces and $F\subseteq V$ is a subcomplex such that the natural map $H^*(F)\to H^*(V)$ is injective, then the the corresponding ``classical'' moduli space is unobstructed although in general it has a nontrivial obstruction space, cf. \cite{Carmelo1}.  

While the Griffiths' period map has been treated extensively in \cite{Yuka,Carmelo2,FMperiods,FioMart}, here we focus our attention to 
the local Abel-Jacobi maps. 
More precisely we want to give a precise algebraic description of the following heuristic picture. 
Let $X$ be a compact K\"{a}hler manifold and let $Z\subset X$ be a smooth compact closed submanifold of codimension $p$. Denoting by $\Def_X$ the deformations of $X$ and by $\Def_{(X,Z)}$ the deformations of the pair $(X,Z)$, it has been proved in \cite{ManettiSemireg} that $\Hilb_{Z|X}$, the deformation theory of embedded deformations of $Z$ inside $X$, is the homotopy fiber of the natural forgetful morphism 
$\alpha\colon\Def_{(X,Z)}\to\Def_{X}$.  For every (small) deformation $X_\xi$ of $X$, the Gauss-Manin connection provides a natural isomorphism $H^*(X_{\xi},\C)\simeq H^*(X,\C)$ and the $p$th period map is defined as 
\[ \mathfrak{p}\colon \Def_X\to \Grass(H^*(X,\C)),\qquad X_{\xi}\mapsto F^pH^*(X_{\xi},\C)=\oplus_{i\ge p}H^{i,*}(X_{\xi})\,.\] 
Following \cite{Yuka,FMperiods}, the period map $\mathfrak{p}$ 
depends explicitly and functorially on the contraction product of differential forms, or  currents, with vector fields of type $(1,0)$. 
Denoting by $\pi\colon Q\to \Grass(H^*(X,\C))$ the tautological quotient bundle, with a  shift in the degree of the fibers of magnitude $2p-1$, the submanifold $Z$ gives a lifting of the period map
\[ \mathfrak{q}\colon \Def_X\to Q,\qquad X_{\xi}\mapsto \left(F^pH^*(X_{\xi},\C),
[Z]\in \frac{H^*(X,\C)}{F^pH^*(X_{\xi},\C)}[2p-1]\right)
\,,\] 
such that, if  the cohomology class of $Z$ remains of type $(p,p)$ in $X_{\xi}$, then $\mathfrak{q}(X_{\xi})$ is contained in the image of the zero section $0\colon \Grass(H^*(X,\C))\to Q$. A description of the lift $\mathfrak{q}$ avoiding the noncanonical choice of a cocycle representing the cohomology class of $Z$ and independent of the particular K\"ahler metric on $X$ can be obtained by noticing that the Hodge filtration on de Rham cohomology can be equally obtained by considering the de Rham complex $D_X^{*,*}$ of currents instead of differential forms. The integration map $\int_Z$ is then a well defined closed element in $D_X^{*,*}$. The degree shifting  
is geometrically motivated by the fact that Abel-Jacobi maps are defined in terms of integrals of differential forms on 
submanifolds of real codimension $2p-1$. 

In particular we have a commutative diagram
\begin{equation*}
\xymatrix{\Def_{(X,Z)}\ar[d]^{\alpha}\ar[r]^-{\mathfrak{p}}&\Grass(H^*(X,\C))\ar[d]^0\\
\Def_X\ar[r]^{\mathfrak{q}}&Q}\end{equation*}
and therefore a morphism from $\Hilb_{Z|X}$ to the homotopy fiber of the $0$ section. By a general argument in homotopical algebra the homotopy fiber of a section  is the loop space of the fiber, and then  we have  a map 
\[ \mathcal{AJ}\colon \Hilb_{Z|X}\to \Omega \left(\frac{H^*(X,\C)}{F^pH^*(X,\C)}[2p-1]\right)\simeq\frac{H^*(X,\C)}{F^pH^*(X,\C)}[2p-2].\]
which at the level of tangent and obstruction spaces gives the maps:
\[ AJ_1\colon H^0(Z,N_{Z|X})\to H^1\left(\frac{H^*(X,\C)}{F^pH^*(X,\C)}[2p-2]\right)=
\bigoplus_{i<p}H^{i,2p-1-i}(X),\]
\[ AJ_2\colon H^1(Z,N_{Z|X})\to H^2\left(\frac{H^*(X,\C)}{F^pH^*(X,\C)}[2p-2]\right)=
\bigoplus_{i<p}H^{i,2p-i}(X)\,.\]
One easily recognizes the codomain of $AJ_1$ as the tangent space of the $p$th intermediate Jacobian of $X$, cf. \cite{Green},  and the codomain of $AJ_2$ as the codomain of the semiregularity map, cf. \cite{semireg2011}.  A closer inspection reveals that the maps $AJ_1$ and $AJ_2$ are indeed the usual infinitesimal Abel-Jacobi and  semiregularity maps, respectively. A pleasant consequence of the unobstructedness of the Grassmannian is then that the obstruction space of $\Hilb_{Z|X}$ is contained in the kernel of $AJ_2$, and one gets a proof of the classical principle ``semiregularity kills obstructions'' \cite{bloch,semireg2011}.

\medskip 

The main goal of this paper is to give a rigorous proof of the above results, while providing an abstract algebraic analogue of the above geometric situation which is adaptable to more general deformation problems. Up to a suitable shifting degree,  the role of 
$D_X^{*,*}$ is played by an arbitrary complex of vector spaces $V$, the $p$th stage of the Hodge filtration $F^pD_X^{*,*}$  is replaced by an arbitrary subcomplex $F\subseteq V$, the role of $\int_Z$ is played by a  cocycle  
$v\in F$ and $\Def_X$ is replaced by an arbitrary DG-Lie algebra $\mathfrak{g}$. In the abstract setting, the contraction product is replaced by 
a linear map $\bi\colon \mathfrak{g}\to \Hom^*_{\K}(V,V)$ of degree $-1$, called \emph{Cartan homotopy}, such that for every $x,y\in \mathfrak{g}$ 
\[ [\bi_x,\bi_y]=0,\qquad \bi_{[x,y]}=[\bi_x,d\bi_y],\qquad
(d\bi_x+\bi_{dx})(F)\subseteq F\,.\]  
Finally the morphism $\alpha\colon \Def_{(X,Z)}\to \Def_{X}$ is replaced by a morphism 
$\alpha\colon \tilde{\mathfrak{g}}\to \mathfrak{g}$ of DG-Lie algebras such that 
$\bi_{\alpha(x)}v=0$ for every $x\in \tilde{\mathfrak{g}}$. For simplicity of exposition we shall only consider here the case where $\alpha$ is injective; the general case is a straightforward generalization.

\tableofcontents

\bigskip
\section{Notational setup}

We shall work over a fixed field $\K$ of characteristic 0. Unless otherwise specified the symbol $\otimes$ means the tensor product over $\K$. Every differential graded (DG) vector space $V$ over $\K$ is intended to have the differential $d\colon V\to V$ of degree $+1$.

Let $C$ be a finite category. 
We denote by $\Omega^{\bullet}(C,\K)$ the  differential graded commutative algebra of polynomial differential forms on the nerve of $C$, see e.g. \cite[Def. 2.1]{BG76}. More generally, for every differential graded graded vector space $V$ we set $\Omega^{\bullet}(C,V)=\Omega^{\bullet}(C,\K)\otimes V$.

A $C$-shaped diagram in a category $\sM$ is a covariant functor
$C\to \sM$. The collection $Fun(C,\sM)$ of all $C$-shaped diagrams in $\sM$ is naturally a category, with natural transformations as morphisms. When $C$ is a Reedy category and $\sM$ is a model category, then $Fun(C,\sM)$ is naturally equipped with a model category structure as well: the Reedy model structure. In this article we will not need the full generality of Reedy categories, but we will only deal with 
fibrations and weak equivalences, and with shapes $C$ that are finite posets, which  may be considered as Reedy categories in two natural ways: the direct way, where every non-identity arrow raises degree, and the inverse way, where every non-identity arrow lowers degree. 
Therefore, for the convenience of the reader, we recall here the structure of fibrations and weak equivalences in the  direct and the inverse Reedy model category structure on 
$Fun(C,\sM)$ for $C$ a finite poset, addressing the interested reader to \cite[Ch. 12]{Hir03} for the general definition and basic properties of Reedy model categories.

\begin{remark}\label{rem.reedy}
Let $C$ be a finite poset and let $\sM$ be a model category. 
In the direct Reedy model category structure on  $Fun(C,\sM)$, a morphism of diagrams 
\[ f\colon X\to Y,\qquad X,Y\in Fun(C,\sM),\] 
is: 
\begin{itemize}
\item a \emph{weak equivalence}  if $f_i\colon X_i\to Y_i$ is a weak equivalence for every $i\in C$;
\item a \emph{fibration} if $f_i\colon X_i\to Y_i$ is a fibration for every $i\in C$.
\end{itemize}
In the inverse Reedy model category structure on  $Fun(C,\sM)$, a morphism of diagrams 
\[ f\colon X\to Y,\qquad X,Y\in Fun(C,\sM),\] 
is: 
\begin{itemize}
\item a \emph{weak equivalence}  if $f_i\colon X_i\to Y_i$ is a weak equivalence for every $i\in C$;
\item a \emph{fibration} if 
\[
X_i\to \lim_{j>i}X_j\times_{\lim_{j>i}Y_j} Y_i
\]
is a fibration for every $i\in C$.
\end{itemize}
In particular both model structures have the same homotopy category. Recall that 
two diagrams $X,Y\in Fun(C,\sM)$ are  homotopy equivalent if they are joined by a zigzag of weak equivalences. 
\end{remark}

In this paper we are mainly interested to the following cases:

\begin{enumerate} 

\item $C=\Delta^1=\{0\to 1\}$, i.e.,   every diagram is a  morphism. Thus  
\[ \Omega^{\bullet}(\Delta^1,\K)=\frac{\K[t_0,t_1,dt_0,dt_1]}{(t_0+t_1-1,dt_0+dt_1)}\,,\] 
the integration map has degree $-1$ and may by defined by the explicit formula
\[  \int_{\Delta^1}\colon \Omega^{\bullet}(\Delta^1,\K)\to \K,\qquad 
\int_{\Delta^1}t_0^at_1^bdt_1=\int_0^1(1-t)^at^bdt=\frac{a!\, b!}{(a+b+1)!}\,.\]
\item $C=\Delta^1\times \Delta^1=\left\{\begin{matrix}0,0&\to&1,0\\
\downarrow&&\downarrow\\
0,1&\to&1,1\end{matrix}\right\}$, i.e.,  every diagram is a commutative square, and 
\[ \Omega^{\bullet}(\Delta^1\times \Delta^1,\K)=\Omega^{\bullet}(\Delta^1,\K)\otimes \Omega^{\bullet}(\Delta^1,\K)\,.\]  

\item $\sM=\mathbf{DG}$ is the model category of differential graded vector spaces.

\item $\sM=\mathbf{DGLA}$ is the model category of differential graded Lie algebras.
\end{enumerate}

Weak equivalences and fibrations in the categories $\mathbf{DG},\mathbf{DGLA}$ are quasi-isomorphisms and surjective maps, respectively. Moreover, in the category $\mathbf{DG}$ the cofibrations are the injective maps. Therefore, unwinding 
Remark~\ref{rem.reedy} we see for instance that, in the inverse Reedy model structure,  a fibrant object $\mathfrak{g}_\bullet$ in $Fun(\Delta^1,\mathbf{DGLA})$ is a surjective DG-Lie algebra morphism $\mathfrak{g}_0\to \mathfrak{g}_1$, and a  
fibrant object $\mathfrak{g}_{\bullet,\bullet}$ in $Fun(\Delta^1\times \Delta^1,\mathbf{DGLA})$ is a commutative square
\[ \xymatrix{\mathfrak{g}_{0,0}\ar[d]\ar[r]&\mathfrak{g}_{1,0}\ar[d]\\
\mathfrak{g}_{0,1}\ar[r]&\mathfrak{g}_{1,1}}\]
such that $\mathfrak{g}_{0,1}\to\mathfrak{g}_{1,1}$, $\mathfrak{g}_{1,0}\to\mathfrak{g}_{1,1}$, and $\mathfrak{g}_{0,0}\to\mathfrak{g}_{0,1}\times_{\mathfrak{g}_{1,1}}\mathfrak{g}_{1,0}$ are surjective DG-Lie algebra morphisms.

\begin{notation}
As a matter of notation, we will write $\mathfrak{g}\xrightarrow{\sim}\mathfrak{h}$ and $\mathfrak{g}\twoheadrightarrow \mathfrak{h}$ to denote a weak equivalence and a fibration in $\mathbf{DGLA}$, respectively. The same notation will be used also for weak equivalences and fibrations in $\mathbf{DG}$.
\end{notation}

\bigskip
\section{Homotopy fibers of DG-Lie algebras}

It is well known, and easy to prove, that the model category  of differential graded Lie algebras is right proper, i.e., the pullback of a quasi-isomorphism along a fibration is again a quasi-isomorphism. Equivalently, the following holds, cf. \cite[13.1.2 and 13.3.4]{Hir03}:
\begin{lemma}[The coglueing theorem]\label{lem.coglueing}
For any commutative diagram of differential graded Lie algebras of the form
\[ \xymatrix{\mathfrak{g}\ar@{->>}[r]\ar[d]
_{\wr}&\mathfrak{h}\ar[d]
_{\wr}&\mathfrak{l}\ar[l]\ar[d]
_{\wr}\\
\tilde{\mathfrak{g}}\ar@{->>}[r]&\tilde{\mathfrak{h}}&\tilde{\mathfrak{l}}\ar[l]}\]
the induced map 
\[ \mathfrak{g}\times_{\mathfrak{h}}\mathfrak{l}\to \tilde{\mathfrak{g}}\times_{\tilde{\mathfrak{h}}}\tilde{\mathfrak{l}}\]
is a weak equivalence.
\end{lemma}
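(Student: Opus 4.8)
The plan is to reduce the statement to a computation with the underlying cochain complexes and then run a Mayer--Vietoris argument together with the five lemma. The forgetful functor $\mathbf{DGLA}\to\mathbf{DG}$ preserves all limits, hence carries the two fibre products in the statement to the corresponding pullbacks of complexes; moreover a morphism of DG-Lie algebras is a weak equivalence precisely when its underlying morphism of complexes is a quasi-isomorphism. It therefore suffices to prove the analogous statement for the underlying diagram of complexes, forgetting the Lie structure entirely.

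Next I would exploit that the left-hand horizontal arrows are fibrations, i.e.\ surjections. Writing $f\colon\mathfrak{g}\twoheadrightarrow\mathfrak{h}$ and $g\colon\mathfrak{l}\to\mathfrak{h}$ for the two maps into $\mathfrak{h}$, the fibre product $\mathfrak{g}\times_{\mathfrak{h}}\mathfrak{l}$ sits in a short exact sequence of complexes
\[
0\to \mathfrak{g}\times_{\mathfrak{h}}\mathfrak{l}\to \mathfrak{g}\oplus\mathfrak{l}\xrightarrow{\ (x,y)\mapsto f(x)-g(y)\ }\mathfrak{h}\to 0,
\]
where surjectivity of the right-hand map is exactly where the hypothesis that $f$ is a fibration enters: given $h\in\mathfrak{h}$, one chooses a preimage $x$ of $h$ under $f$ and takes $y=0$. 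The analogous sequence holds for the bottom row, with $\tilde f$ surjective, and the three vertical quasi-isomorphisms of the hypothesis assemble into a morphism between the two short exact sequences.

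Passing to cohomology then yields a morphism of the associated long (Mayer--Vietoris) exact sequences, commuting with the connecting homomorphisms by naturality of the snake lemma. On the terms $H^n(\mathfrak{g})\oplus H^n(\mathfrak{l})$ and $H^n(\mathfrak{h})$ the induced maps are isomorphisms, since $\mathfrak{g}\xrightarrow{\sim}\tilde{\mathfrak{g}}$, $\mathfrak{l}\xrightarrow{\sim}\tilde{\mathfrak{l}}$ and $\mathfrak{h}\xrightarrow{\sim}\tilde{\mathfrak{h}}$ are quasi-isomorphisms. The five lemma then forces $H^n(\mathfrak{g}\times_{\mathfrak{h}}\mathfrak{l})\to H^n(\tilde{\mathfrak{g}}\times_{\tilde{\mathfrak{h}}}\tilde{\mathfrak{l}})$ to be an isomorphism in every degree, which is precisely the assertion that the induced map of fibre products is a weak equivalence.

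The only genuinely delicate point is the naturality of the whole construction: one must verify that the two short exact sequences really do fit into a commutative ladder compatible with the connecting maps, so that the five lemma is applicable. This is routine but necessary, and is the step I would write out with some care; everything else is formal. Alternatively one could avoid the explicit Mayer--Vietoris sequence and deduce the claim directly from right properness of $\mathbf{DGLA}$ via the standard model-categorical coglueing lemma \cite[13.3.4]{Hir03}, the argument above being simply its concrete chain-level incarnation.
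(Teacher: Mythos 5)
Your proof is correct, but it takes a genuinely different route from the paper, which in fact offers no written argument at all: the lemma is presented as the diagrammatic reformulation of right properness of $\mathbf{DGLA}$ and is justified by a citation to \cite[13.1.2 and 13.3.4]{Hir03}, i.e., to the general model-categorical coglueing theorem --- precisely the alternative you mention in your closing sentence. Your argument is instead a self-contained chain-level verification: reduce along the forgetful functor $\mathbf{DGLA}\to\mathbf{DG}$ (legitimate, since it preserves limits and detects weak equivalences), embed each fibre product in the short exact sequence
\[
0\to \mathfrak{g}\times_{\mathfrak{h}}\mathfrak{l}\to \mathfrak{g}\oplus\mathfrak{l}\to \mathfrak{h}\to 0,
\]
whose right-exactness is exactly where the fibration hypothesis enters, and conclude via the commutative ladder of Mayer--Vietoris long exact sequences and the five lemma. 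What your approach buys is transparency and self-containedness: it isolates where surjectivity is used, needs nothing beyond elementary homological algebra, and is, in substance, the proof of right properness of $\mathbf{DGLA}$ that the paper leaves implicit. What the paper's approach buys is brevity and generality: quoting the coglueing theorem makes the lemma an instance of a standard fact valid in any right proper model category, at the cost of outsourcing the verification to the literature. The one step you rightly flag --- commutativity of the ladder, hence applicability of the five lemma --- is indeed routine: both squares commute directly from the commutativity of the given diagram, and naturality of the connecting homomorphism is standard.
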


Right properness allows us to define the homotopy fiber of a morphism $\mathfrak{g}_0\to \mathfrak{g}_1$ of DG-Lie algebras in the following easy way: take any commutative diagram of the form
\[ \xymatrix{\mathfrak{g}_0\ar[d]_{\wr}\ar[r]&\mathfrak{g}_1\ar[d]_{\wr}\\
\tilde{\mathfrak{g}}_0\ar@{->>}[r]&\tilde{\mathfrak{g}}_1}\]
extending $\mathfrak{g}_0\to \mathfrak{g}_1$, then define 
\[ hofib(\mathfrak{g}_\bullet)=\ker(\tilde{\mathfrak{g}}_0\twoheadrightarrow \tilde{\mathfrak{g}}_1)=\tilde{\mathfrak{g}}_0\times_{\tilde{\mathfrak{g}}_1} 0\;,\]
as an element of the homotopy category: by the coglueing theorem the quasi-isomorphism class of
$hofib(\mathfrak{g}_\bullet)$ is well defined. 

The underlying complex of $hofib(\mathfrak{g}_\bullet)$ is also a representative for the homotopy fiber of $\mathfrak{g}_\bullet$ in the model category of DG-vector spaces and so it is quasi-isomorphic, as a cochain complex, to the mapping cocone of $\mathfrak{g}_\bullet$ as a morphism of DG-vector spaces. In particular 
we have a long exact cohomology sequence
\[ \cdots \to H^i(hofib(\mathfrak{g}_\bullet))\to H^i(\mathfrak{g}_0)\xrightarrow{}H^i(\mathfrak{g}_1)\to H^{i+1}(hofib(\mathfrak{g}_\bullet))\to\cdots\;.\]

The above model for the homotopy fiber of a morphism of DG-Lie algebras has the unpleasant feature of depending on a noncanonical choice of the fibration $\tilde{\mathfrak{g}}_0\twoheadrightarrow \tilde{\mathfrak{g}}_1$. As a consequence of this, the above construction of $hofib(\mathfrak{g}_\bullet)$ is not functorial. However, we can remedy this, by considering the following  choice for $\tilde{\mathfrak{g}}_0\twoheadrightarrow \tilde{\mathfrak{g}}_1$, leading to a functorial representative for the homotopy fiber, which we will call 
the \emph{Thom-Whitney fiber}. Every morphism $\mathfrak{g}_\bullet=(\mathfrak{g}_0\xrightarrow{f} \mathfrak{g}_1)$ 
of differential graded Lie algebras extends to a commutative diagram
\begin{equation}\label{equ.canonicalfactorization}
\xymatrix{\mathfrak{g}_0\ar@(u,u)[rr]^f\ar@{^{ (}->}[r]_-i\ar@(d,l)[dr]^{\Id_{\mathfrak{g}_0}}&\mathfrak{g}_0\times ^{\vert_1}_{\mathfrak{g}_1} \Omega^\bullet(\Delta^1,\mathfrak{g}_1)\ar@{->>}[r]_-{\vert_0}\ar[d]^{\pi}&\,\mathfrak{g}_1\\
&\,\mathfrak{g}_0&}
\end{equation}
where $\vert_0,\vert_1\colon \Omega^\bullet(\Delta^1,\mathfrak{g}_1)\to \mathfrak{g}_1$ are the pullback maps along the two face maps $\Delta^0\to \Delta^1$, $\pi$ is the projection on the first factor and  $i=(\mathrm{Id}_{\mathfrak{g}_0},f)$. Notice that  $i$ is the right inverse of the surjective quasi-isomorphism $\pi$ and so $i$ is an injective 
quasi-isomorphism, while $\vert_0$ is surjective. This means that the kernel of $\vert_0$ is a  model for  $hofib(\mathfrak{g}_\bullet)$.
\begin{definition} For every morphism $\mathfrak{g}_\bullet=\{\mathfrak{g}_0\to \mathfrak{g}_1\}$ of DG-Lie algebras,  we denote 
\begin{align*}
 TW(\mathfrak{g}_\bullet)&:=\ker \left(\vert_0\colon \mathfrak{g}_0\times ^{\vert_1}_{\mathfrak{g}_1} \Omega^\bullet(\Delta^1,\mathfrak{g}_1)\to \mathfrak{g}_1\right)\\
 &= \{(\omega_0,\omega_1)\in \mathfrak{g}_0\times \Omega^\bullet(\Delta^1,\mathfrak{g}_1)\mid \omega_1\vert_0=0,\;  \omega_1\vert_1=f(\omega_0)\}
 \end{align*}
and call it  the \emph{Thom-Whitney model} for the homotopy fiber of $\mathfrak{g}_\bullet$.
\end{definition}

The above diagram \eqref{equ.canonicalfactorization} shows in particular  that for every  quasi-isomorphism $\mathfrak{g}_0\xrightarrow{\sim} \mathfrak{g}_1$ of DG-Lie algebras, there exists a DG-Lie algebra $\mathfrak{h}$ and a span of acyclic fibrations $\mathfrak{g}_0\stackrel{\sim}{\twoheadleftarrow} \mathfrak{h}\stackrel{\sim}{\twoheadrightarrow} \mathfrak{g}_1$.
By right properness, the same is true as soon as $\mathfrak{g}_0,\mathfrak{g}_1$ are homotopy equivalent differential graded Lie algebras.

\begin{remark} Via the isomorphism $\K[t,dt]\to \Omega^\bullet(\Delta^1,\K)$, $t\mapsto t_1$, the 
Thom-Whitney homotopy fiber of a morphism $f\colon \mathfrak{g}_0\to \mathfrak{g}_1$ becomes
\[TW(\mathfrak{g}_\bullet)=\{(x,y(t))\in \mathfrak{g}_0\times\mathfrak{g}_1[t,dt]\mid y(0)=0,\; y(1)=f(x)\}\,.\]
Notice that for a terminal morphism $\mathfrak{g}\to 0$ one has $TW(\mathfrak{g}\to 0)=\mathfrak{g}$.
\end{remark}

\begin{lemma}\label{lemma.tw-quotient}
Assume that $\mathfrak{g}_\bullet=\{\mathfrak{g}_0\hookrightarrow \mathfrak{g}_1\}$ is the inclusion of a DG-Lie subalgebra  $\mathfrak{g}_0$ into a DG-Lie algebra 
$\mathfrak{g}_1$. Then we have a natural quasi-isomorphism of cochain complexes $TW(\mathfrak{g}_\bullet)\to (\mathfrak{g}_1/\mathfrak{g}_0)[-1]$ given by 
\[
(\omega_0,\omega_1)\mapsto \int_{\Delta^1} \omega_1 \mod \mathfrak{g}_0, 
\]
which, if $\mathfrak{g}_1$ has trivial bracket, is also a quasi-isomorphism of differential graded Lie algebras.
If there exists a morphism of DG-Lie algebras $\mathfrak{g}_1\to \mathfrak{g}_0$ which is a left inverse to the inclusion of $\mathfrak{g}_0$ in $\mathfrak{g}_1$, then 
the map
\[ \ker(\mathfrak{g}_1\to \mathfrak{g}_0)[-1]\to TW(\mathfrak{g}_\bullet),\qquad x_1\mapsto  dt\, x_1,\]
is a quasi-isomorphism of DG-Lie algebras, where $\ker(\mathfrak{g}_1\to \mathfrak{g}_0)[-1]$ is equipped with 
the trivial bracket.
\end{lemma}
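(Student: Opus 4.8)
The plan is to first put $TW(\mathfrak{g}_\bullet)$ into a more manageable form. Since $f$ is the inclusion, the defining conditions $\omega_1\vert_0=0$, $\omega_1\vert_1=f(\omega_0)$ force $\omega_0=\omega_1\vert_1$, so the projection onto the second factor identifies $TW(\mathfrak{g}_\bullet)$ with the subcomplex
\[ T=\{\omega\in\Omega^\bullet(\Delta^1,\mathfrak{g}_1)\mid \omega\vert_0=0,\ \omega\vert_1\in\mathfrak{g}_0\}\]
carrying the differential $D$ and bracket inherited from $\Omega^\bullet(\Delta^1,\mathfrak{g}_1)$; that $T$ is a sub-DG-Lie algebra uses that $\mathfrak{g}_0$ is a subalgebra and that $dt\vert_0=dt\vert_1=0$. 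Writing $\omega=a(t)+b(t)\,dt$ with $a,b\in\mathfrak{g}_1[t]$, I would compute $D$ explicitly and integrate by parts to obtain the Stokes-type identity $\int_{\Delta^1}D\omega=\omega\vert_1-\omega\vert_0-d\int_{\Delta^1}\omega$. For $\omega\in T$ the boundary terms satisfy $\omega\vert_0=0$ and $\omega\vert_1\in\mathfrak{g}_0$, so modulo $\mathfrak{g}_0$ this reads $\int_{\Delta^1}D\omega\equiv-\,d\int_{\Delta^1}\omega$, which is exactly the assertion that $\Phi(\omega)=\int_{\Delta^1}\omega\bmod\mathfrak{g}_0$ is a chain map $T\to(\mathfrak{g}_1/\mathfrak{g}_0)[-1]$, the sign being absorbed by the differential of the shift. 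Naturality in the pair $(\mathfrak{g}_0,\mathfrak{g}_1)$ is clear from the construction.

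To prove $\Phi$ is a quasi-isomorphism I would enlarge $T$ to $A=\{\omega\in\Omega^\bullet(\Delta^1,\mathfrak{g}_1)\mid\omega\vert_0=0\}$. Since $A\cong\Omega_0\otimes\mathfrak{g}_1$ with $\Omega_0=\{\eta\in\Omega^\bullet(\Delta^1,\K)\mid\eta\vert_0=0\}$, and $\Omega_0$ is acyclic (every polynomial $1$-form has a unique primitive vanishing at $0$, and the only constant vanishing at $0$ is $0$), the Künneth formula shows $A$ is acyclic. The map $\omega\mapsto\omega\vert_1\bmod\mathfrak{g}_0$ is a surjection $A\to\mathfrak{g}_1/\mathfrak{g}_0$ with kernel exactly $T$, giving a short exact sequence of complexes $0\to T\to A\to\mathfrak{g}_1/\mathfrak{g}_0\to0$. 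As $A$ is acyclic, the connecting homomorphism $\partial\colon H^i(\mathfrak{g}_1/\mathfrak{g}_0)\xrightarrow{\ \sim\ }H^{i+1}(T)$ is an isomorphism, and it suffices to check that $\Phi$ realizes its inverse. Indeed, lifting a cocycle representing $[x]\in H^i(\mathfrak{g}_1/\mathfrak{g}_0)$ to $t\,x\in A$ and applying $D$ produces the representative $t\,dx+dt\,x$ of $\partial[x]$, on which $\Phi$ evaluates to $x\bmod\mathfrak{g}_0$; hence $\Phi^*\circ\partial=\pm\operatorname{id}$, so $\Phi^*$ is an isomorphism and $\Phi$ is a quasi-isomorphism. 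I expect the careful sign bookkeeping in the Stokes identity and the explicit identification of $\partial$ to be the main obstacle; the rest is formal.

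For the last two assertions, I would argue as follows. If $\mathfrak{g}_1$ has trivial bracket then so does its subalgebra $\mathfrak{g}_0$, hence also $\Omega^\bullet(\Delta^1,\mathfrak{g}_1)$ and its subalgebra $T\cong TW(\mathfrak{g}_\bullet)$, as well as the quotient $(\mathfrak{g}_1/\mathfrak{g}_0)[-1]$; the $\K$-linear chain map $\Phi$ is then automatically a morphism of DG-Lie algebras. For the final statement, let $s\colon\mathfrak{g}_1\to\mathfrak{g}_0$ be the given left inverse and set $K=\ker(s)$, so that $\mathfrak{g}_1=\mathfrak{g}_0\oplus K$ as complexes. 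The map $\psi\colon x\mapsto dt\,x$ sends $K$ into $T$ as the pair $(0,dt\,x)$ (the $\mathfrak{g}_0$-component is forced to vanish because $(dt\,x)\vert_1=0$ and $f$ is injective), and the identity $D(dt\,x)=-dt\,dx$ shows $\psi\colon K[-1]\to T$ is a chain map. Since $dt\wedge dt=0$ we have $[dt\,x,dt\,y]=0$, so $\psi$ is a morphism of DG-Lie algebras out of $K[-1]$ equipped with the trivial bracket. Finally $\Phi\circ\psi$ sends $x\mapsto\int_{\Delta^1}(dt\,x)\bmod\mathfrak{g}_0=x\bmod\mathfrak{g}_0$, which is exactly the isomorphism of complexes $K[-1]\xrightarrow{\ \sim\ }(\mathfrak{g}_1/\mathfrak{g}_0)[-1]$ induced by the splitting $\mathfrak{g}_1=\mathfrak{g}_0\oplus K$; as $\Phi$ is a quasi-isomorphism, the two-out-of-three property forces $\psi$ to be one as well.
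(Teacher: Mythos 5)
Your proof is correct. Note that the paper offers no in-line argument for this lemma at all: its ``proof'' is the single sentence ``Straightforward, see e.g.\ \cite[Sec. 3]{FMcone}'', so your write-up supplies a complete, self-contained argument where the authors delegate to a reference. Your route --- identify $TW(\mathfrak{g}_\bullet)$ via the second projection with the subalgebra $T\subseteq\Omega^\bullet(\Delta^1,\mathfrak{g}_1)$ of forms vanishing at $0$ and landing in $\mathfrak{g}_0$ at $1$, embed $T$ into the acyclic complex $A$ of forms merely vanishing at $0$, and extract the quasi-isomorphism from the connecting homomorphism of the short exact sequence $0\to T\to A\to\mathfrak{g}_1/\mathfrak{g}_0\to 0$ --- is a clean piece of homological algebra that avoids constructing any explicit contraction. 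Two points corroborate that your details are right: the Stokes identity $\int_{\Delta^1}D\omega=\omega\vert_1-\omega\vert_0-d\int_{\Delta^1}\omega$ holds on the nose with the paper's conventions (so the sign bookkeeping you flag as the main obstacle does work out), and your formula for $\partial^{-1}$, namely $[x]\mapsto[t\,dx+dt\,x]=[d(tx)]$, is precisely the inverse map the paper records in Remark~\ref{rem.representative}, which is an independent consistency check. The trivial-bracket case and the split case (where $\Phi\circ\psi$ is the isomorphism $K[-1]\cong(\mathfrak{g}_1/\mathfrak{g}_0)[-1]$ induced by $\mathfrak{g}_1=\mathfrak{g}_0\oplus K$, and two-out-of-three finishes) are also handled correctly.
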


\begin{proof} Straightforward, see e.g. \cite[Sec. 3]{FMcone}.
\end{proof}

\begin{remark}\label{rem.representative}
For later use, we point out that, if $\mathfrak{g}_0\hookrightarrow \mathfrak{g}_1$ is the inclusion of a DG-Lie subalgebra, the inverse of the isomorphism $H^*(TW(\mathfrak{g}_\bullet))\to H^{*-1}(\mathfrak{g}_1/\mathfrak{g}_0)$ induced by the quasi-isomorphism of complexes described in  Lemma \ref{lemma.tw-quotient} is given by
\begin{align*}
H^{*-1}(\mathfrak{g}_1/\mathfrak{g}_0)&\to H^*(TW(\mathfrak{g}_\bullet))\\
[x_1]&\mapsto [(d \tilde{x_1},d(t\,\tilde{x}_1))],
\end{align*}
where $\tilde{x}_1$ is any representative of $[x_1]$ in $\mathfrak{g}_1$.
\end{remark}

\begin{corollary}\label{loop} 
Let $\mathfrak{g}$ be a  differential graded Lie algebra and let 
$\Omega \mathfrak{g}:=TW(0\hookrightarrow \mathfrak{g})$ be the based loop space of $\mathfrak{g}$. 
Then  $\Omega \mathfrak{g}$ is homotopy abelian and quasi-isomorphic to the cochain complex $\mathfrak{g}[-1]$ endowed with the trivial bracket.
\end{corollary}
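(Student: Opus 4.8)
\section*{Proof proposal}

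The plan is to deduce the corollary directly as the special case $\mathfrak{g}_0=0$, $\mathfrak{g}_1=\mathfrak{g}$ of Lemma~\ref{lemma.tw-quotient}. First I would observe that $0\hookrightarrow\mathfrak{g}$ is the inclusion of a DG-Lie subalgebra (namely the trivial one), so that both halves of Lemma~\ref{lemma.tw-quotient} apply, and that by definition $\Omega\mathfrak{g}=TW(0\hookrightarrow\mathfrak{g})$ is precisely the object whose homotopy type the lemma computes. Concretely, using the description recalled in the preceding remark, $\Omega\mathfrak{g}=\{y(t)\in\mathfrak{g}[t,dt]\mid y(0)=0,\ y(1)=0\}$.

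The key step is to invoke the \emph{second} part of Lemma~\ref{lemma.tw-quotient} rather than the first. The first part only yields a quasi-isomorphism of cochain complexes $\Omega\mathfrak{g}\to(\mathfrak{g}/0)[-1]=\mathfrak{g}[-1]$, and this upgrades to a quasi-isomorphism of DG-Lie algebras only when $\mathfrak{g}$ already has trivial bracket, which is not assumed here. Instead I would note that the unique map $\mathfrak{g}\to 0$ is trivially a DG-Lie left inverse to $0\hookrightarrow\mathfrak{g}$, and that $\ker(\mathfrak{g}\to 0)=\mathfrak{g}$. The second part of the lemma then asserts that
\[
\mathfrak{g}[-1]\to\Omega\mathfrak{g},\qquad x\mapsto dt\,x,
\]
is a quasi-isomorphism of DG-Lie algebras, where the source $\mathfrak{g}[-1]$ carries the trivial bracket. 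From this both assertions follow at once: $\mathfrak{g}[-1]$ with trivial bracket is an abelian DG-Lie algebra, so $\Omega\mathfrak{g}$, being joined to it by a single quasi-isomorphism of DG-Lie algebras, is homotopy abelian; and the very same map realizes the asserted quasi-isomorphism with $\mathfrak{g}[-1]$.

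I do not expect a genuine obstacle, since the content is already packaged into Lemma~\ref{lemma.tw-quotient}; the only points needing care, and which are absorbed into that lemma, are that $dt\,x$ indeed lies in $\Omega\mathfrak{g}$ (its restrictions along both face maps $\vert_0,\vert_1$ vanish, because $dt$ restricts to $0$ on each vertex $\Delta^0\to\Delta^1$) and that the map respects brackets, since for $x,y\in\mathfrak{g}[-1]$ one has $[dt\,x,\,dt\,y]=\pm[x,y]\otimes(dt\wedge dt)=0$, matching the trivial bracket on the source. Were one to demand a self-contained argument bypassing the lemma, the one computation carrying real content would be the acyclicity of the cokernel of $x\mapsto dt\,x$ inside $\Omega\mathfrak{g}$, i.e.\ the statement that every polynomial path in $\mathfrak{g}[t,dt]$ vanishing at both endpoints is, modulo a coboundary, of the form $dt\,x$; this follows from the explicit integration formula for $\int_{\Delta^1}$ together with the standard contracting homotopy on $\Omega^\bullet(\Delta^1,\K)$.
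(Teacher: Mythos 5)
Your proof is correct and follows exactly the paper's own route: the paper's proof of Corollary~\ref{loop} likewise consists of applying the second part of Lemma~\ref{lemma.tw-quotient}, observing that $\mathfrak{g}\to 0$ is a DG-Lie left inverse to $0\hookrightarrow\mathfrak{g}$, so that $x\mapsto dt\,x$ gives a DG-Lie quasi-isomorphism from $\mathfrak{g}[-1]$ with trivial bracket to $\Omega\mathfrak{g}$. Your additional verifications (endpoint vanishing of $dt\,x$, vanishing of the bracket via $dt\wedge dt=0$) are correct but already absorbed into that lemma, exactly as you note.
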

\begin{proof} Immediate from Lemma~\ref{lemma.tw-quotient}, since $\mathfrak{g}\to 0$ is a left inverse to $0\hookrightarrow\mathfrak{g}$.
.\end{proof}

\begin{lemma}\label{prop.iniettivoincohomologiaversohomotopyabelian} 
Let $\mathfrak{g}_\bullet=\{\mathfrak{g}_0\xrightarrow{f}\mathfrak{g}_1\}$ be a morphism of differential graded Lie algebras:
\begin{enumerate}

\item If $f$ is injective in cohomology and $\mathfrak{g}_1$  is quasi-isomorphic 
to an abelian DG-Lie algebra, then also $\mathfrak{g}_0$  is  quasi-isomorphic 
to an abelian DG-Lie algebra.

\item If $f$ is surjective in cohomology and $\mathfrak{g}_0$  is quasi-isomorphic 
to an abelian DG-Lie algebra, then also  $\mathfrak{g}_1$ is 
 quasi-isomorphic 
to an abelian DG-Lie algebra.
\end{enumerate}
\end{lemma}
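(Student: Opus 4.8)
The plan is to translate the whole statement into a computation on the minimal $L_\infty$-models of $\mathfrak{g}_0$ and $\mathfrak{g}_1$. By the homotopy transfer theorem each DG-Lie algebra $\mathfrak{g}_i$ is $L_\infty$-quasi-isomorphic to a minimal $L_\infty$-structure $(H^*(\mathfrak{g}_i),\{\ell^{(i)}_n\}_{n\ge 2})$ on its cohomology, with $\ell^{(i)}_1=0$; and, as is standard, $\mathfrak{g}_i$ is homotopy abelian precisely when all the higher brackets $\ell^{(i)}_n$ ($n\ge 2$) of this minimal model vanish. I would record this characterization first. The morphism $f$ then induces an $L_\infty$-morphism $F=\{F_k\}$ from $(H^*(\mathfrak{g}_0),\ell^{(0)})$ to $(H^*(\mathfrak{g}_1),\ell^{(1)})$ whose linear component is $F_1=H^*(f)$; thus ``$f$ injective (resp.\ surjective) in cohomology'' means exactly ``$F_1$ injective (resp.\ surjective)''.

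The whole argument then rests on the $L_\infty$-morphism identities relating $F$, $\ell^{(0)}$ and $\ell^{(1)}$. Because both models are minimal, $\ell^{(0)}_1=\ell^{(1)}_1=0$, and the identity collecting the terms in which a total of $n$ arguments are inserted takes the schematic form
\[
\sum_{j\ge 2}\pm\,F_{n-j+1}\bigl(\ell^{(0)}_j(\dots),\dots\bigr)
=\sum_{\substack{p\ge 2\\ k_1+\cdots+k_p=n}}\pm\,\ell^{(1)}_p\bigl(F_{k_1}(\dots),\dots,F_{k_p}(\dots)\bigr),
\]
the left sum running over the ways of feeding $j$ of the $n$ inputs into $\ell^{(0)}_j$, and the right sum over the ways of distributing the $n$ inputs among the entries $F_{k_1},\dots,F_{k_p}$. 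The proof consists in isolating, by induction on $n$, a single surviving term on one side of this identity. The base case $n=2$ is nothing but the functoriality of the graded Lie bracket induced on cohomology; the force of the statement lies in the higher brackets, which is why the $L_\infty$-refinement is needed.

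For part (1) I would prove $\ell^{(0)}_n=0$ by induction on $n\ge 2$. Homotopy abelianity of $\mathfrak{g}_1$ gives $\ell^{(1)}_p=0$ for all $p\ge 2$, so the right-hand side vanishes identically; the inductive hypothesis $\ell^{(0)}_j=0$ for $2\le j<n$ kills every left-hand term except the one with $j=n$, leaving $F_1(\ell^{(0)}_n(x_1,\dots,x_n))=0$, whence $\ell^{(0)}_n=0$ by injectivity of $F_1$. Part (2) is dual: now $\ell^{(0)}_j=0$ for all $j\ge 2$ makes the left-hand side vanish identically, and the inductive hypothesis $\ell^{(1)}_p=0$ for $2\le p<n$ leaves on the right only the term with $p=n$ and all $k_i=1$, namely $\ell^{(1)}_n(F_1x_1,\dots,F_1x_n)=0$; since $F_1$ is surjective the vectors $F_1x_i$ exhaust $H^*(\mathfrak{g}_1)$, so $\ell^{(1)}_n=0$. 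In either case the relevant minimal model has trivial higher brackets, which by the recorded characterization means the corresponding DG-Lie algebra is homotopy abelian.

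The step I expect to be most delicate is the precise bookkeeping in these $L_\infty$-identities: verifying that under each inductive hypothesis exactly one term survives on the appropriate side, and that it carries a nonzero coefficient after all symmetrizations and Koszul signs, so that it is genuinely $F_1\circ\ell^{(0)}_n$ in part (1) and $\ell^{(1)}_n\circ F_1^{\otimes n}$ in part (2). The conceptual ingredients---the homotopy transfer theorem, the existence of the minimal $L_\infty$-model with linear part $H^*(f)$, and the characterization of homotopy abelianity by the vanishing of its brackets---I would simply invoke; granting them, only this sign-and-combinatorics check remains.
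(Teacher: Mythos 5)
Your proof is correct, but it follows a genuinely different route from the paper's --- indeed, essentially the route the authors explicitly decline to take: they remark that a proof ``using homotopy classification of $L_{\infty}$-algebras'' can be found in \cite[Prop. 4.1]{KKP} (cf. \cite[Lemma 1.10]{algebraicBTT}), and then deliberately give ``a more elementary proof based only on the theory of differential graded Lie algebras''. Your argument (homotopy transfer to minimal $L_\infty$-models, the characterization of homotopy abelianity as vanishing of all higher brackets of the minimal model, and an induction on the $L_\infty$-morphism identities driven by injectivity, resp. surjectivity, of $F_1$) is precisely that classification argument, and the induction you sketch does close: in part (1) only $F_1\circ\ell^{(0)}_n$ survives, in part (2) only the symmetrization of $\ell^{(1)}_n\circ F_1^{\otimes n}$, which in characteristic $0$ appears with an invertible coefficient. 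The paper instead stays entirely inside $\mathbf{DGLA}$: homotopy abelianity of $\mathfrak{g}_1$ (resp. $\mathfrak{g}_0$) yields a span of acyclic fibrations $\mathfrak{g}_1\stackrel{\sim}{\twoheadleftarrow}\mathfrak{l}\stackrel{\sim}{\twoheadrightarrow}\mathfrak{h}$ with $\mathfrak{h}$ abelian with trivial differential; one pulls back along $f$ (right properness, Lemma~\ref{lem.coglueing}) and then observes that, since $\mathfrak{h}$ is abelian with zero differential, \emph{any} graded-linear projection $\delta\colon\mathfrak{h}\to V$ (resp. inclusion $i\colon V\to\mathfrak{h}$) is automatically a morphism of DG-Lie algebras, so by plain linear algebra one can choose it to make the composite $\delta\gamma\beta$ (resp. $f\gamma\bar{\imath}$) a quasi-isomorphism, producing the required zigzag directly. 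Comparing the two: your approach generalizes verbatim to $L_\infty$-algebras and morphisms and makes the obstruction content (the higher brackets) explicit, but it leans on heavy standard machinery --- the transfer theorem, uniqueness of minimal models, and rectification of $L_\infty$-quasi-isomorphisms back into zigzags of genuine DGLA quasi-isomorphisms --- plus the sign bookkeeping you defer; note also that the characterization you ``record'' at the outset (minimal model abelian iff all higher brackets vanish) is itself proved by the very same induction as your part (2) with $F_1$ bijective, so it is doing real work rather than being a free ingredient. The paper's proof, by contrast, needs nothing beyond right properness and elementary linear algebra on graded vector spaces, with no signs at all, at the cost of being special to the DG-Lie setting.
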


\begin{proof} This is well known; a proof using homotopy classification of $L_{\infty}$-algebras can be found in \cite[Prop. 4.1]{KKP}, cf. also \cite[Lemma 1.10]{algebraicBTT}. It is however worth to give here 
a more elementary proof based only on the theory of differential graded Lie algebras.

(1) Since $\mathfrak{g}_1$ is homotopy equivalent to an abelian differential graded Lie algebra there exists a diagram of DG-Lie algebras of the form
\[ \xymatrix{&\mathfrak{l}\ar@{->>}[d]^g_{\wr}\ar@{->>}[r]^{\sim}_{\gamma}&\mathfrak{h}\\
\mathfrak{g}_0\ar[r]^f&\mathfrak{g}_1&}\]
with  
$\mathfrak{h}$ abelian with trivial differential. 
Considering the fiber product of $f$ and $g$ we get a new diagram 
\[ \xymatrix{\mathfrak{g}_0\times_{\mathfrak{g}_1} \mathfrak{l}\ar[r]^{\quad\beta}\ar@{->>}[d]^\alpha_{\wr}&\mathfrak{l}\ar@{->>}[r]^{\sim}_{\gamma}\ar@{->>}[d]^g_{\wr}&\mathfrak{h}\\
\mathfrak{g}_0\ar[r]^f&\mathfrak{g}_1&&}.\]
Since $f$ is injective in cohomology, also
$\beta$ injective in cohomology. 
It is now sufficient to consider  a suitable projection $\mathfrak{h}\xrightarrow{\delta} V$ of graded vector spaces 
such that the composition $\delta\gamma\beta$ is a quasi-isomorphism.\par

(2) Since $\mathfrak{g}_0$ is quasi-isomorphic to an abelian differential graded Lie algebra there exists a 
diagram of DG-Lie algebras of the form
\[ \xymatrix{\mathfrak{l}\ar@{->>}[d]_{\wr}^g\ar@{->>}[r]^{\sim}_{\gamma}&\mathfrak{g}_0\ar[r]^f&\mathfrak{g}_1\\
\mathfrak{h}&&}\]
where  
$\mathfrak{h}$ is an abelian DG-Lie algebra with trivial differential. 
Let's choose any morphism   $i\colon V\to \mathfrak{h}$  of graded vector spaces such that the 
composition 
\[ \xymatrix{H^*(V)= V\ar[r]^-{i}&\mathfrak{h}= H^*(\mathfrak{h})\ar[r]^-{\sim}_-{H^*(g)}&H^*(\mathfrak{l})\ar[r]^-{\sim}_-{H^*(\gamma)}&H^*(\mathfrak{g}_0)\ar@{->>}[r]^-{H^*(f)}&H^*(\mathfrak{g}_1)}\]
is an isomorphism. Taking the fiber product of $g$ and $i$  we get  a diagram 
\[ \xymatrix{V\times_{\mathfrak{h}}\mathfrak{l}\ar[r]^{\bar{i}}\ar[d]^{\bar{g}}_{\wr}&\mathfrak{l}\ar@{->>}[d]^g_{\wr}\ar@{->>}[r]^{\sim}_{\gamma}&\mathfrak{g}_0\ar[r]^f&\mathfrak{g}_1\\
V\ar[r]^i&\mathfrak{h}&&}\]
with  
$f\gamma \bar{i}$ a quasi-isomorphism of differential graded Lie algebras.
\end{proof}

\begin{corollary}\label{fiber-quasi-abelian}
Let  $\mathfrak{g}_\bullet=\{\mathfrak{g}_0\xrightarrow{f}\mathfrak{g}_1\}$ be a morphism of differential graded Lie algebras. If $f$ is injective in cohomology, then its homotopy fiber $hofib(\mathfrak{g}_\bullet)$ is homotopy abelian.
\end{corollary}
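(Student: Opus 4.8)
The plan is to reduce the statement to part~(2) of Lemma~\ref{prop.iniettivoincohomologiaversohomotopyabelian} by producing a morphism of DG-Lie algebras
\[
\Omega\mathfrak{g}_1 \longrightarrow hofib(\mathfrak{g}_\bullet)
\]
out of the based loop space of $\mathfrak{g}_1$ which is surjective in cohomology. Since $\Omega\mathfrak{g}_1$ is homotopy abelian by Corollary~\ref{loop}, such a map would immediately force $hofib(\mathfrak{g}_\bullet)$ to be homotopy abelian as well, which is exactly the desired conclusion.

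To construct the map I would use the functoriality of the Thom--Whitney fiber applied to the map of arrows that is $0\to\mathfrak{g}_0$ on sources and the identity $\mathfrak{g}_1=\mathfrak{g}_1$ on targets; concretely, in the $\K[t,dt]$-description it is the inclusion
\[
\{\,y(t)\in\mathfrak{g}_1[t,dt]\mid y(0)=y(1)=0\,\}\hookrightarrow\{\,(x,y(t))\mid y(0)=0,\;y(1)=f(x)\,\},\qquad y(t)\mapsto(0,y(t)),
\]
which is manifestly a DG-Lie morphism, the bracket on $TW$ being componentwise. To see that it is onto in cohomology I would compare the long exact sequences of the two homotopy fibers through the ladder induced by this map of arrows. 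For $\Omega\mathfrak{g}_1=hofib(0\to\mathfrak{g}_1)$ the connecting map $H^i(\mathfrak{g}_1)\to H^{i+1}(\Omega\mathfrak{g}_1)$ is an isomorphism (consistently with $\Omega\mathfrak{g}_1\simeq\mathfrak{g}_1[-1]$), while the sequence for $hofib(\mathfrak{g}_\bullet)$ reads
\[
H^i(\mathfrak{g}_0)\xrightarrow{H^i(f)}H^i(\mathfrak{g}_1)\xrightarrow{\ \partial\ }H^{i+1}(hofib(\mathfrak{g}_\bullet))\to H^{i+1}(\mathfrak{g}_0)\xrightarrow{H^{i+1}(f)}H^{i+1}(\mathfrak{g}_1).
\]
Naturality of the connecting homomorphisms identifies $H^*(\Omega\mathfrak{g}_1\to hofib(\mathfrak{g}_\bullet))$ with $\partial$ up to this isomorphism, so it suffices to show that $\partial$ is surjective. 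The hypothesis that $f$ is injective in cohomology makes the map $H^{i+1}(hofib(\mathfrak{g}_\bullet))\to H^{i+1}(\mathfrak{g}_0)$ vanish, since its image equals $\ker H^{i+1}(f)=0$; hence $\partial$ is surjective by exactness.

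Granting this, Corollary~\ref{loop} together with part~(2) of Lemma~\ref{prop.iniettivoincohomologiaversohomotopyabelian} conclude the argument. The only real work is the bookkeeping in the middle step: one has to set up the ladder of long exact sequences correctly and verify that the naturality square identifies the induced map in cohomology with the connecting homomorphism~$\partial$. Once this identification is in place the injectivity hypothesis enters in a single line, and no further computation is required.
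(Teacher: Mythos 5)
Your proof is correct and takes essentially the same route as the paper: the inclusion $\Omega\mathfrak{g}_1\hookrightarrow TW(\mathfrak{g}_\bullet)$, $y(t)\mapsto(0,y(t))$, is exactly the kernel term of the short exact sequence $0\to\Omega\mathfrak{g}_1\to TW(\mathfrak{g}_\bullet)\to\mathfrak{g}_0\to 0$ that the paper produces by functoriality of $TW$, and both arguments then deduce surjectivity in cohomology from injectivity of $H^*(f)$ and conclude via Corollary~\ref{loop} and part~(2) of Lemma~\ref{prop.iniettivoincohomologiaversohomotopyabelian}. The only divergence is bookkeeping: the paper chases the long exact sequence attached to that short exact sequence of DG-Lie algebras, whereas you invoke the homotopy-fiber long exact sequence together with naturality of connecting homomorphisms---two equivalent ways of organizing the same computation.
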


\begin{proof} 
By functoriality of $TW$, the commutative diagram 
\[ \xymatrix{0\ar[r]&0\ar[r]\ar[d]&\mathfrak{g}_0\ar[r]^{\Id_{\mathfrak{g}_0}}\ar[d]^f&\mathfrak{g}_0\ar[r]\ar[d]&0\\
0\ar[r]&\mathfrak{g}_1\ar[r]^{\Id_{\mathfrak{g}_1}}&\mathfrak{g}_1\ar[r]&0\ar[r]&0}\]
induces a short exact sequence of DG-Lie algebras
\[ 0\to \Omega \mathfrak{g}_1\to TW(\mathfrak{g}_\bullet)\to \mathfrak{g}_0\to 0,\]
and a straightforward diagram chasing gives that 
$f$ is injective in cohomology if and only if 
$\Omega\mathfrak{g}_1\to TW(\mathfrak{g}_\bullet)$ is surjective in cohomology. By Corollary~\ref{loop}, $\Omega \mathfrak{g}_1$ is homotopy abelian, and so 
Lemma~\ref{prop.iniettivoincohomologiaversohomotopyabelian} implies that $TW(\mathfrak{g}_\bullet)$ is homotopy abelian, too.
\end{proof}

\bigskip
\section{The double homotopy fiber of a commutative square of DG-Lie algebras}

We now turn our attention to commutative squares of DG-Lie algebras, i.e., to the category $Fun(\Delta^1\times \Delta^1,\mathbf{DGLA})$.

\begin{definition}\label{def.doublefiber} 
Let
\[ \tilde{\mathfrak{g}}_{\bullet\bullet}:\qquad  \xymatrix{\tilde{\mathfrak{g}}_{00}\ar[d]^{}\ar[r]^{}&\tilde{\mathfrak{g}}_{01}\ar[d]^{}\\
\tilde{\mathfrak{g}}_{10}\ar[r]^{}&\tilde{\mathfrak{g}}_{11}}\]
be a fibrant object in $Fun(\Delta^1\times \Delta^1,\mathbf{DGLA})$. We 
write $fib^{[2]}(\tilde{\mathfrak{g}}_{\bullet\bullet})$, and call it the \emph{double fiber} of $\tilde{\mathfrak{g}}_{\bullet\bullet}$, for the DG-Lie algebra given by 
\[fib^{[2]}(\tilde{\mathfrak{g}}_{\bullet\bullet}):=\ker\left(\tilde{\mathfrak{g}}_{00}\xrightarrow{}\tilde{\mathfrak{g}}_{10}\times_{\tilde{\mathfrak{g}}_{11}}\tilde{\mathfrak{g}}_{01}\right)\,.\]
\end{definition}

\begin{remark}\label{rem.quasi.iso}
It is immediate to see that, in the situation of  Definition~\ref{def.doublefiber}, 
there exist two short exact sequences of DG-Lie algebras
\[ 0\to fib^{[2]}(\tilde{\mathfrak{g}}_{\bullet\bullet})\to 
\ker(\tilde{\mathfrak{g}}_{0,0}\to \tilde{\mathfrak{g}}_{1,0})\to 
\ker(\tilde{\mathfrak{g}}_{1,0}\to \tilde{\mathfrak{g}}_{1,1})\to 0,\]
\[ 0\to fib^{[2]}(\tilde{\mathfrak{g}}_{\bullet\bullet})\to 
\ker(\tilde{\mathfrak{g}}_{0,0}\to \tilde{\mathfrak{g}}_{0,1})\to 
\ker(\tilde{\mathfrak{g}}_{0,1}\to \tilde{\mathfrak{g}}_{1,1})\to 0.\]
In particular, every weak equivalence $\tilde{\mathfrak{g}}_{\bullet\bullet}\to \tilde{\mathfrak{h}}_{\bullet\bullet}$ of fibrant commutative squares induces a quasi-isomorphism 
$fib^{[2]}(\tilde{\mathfrak{g}}_{\bullet\bullet})\to fib^{[2]}(\tilde{\mathfrak{h}}_{\bullet\bullet})$; it is worth to notice that the same fact can be proved in a more abstract 
way by applying twice the coglueing theorem.
\end{remark}

In analogy with the case of morphisms, we define the \emph{double homotopy fiber} of a commutative square 
$\mathfrak{g}_{\bullet\bullet}\colon \Delta^1\times\Delta^1\to  \mathbf{DGLA}$.
\begin{definition}
Let $\mathfrak{g}_{\bullet\bullet}\in Fun(\Delta^1\times\Delta^1,\mathbf{DGLA})$. We set \[
hofib^{[2]}(\mathfrak{g}_{\bullet\bullet}):=fib^{[2]}(\tilde{\mathfrak{g}}_{\bullet\bullet}),
\]
where $\mathfrak{g}_{\bullet\bullet}\to \tilde{\mathfrak{g}}_{\bullet\bullet}$ is a fibrant resolution of $\mathfrak{g}_{\bullet\bullet}$ in the inverse Reedy structure, i.e., a weak equivalence, with $\tilde{\mathfrak{g}}_{\bullet\bullet}$ a fibrant commutative diagram. We call (the homotopy class of) $hofib^{[2]}(\mathfrak{g}_{\bullet\bullet})$ the double homotopy fiber of $\mathfrak{g}_{\bullet\bullet}$.
\end{definition}

The proof that the  double homotopy fiber is well defined in the homotopy category may be done by standard model category methods.  
For later use, we prove this fact by describing a canonical fibrant resolution for every commutative square $\mathfrak{g}_{\bullet\bullet}$.
 
\begin{definition}\label{def.twdoublefiber} 
 Given a commutative square of DG-Lie algebras
\[ \mathfrak{g}_{\bullet\bullet}:\qquad  \xymatrix{\mathfrak{g}_{00}\ar[d]^{}\ar[r]^{}&\mathfrak{g}_{01}\ar[d]^{}\\
\mathfrak{g}_{10}\ar[r]^{}&\mathfrak{g}_{11}}\]
its Thom-Whitney fibrant resolution $\mathfrak{g}_{\bullet\bullet}\xrightarrow{\sim} \mathfrak{g}_{TW;\bullet\bullet}\twoheadrightarrow 0$ is 
\[ \mathfrak{g}_{TW;\bullet\bullet}:\qquad  \xymatrix{\mathfrak{g}_{TW;00}\ar[d]^{}\ar[r]^{}&\mathfrak{g}_{TW;01}\ar[d]^{}\\
\mathfrak{g}_{TW;10}\ar[r]^{}&\mathfrak{g}_{TW;11}}\]
where 
\[
\mathfrak{g}_{TW;11}=\mathfrak{g}_{11};
\qquad
\mathfrak{g}_{TW;01}=\mathfrak{g}_{01}\times^{\vert_1}_{\mathfrak{g}_{11}}\Omega^\bullet(\Delta^1;\mathfrak{g}_{11})
\qquad
\mathfrak{g}_{TW;10}=\mathfrak{g}_{10}\times^{\vert_1}_{\mathfrak{g}_{11}}\Omega^\bullet(\Delta^1;\mathfrak{g}_{11})
\]
and $\mathfrak{g}_{TW;00}$ is given by the  fiber product
\[\mathfrak{g}_{00}\times^{\vert_1\times\vert_1}_{\mathfrak{g}_{01}\times\mathfrak{g}_{10}} \left(\Omega^\bullet(\Delta^1;\mathfrak{g}_{01})\times \Omega^\bullet(\Delta^1;\mathfrak{g}_{10})\right)
\times_{\Omega^\bullet(\Delta^1;\mathfrak{g}_{11})\times \Omega^\bullet(\Delta^1;\mathfrak{g}_{11})}^{\vert_{1\bullet}\times \vert_{\bullet1}} \Omega^\bullet(\Delta^1\times\Delta^1;\mathfrak{g}_{11}),
\]
where
\[
\vert_{1\bullet}, \vert_{\bullet1}\colon \Omega^\bullet(\Delta^1\times\Delta^1;\mathfrak{g}_{11})\to  \Omega^\bullet(\Delta^1;\mathfrak{g}_{11})
\]
are the pullbacks along the inclusions $\Delta^1\cong \{1\}\times \Delta^1\hookrightarrow \Delta^1\times \Delta^1$ and 
 $\Delta^1\cong \Delta^1\times\{1\}\hookrightarrow \Delta^1\times \Delta^1$, respectively.
The morphisms $\mathfrak{g}_{TW;01}\to \mathfrak{g}_{TW;11}$ and $\mathfrak{g}_{TW;10}\to \mathfrak{g}_{TW;11}$ are given by the compositions
\[
\mathfrak{g}_{TW;01}\hookrightarrow \Omega^\bullet(\Delta^1;\mathfrak{g}_{11})\xrightarrow{\vert_0}\mathfrak{g}_{11}
\]
and
\[
\mathfrak{g}_{TW;10}\hookrightarrow \Omega^\bullet(\Delta^1;\mathfrak{g}_{11})\xrightarrow{\vert_0}\mathfrak{g}_{11}
\]
respectively. Finally, the morphisms $\mathfrak{g}_{TW;00}\to \mathfrak{g}_{TW;01}$ and $\mathfrak{g}_{TW;00}\to \mathfrak{g}_{TW;10}$
are given by the compositions
\[
\mathfrak{g}_{TW;00}\hookrightarrow \Omega^\bullet(\Delta^1;\mathfrak{g}_{01})
\times_{\Omega^\bullet(\Delta^1;\mathfrak{g}_{11})}^{\vert_{1\bullet}} \Omega^\bullet(\Delta^1\times\Delta^1;\mathfrak{g}_{11})
\xrightarrow{(\vert_0,\vert_{0\bullet})}\mathfrak{g}_{TW;01}
\]
and
\[
\mathfrak{g}_{TW;00}\hookrightarrow \Omega^\bullet(\Delta^1;\mathfrak{g}_{10})
\times_{\Omega^\bullet(\Delta^1;\mathfrak{g}_{11})}^{\vert_{\bullet1}} \Omega^\bullet(\Delta^1\times\Delta^1;\mathfrak{g}_{11})
\xrightarrow{(\vert_0,\vert_{\bullet0})}\mathfrak{g}_{TW;10}
\]
The morphism $\mathfrak{g}_{\bullet\bullet}\xrightarrow{\sim} \mathfrak{g}_{TW;\bullet\bullet}$ is the obvious componentwise inclusion.
\end{definition}

\begin{definition}
We shall denote by $TW^{[2]}(\mathfrak{g}_{\bullet\bullet}):=fib^{[2]}(\mathfrak{g}_{TW;\bullet\bullet})$ the double fiber of the Thom-Whitney fibrant resolution, and will call it the Thom-Whitney model for the double homotopy fiber of $\mathfrak{g}_{\bullet\bullet}$.
\end{definition} 

\begin{remark}
By unwinding the definition, one sees that, if $\mathfrak{g}_{\bullet\bullet}$ is the commutative square
\[ \xymatrix{\mathfrak{g}_{00}\ar[d]^{v_0}\ar[r]^{h_0}&\mathfrak{g}_{01}\ar[d]^{v_1}\\
\mathfrak{g}_{10}\ar[r]^{h_1}&\mathfrak{g}_{11}}\]
of DG-Lie algebras, then  $TW^{[2]}(\mathfrak{g}_{\bullet\bullet})$ is the DG-Lie subalgebra  of 
\[\mathfrak{g}_{00}\times \Omega^\bullet(\Delta^1;\mathfrak{g}_{10})\times \Omega^\bullet(\Delta^1;\mathfrak{g}_{01})\times \Omega^\bullet(\Delta^1\times\Delta^1;\mathfrak{g}_{11})\] 
consisting of those quadruples $(\omega_{00},\omega_{01},\omega_{10},\omega_{11})$ such that
\[
\begin{cases}
\omega_{01}\bigr\vert_{0}=0;\qquad \omega_{01}\bigr\vert_{1}=h_0(\omega_{00})\\
\omega_{10}\bigr\vert_{0}=0;\qquad \omega_{10}\bigr\vert_{1}=v_0(\omega_{00})\\
\omega_{11}\bigr\vert_{0\bullet}=0;\qquad \omega_{11}\bigr\vert_{1\bullet}=h_1(\omega_{10})\\
\omega_{11}\bigr\vert_{\bullet0}=0;\qquad \omega_{11}\bigr\vert_{\bullet1}=v_1(\omega_{01})
\end{cases}
\]
\end{remark} 

\begin{remark}\label{rem.doppiafibra}
It is straightforward to observe that there exists a natural isomorphism 
\[TW^{[2]}(\mathfrak{g}_{\bullet\bullet})\cong TW\left(TW(\mathfrak{g}_{\bullet0})\to TW(\mathfrak{g}_{\bullet1})\right)\cong TW\left(TW(\mathfrak{g}_{0\bullet})\to TW(\mathfrak{g}_{1\bullet})\right),\]
where 
\[TW(\mathfrak{g}_{\bullet0})\to TW(\mathfrak{g}_{\bullet1}),\qquad TW(\mathfrak{g}_{0\bullet})\to TW(\mathfrak{g}_{1\bullet})\]
are induced by the commutative square $\mathfrak{g}_{\bullet\bullet}$ via the functoriality of the Thom-Whitney model for homotopy fibers.
\end{remark}

\begin{lemma}\label{to-be-used-immediately}
Let $\tilde{\mathfrak{g}}_{\bullet\bullet}$ be a fibrant commutative square of DG-Lie algebras. 
There exists a natural  quasi-isomorphism of differential graded Lie algebras
\[ fib^{[2]}(\tilde{\mathfrak{g}}_{\bullet\bullet})\to TW^{[2]}(\tilde{\mathfrak{g}}_{\bullet\bullet}),\qquad \tilde{\omega}_{00}\mapsto (\tilde{\omega}_{00},0,0,0),\]
\end{lemma}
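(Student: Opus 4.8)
The plan is to reduce the statement to a one-variable comparison result and then to iterate it using the description of $TW^{[2]}$ as an iterated Thom–Whitney fiber in Remark~\ref{rem.doppiafibra}. The one-variable statement I would first establish is: \emph{for every surjective morphism $f\colon\mathfrak a\twoheadrightarrow\mathfrak b$ of DG-Lie algebras, the natural map $\ker f\to TW(f)$, $x\mapsto(x,0)$, is a quasi-isomorphism.} To prove it, write $P\mathfrak b=\{\omega\in\Omega^\bullet(\Delta^1;\mathfrak b)\mid\omega\vert_0=0\}$ for the based path object; the short exact sequence $0\to P\mathfrak b\to\Omega^\bullet(\Delta^1;\mathfrak b)\xrightarrow{\vert_0}\mathfrak b\to 0$, together with the fact that $\vert_0$ is a surjective quasi-isomorphism, shows that $P\mathfrak b$ is acyclic, so that $0\xrightarrow{\sim}P\mathfrak b$. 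Since $\ker f=\mathfrak a\times_{\mathfrak b}0$ and $TW(f)=\mathfrak a\times^{\vert_1}_{\mathfrak b}P\mathfrak b$, the coglueing theorem (Lemma~\ref{lem.coglueing}), applied to the identity of the fibration $f\colon\mathfrak a\twoheadrightarrow\mathfrak b$ and to the weak equivalence $0\xrightarrow{\sim}P\mathfrak b$ over $\mathfrak b$, yields exactly a quasi-isomorphism $\ker f\to TW(f)$, and inspecting the induced map of pullbacks shows that it is $x\mapsto(x,0)$.

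Next I would exploit fibrancy of $\tilde{\mathfrak g}_{\bullet\bullet}$ to arrange the iteration. Writing $v_0,v_1$ for the vertical maps and $h_0,h_1$ for the horizontal ones, fibrancy in the inverse Reedy structure makes $v_1$, $h_1$ and $\tilde{\mathfrak g}_{00}\to\tilde{\mathfrak g}_{01}\times_{\tilde{\mathfrak g}_{11}}\tilde{\mathfrak g}_{10}$ surjective; composing the latter with the two projections, which are surjective as base changes of $v_1$ and $h_1$, shows that $v_0$ and $h_0$ are surjective as well. Setting $K_0=\ker v_0$ and $K_1=\ker v_1$, the map $h_0$ restricts to a morphism $\bar h\colon K_0\to K_1$ whose kernel is precisely $fib^{[2]}(\tilde{\mathfrak g}_{\bullet\bullet})$, and surjectivity of $\tilde{\mathfrak g}_{00}\to\tilde{\mathfrak g}_{01}\times_{\tilde{\mathfrak g}_{11}}\tilde{\mathfrak g}_{10}$ evaluated on pairs $(y,0)$ with $y\in K_1$ shows that $\bar h$ is itself surjective. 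Thus $fib^{[2]}=\ker\bar h$ is the genuine fiber of a fibration.

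Applying the one-variable fact to $v_0$ and $v_1$ gives quasi-isomorphisms $K_0\to TW(\tilde{\mathfrak g}_{\bullet0})$ and $K_1\to TW(\tilde{\mathfrak g}_{\bullet1})$ which, being of the form $x\mapsto(x,0)$, commute with $\bar h$ and with the morphism $TW(\tilde{\mathfrak g}_{\bullet0})\to TW(\tilde{\mathfrak g}_{\bullet1})$ induced by $(h_0,h_1)$; hence they assemble into a weak equivalence of morphisms. Since $TW$ sends weak equivalences of morphisms to quasi-isomorphisms — which follows from the short exact sequence $0\to\Omega\mathfrak b\to TW(f)\to\mathfrak a\to 0$, Corollary~\ref{loop}, and the five lemma — this weak equivalence induces a quasi-isomorphism $TW(\bar h)\xrightarrow{\sim}TW\bigl(TW(\tilde{\mathfrak g}_{\bullet0})\to TW(\tilde{\mathfrak g}_{\bullet1})\bigr)$, whose target is $TW^{[2]}(\tilde{\mathfrak g}_{\bullet\bullet})$ by Remark~\ref{rem.doppiafibra}. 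Composing with the one-variable quasi-isomorphism $fib^{[2]}=\ker\bar h\to TW(\bar h)$ produces the desired quasi-isomorphism $fib^{[2]}(\tilde{\mathfrak g}_{\bullet\bullet})\to TW^{[2]}(\tilde{\mathfrak g}_{\bullet\bullet})$.

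The step I expect to be the genuine obstacle is purely one of bookkeeping: chasing an element $\tilde\omega_{00}\in fib^{[2]}$ through this composite and identifying, under the natural isomorphism of Remark~\ref{rem.doppiafibra}, the resulting element of $TW\bigl(TW(\tilde{\mathfrak g}_{\bullet0})\to TW(\tilde{\mathfrak g}_{\bullet1})\bigr)$ with the quadruple $(\tilde\omega_{00},0,0,0)$ written in the explicit coordinates of $TW^{[2]}$. Concretely, $\tilde\omega_{00}$ maps first to $(\tilde\omega_{00},0)\in TW(\bar h)$, then to $\bigl((\tilde\omega_{00},0),0\bigr)$, and one must unwind the two nested path algebras $\Omega^\bullet(\Delta^1;-)$ into $\Omega^\bullet(\Delta^1\times\Delta^1;\tilde{\mathfrak g}_{11})$ to recover all four vanishing components. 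Granting this identification, naturality of the resulting map in $\tilde{\mathfrak g}_{\bullet\bullet}$ is automatic, since every ingredient used — the inclusions of genuine fibers, the coglueing comparison, and the functor $TW$ — is natural.
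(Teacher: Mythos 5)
Your proof is correct, but it takes a genuinely different route from the paper's. The paper's argument is essentially two lines: since $\tilde{\mathfrak{g}}_{\bullet\bullet}$ is fibrant, the canonical inclusion $\tilde{\mathfrak{g}}_{\bullet\bullet}\to\tilde{\mathfrak{g}}_{TW;\bullet\bullet}$ into its Thom--Whitney resolution is a weak equivalence of \emph{fibrant} squares, so Remark~\ref{rem.quasi.iso} (invariance of the strict double fiber under weak equivalences of fibrant squares) applies directly, and $fib^{[2]}(\tilde{\mathfrak{g}}_{TW;\bullet\bullet})$ equals $TW^{[2]}(\tilde{\mathfrak{g}}_{\bullet\bullet})$ by definition; the formula $\tilde{\omega}_{00}\mapsto(\tilde{\omega}_{00},0,0,0)$ is then read off. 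You instead bypass Remark~\ref{rem.quasi.iso} and build the comparison by hand through the iterated-fiber description of Remark~\ref{rem.doppiafibra}: a one-variable lemma (for a fibration $f$, the map $\ker f\to TW(f)$, $x\mapsto(x,0)$, is a quasi-isomorphism, via coglueing and acyclicity of the based path space), the observation that fibrancy makes $\bar{h}\colon\ker v_0\to\ker v_1$ surjective with kernel exactly $fib^{[2]}(\tilde{\mathfrak{g}}_{\bullet\bullet})$, and homotopy invariance of $TW$ (loop-space exact sequence plus five lemma). Note that your middle step is precisely the content of the short exact sequences stated in Remark~\ref{rem.quasi.iso}, so you are in part re-proving that remark rather than citing it. As for what each approach buys: the paper's proof is shorter given the infrastructure already in place, whereas yours is more elementary in that it only ever manipulates one-variable homotopy fibers, and it makes the element-level formula emerge transparently from the chase $\tilde{\omega}_{00}\mapsto(\tilde{\omega}_{00},0)\mapsto\bigl((\tilde{\omega}_{00},0),0\bigr)$. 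The identification you defer as ``bookkeeping'' is genuinely routine, since each of your maps is of the form $x\mapsto(x,0)$ and the isomorphism of Remark~\ref{rem.doppiafibra} is componentwise --- the paper glosses the analogous verification with ``it is immediate to see'' --- so this deferral is not a gap.
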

\begin{proof}
Since $\tilde{\mathfrak{g}}_{\bullet\bullet}$ is fibrant, the resolution $\tilde{\mathfrak{g}}_{\bullet\bullet}\xrightarrow{\sim} \tilde{\mathfrak{g}}_{TW;\bullet\bullet}$ is a weak equivalence of fibrant commutative squares of DG-Lie algebras, and so
\[
 fib^{[2]}(\tilde{\mathfrak{g}}_{\bullet\bullet})\to fib^{[2]}(\tilde{\mathfrak{g}}_{TW;\bullet\bullet})=TW^{[2]}(\tilde{\mathfrak{g}}_{\bullet\bullet})
\]
is a quasi-isomorphism by Remark \ref{rem.quasi.iso}. Written out explicitly, it is immediate to see that the morphism $fib^{[2]}(\tilde{\mathfrak{g}}_{\bullet\bullet})\to TW^{[2]}(\tilde{\mathfrak{g}}_{\bullet\bullet})$ induced this way is $\tilde{\omega}_{00}\mapsto (\tilde{\omega}_{00},0,0,0)$.
\end{proof}

Now, the proof that the double homotopy fiber is well defined is immediate. More precisely we have
\begin{proposition}\label{homotopy-invariance-of-double-fiber}
Let $\mathfrak{g}_{\bullet\bullet}$ be a commutative square of DG-Lie algebras. Then, for any fibrant resolution  $\mathfrak{g}_{\bullet\bullet}\to \tilde{\mathfrak{g}}_{\bullet\bullet}$ of $\mathfrak{g}_{\bullet\bullet}$, the DG-Lie algebra $fib^{[2]}(\tilde{\mathfrak{g}}_{\bullet\bullet})$ is weakly equivalent to $TW^{[2]}(\mathfrak{g}_{\bullet\bullet})$. In particular, the homotopy equivalence class of $hofib^{[2]}(\mathfrak{g}_{\bullet\bullet})$ is well defined and only depends on the homotopy equivalence class of $\mathfrak{g}_{\bullet\bullet}$.
\end{proposition}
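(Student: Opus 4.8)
The plan is to reduce everything to the Thom--Whitney model $TW^{[2]}(\mathfrak{g}_{\bullet\bullet})$, which is manifestly functorial in $\mathfrak{g}_{\bullet\bullet}$ since it is assembled from the functors $\Omega^\bullet(\Delta^1;-)$, $\Omega^\bullet(\Delta^1\times\Delta^1;-)$ and fiber products (Definition~\ref{def.twdoublefiber}). Concretely, I would first observe that the assignment $\mathfrak{g}_{\bullet\bullet}\mapsto (\mathfrak{g}_{\bullet\bullet}\xrightarrow{\sim}\mathfrak{g}_{TW;\bullet\bullet})$ is natural, so that a fibrant resolution $\mathfrak{g}_{\bullet\bullet}\to\tilde{\mathfrak{g}}_{\bullet\bullet}$ fits into a commutative diagram of commutative squares whose two horizontal arrows $\mathfrak{g}_{\bullet\bullet}\xrightarrow{\sim}\mathfrak{g}_{TW;\bullet\bullet}$ and $\tilde{\mathfrak{g}}_{\bullet\bullet}\xrightarrow{\sim}\tilde{\mathfrak{g}}_{TW;\bullet\bullet}$ are weak equivalences by Definition~\ref{def.twdoublefiber}, and whose left vertical arrow is a weak equivalence by hypothesis. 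Applying the two-out-of-three property componentwise (recall from Remark~\ref{rem.reedy} that weak equivalences in either Reedy structure are detected componentwise) yields that the induced map $\mathfrak{g}_{TW;\bullet\bullet}\to\tilde{\mathfrak{g}}_{TW;\bullet\bullet}$ is a weak equivalence between fibrant commutative squares.

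Next I would invoke the two facts already at our disposal. By Remark~\ref{rem.quasi.iso}, a weak equivalence of fibrant squares induces a quasi-isomorphism on double fibers, so the previous step gives a quasi-isomorphism
\[ TW^{[2]}(\mathfrak{g}_{\bullet\bullet})=fib^{[2]}(\mathfrak{g}_{TW;\bullet\bullet})\xrightarrow{\sim}fib^{[2]}(\tilde{\mathfrak{g}}_{TW;\bullet\bullet})=TW^{[2]}(\tilde{\mathfrak{g}}_{\bullet\bullet}). \]
On the other hand, since $\tilde{\mathfrak{g}}_{\bullet\bullet}$ is already fibrant, Lemma~\ref{to-be-used-immediately} supplies a quasi-isomorphism $fib^{[2]}(\tilde{\mathfrak{g}}_{\bullet\bullet})\xrightarrow{\sim}TW^{[2]}(\tilde{\mathfrak{g}}_{\bullet\bullet})$. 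Splicing these two produces the zigzag
\[ TW^{[2]}(\mathfrak{g}_{\bullet\bullet})\xrightarrow{\sim}TW^{[2]}(\tilde{\mathfrak{g}}_{\bullet\bullet})\xleftarrow{\sim}fib^{[2]}(\tilde{\mathfrak{g}}_{\bullet\bullet}), \]
which exhibits $fib^{[2]}(\tilde{\mathfrak{g}}_{\bullet\bullet})$ as weakly equivalent to $TW^{[2]}(\mathfrak{g}_{\bullet\bullet})$, proving the first assertion.

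For the ``in particular'' clause, I would note that $TW^{[2]}(\mathfrak{g}_{\bullet\bullet})$ is a single DG-Lie algebra attached canonically to $\mathfrak{g}_{\bullet\bullet}$, independent of any choice; since every fibrant resolution yields a $fib^{[2]}$ weakly equivalent to it, the homotopy class of $hofib^{[2]}(\mathfrak{g}_{\bullet\bullet})$ is unambiguous and coincides with that of $TW^{[2]}(\mathfrak{g}_{\bullet\bullet})$. Finally, if $\mathfrak{g}_{\bullet\bullet}$ and $\mathfrak{h}_{\bullet\bullet}$ are homotopy equivalent, i.e. joined by a zigzag of weak equivalences of commutative squares, applying the functorial Thom--Whitney resolution to each stage and using Remark~\ref{rem.quasi.iso} exactly as above turns this zigzag into a zigzag of quasi-isomorphisms between the corresponding $TW^{[2]}$'s, giving the claimed homotopy invariance.

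The only point that requires genuine care---and which I regard as the main (if modest) obstacle---is justifying that the componentwise two-out-of-three argument of the first paragraph is legitimate, i.e. that the Thom--Whitney resolution of a commutative square is natural and that the structural map $\mathfrak{g}_{\bullet\bullet}\xrightarrow{\sim}\mathfrak{g}_{TW;\bullet\bullet}$ is a componentwise quasi-isomorphism into a fibrant square. This is where the explicit fiber-product description of Definition~\ref{def.twdoublefiber} (equivalently, the iterated description of Remark~\ref{rem.doppiafibra} together with right properness) is used, rather than any abstract formal nonsense. Everything else is a formal consequence of Remark~\ref{rem.quasi.iso} and Lemma~\ref{to-be-used-immediately}.
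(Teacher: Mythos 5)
Your proof is correct and follows essentially the same route as the paper's: functoriality of the Thom--Whitney resolution, Remark~\ref{rem.quasi.iso} applied to the induced weak equivalence of fibrant squares, and Lemma~\ref{to-be-used-immediately}, spliced into a zigzag. The only difference is cosmetic: where the paper asserts that the induced map $\mathfrak{g}_{TW;\bullet\bullet}\to\tilde{\mathfrak{g}}_{TW;\bullet\bullet}$ is a weak equivalence ``by functoriality,'' you justify this via naturality of the resolution plus componentwise two-out-of-three, which is a legitimate (and slightly more explicit) rendering of the same step.
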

\begin{proof}
The weak equivalence $\mathfrak{g}_{\bullet\bullet}\to \tilde{\mathfrak{g}}_{\bullet\bullet}$  of commutative squares of DG-Lie algebras induces by functoriality 
a weak equivalence $\mathfrak{g}_{TW;\bullet\bullet}\xrightarrow{\sim} \tilde{\mathfrak{g}}_{TW;\bullet\bullet}$. By Remark \ref{rem.quasi.iso} this gives a quasi-isomorphism $TW^{[2]}(\mathfrak{g}_{\bullet\bullet})\xrightarrow{\sim} TW^{[2]}(\tilde{\mathfrak{g}}_{\bullet\bullet})$, while by Lemma \ref{to-be-used-immediately} we have a canonical quasi-iso\-morphism $ fib^{[2]}(\tilde{\mathfrak{g}}_{\bullet\bullet})\to TW^{[2]}(\tilde{\mathfrak{g}}_{\bullet\bullet})$.
\end{proof}

\begin{lemma}\label{lem.hypercohomology} 
The cohomology of the double homotopy fiber of a commutative square $\mathfrak{g}_{\bullet\bullet}$ is naturally isomorphic to the hypercohomology of the complex of DG-vector spaces
\[ \mathrm{Tot}(\mathfrak{g}_{\bullet\bullet})\colon\qquad  \mathfrak{g}_{00}\to \mathfrak{g}_{01}\oplus \mathfrak{g}_{10}\to \mathfrak{g}_{11}.\]  
\end{lemma}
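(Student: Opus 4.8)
The plan is to reduce the statement to a computation in the category of DG-vector spaces, where homotopy fibers are computed by mapping cocones, and then to identify the iterated mapping cocone of the square with the totalization of $\mathrm{Tot}(\mathfrak{g}_{\bullet\bullet})$. First I would forget the Lie brackets: since cohomology only sees the underlying complexes, and the constructions $TW$, $TW^{[2]}$, $fib^{[2]}$ are built out of kernels and fiber products that are preserved by the forgetful functor $\mathbf{DGLA}\to\mathbf{DG}$, it suffices to prove the isomorphism for the underlying cochain complexes.

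Next, by Proposition~\ref{homotopy-invariance-of-double-fiber} we may replace $hofib^{[2]}(\mathfrak{g}_{\bullet\bullet})$ by the explicit model $TW^{[2]}(\mathfrak{g}_{\bullet\bullet})$, and by Remark~\ref{rem.doppiafibra} we have the natural isomorphism $TW^{[2]}(\mathfrak{g}_{\bullet\bullet})\cong TW\bigl(TW(\mathfrak{g}_{\bullet 0})\to TW(\mathfrak{g}_{\bullet 1})\bigr)$. As recalled in Section~2, the underlying complex of the Thom--Whitney fiber $TW(f)$ of any morphism $f$ is a representative of the homotopy fiber in $\mathbf{DG}$, hence naturally quasi-isomorphic to the mapping cocone $\cone(f)[-1]$; this is exactly the content of the long exact cohomology sequence displayed after the definition of $hofib$. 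Applying this fact twice, the underlying complex of $TW^{[2]}(\mathfrak{g}_{\bullet\bullet})$ is naturally quasi-isomorphic to the iterated cocone $\cone\bigl(\cone(\mathfrak{g}_{00}\to\mathfrak{g}_{10})[-1]\to\cone(\mathfrak{g}_{01}\to\mathfrak{g}_{11})[-1]\bigr)[-1]$.

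The remaining step is purely homological: expanding this iterated cocone gives the direct sum $\mathfrak{g}_{00}\oplus\mathfrak{g}_{01}[-1]\oplus\mathfrak{g}_{10}[-1]\oplus\mathfrak{g}_{11}[-2]$, equipped with the internal differentials together with the external maps $(h_0,v_0)$ and $v_1-h_1$ (with the usual Koszul signs), the square's commutativity ensuring that the total differential squares to zero. This is precisely the totalization of the complex of DG-vector spaces $\mathrm{Tot}(\mathfrak{g}_{\bullet\bullet})\colon \mathfrak{g}_{00}\to\mathfrak{g}_{01}\oplus\mathfrak{g}_{10}\to\mathfrak{g}_{11}$, whose cohomology is by definition $\mathbb{H}^*(\mathrm{Tot}(\mathfrak{g}_{\bullet\bullet}))$. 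Naturality is then automatic, since every identification used is functorial in $\mathfrak{g}_{\bullet\bullet}$; concretely, the resulting isomorphism is induced at the cochain level by the integration map $(\omega_{00},\omega_{01},\omega_{10},\omega_{11})\mapsto\bigl(\omega_{00},\int_{\Delta^1}\omega_{01},\int_{\Delta^1}\omega_{10},\int_{\Delta^1\times\Delta^1}\omega_{11}\bigr)$, which generalizes the integration quasi-isomorphism of Lemma~\ref{lemma.tw-quotient}.

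I expect the only delicate point to be the sign and shift bookkeeping in identifying the iterated mapping cocone with the totalization of $\mathrm{Tot}(\mathfrak{g}_{\bullet\bullet})$. The abstract argument via homotopy fibers has the advantage of sidestepping the direct verification, through Stokes' theorem over $\Delta^1\times\Delta^1$, that the explicit integration map above is a quasi-isomorphism; one could, however, alternatively carry out that Fubini-type computation in order to exhibit the natural isomorphism by hand.
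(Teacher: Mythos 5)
Your proof is correct, but it takes a genuinely different route from the paper's. The paper's own argument is shorter and more abstract: it takes an arbitrary fibrant resolution $\mathfrak{g}_{\bullet\bullet}\xrightarrow{\sim}\tilde{\mathfrak{g}}_{\bullet\bullet}$, notes that componentwise quasi-isomorphisms do not change the hypercohomology of $\mathrm{Tot}$, and then uses fibrancy (surjectivity of $\tilde{\mathfrak{g}}_{00}\to\tilde{\mathfrak{g}}_{01}\times_{\tilde{\mathfrak{g}}_{11}}\tilde{\mathfrak{g}}_{10}$ and of the maps to $\tilde{\mathfrak{g}}_{11}$) to get the exact sequence $0\to fib^{[2]}(\tilde{\mathfrak{g}}_{\bullet\bullet})\to\tilde{\mathfrak{g}}_{00}\to\tilde{\mathfrak{g}}_{01}\oplus\tilde{\mathfrak{g}}_{10}\to\tilde{\mathfrak{g}}_{11}\to 0$, i.e. $\mathrm{Tot}(\tilde{\mathfrak{g}}_{\bullet\bullet})$ is a resolution of the double fiber; this settles the lemma with no explicit model and no sign bookkeeping whatsoever. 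You instead work with the concrete model $TW^{[2]}(\mathfrak{g}_{\bullet\bullet})$, view it via Remark~\ref{rem.doppiafibra} as an iterated Thom--Whitney fiber, pass to $\mathbf{DG}$ where homotopy fibers are mapping cocones, and identify the iterated cocone with the totalization. This works, but one point should be made explicit: to apply the cocone identification twice you need the quasi-isomorphism $TW(f)\to \cone(f)[-1]$ to be compatible with maps of arrows, and this is exactly what the integration map $(\omega_0,\omega_1)\mapsto\bigl(\omega_0,\int_{\Delta^1}\omega_1\bigr)$ provides, since integration commutes with coefficient maps; one then concludes by the five lemma, or alternatively verifies directly by Stokes' theorem that your explicit formula on $TW^{[2]}$ is a chain map. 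What your route buys is precisely this cochain-level formula for the isomorphism, which the paper only records separately and for squares of inclusions (Corollary~\ref{cor.cohomology-minus-2}, used later in Proposition~\ref{prop.double-fiber-in-cohomology}); what the paper's route buys is brevity and complete independence from sign and shift conventions.
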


\begin{proof}
A fibrant resolution $\mathfrak{g}_{\bullet\bullet}\xrightarrow{\sim} \tilde{\mathfrak{g}}_{\bullet\bullet}$ induces a quasi-isomorphism of complexes of DG-vector spaces $\mathrm{Tot}(\mathfrak{g}_{\bullet\bullet})\xrightarrow{\sim} \mathrm{Tot}(\tilde{\mathfrak{g}}_{\bullet\bullet})$. Since $\tilde{\mathfrak{g}}_{\bullet\bullet}$ is fibrant, we have a short exact sequence of DG-vector spaces
\[ 0\to fib^{[2]}(\tilde{\mathfrak{g}}_{\bullet\bullet})\to \tilde{\mathfrak{g}}_{00}\to \tilde{\mathfrak{g}}_{01}\oplus \tilde{\mathfrak{g}}_{10}\to \tilde{\mathfrak{g}}_{11}\to 0,\]  
and so
\[
\mathbb{H}^i(\mathrm{Tot}(\mathfrak{g}_{\bullet\bullet}))\cong \mathbb{H}^i(\mathrm{Tot}(\tilde{\mathfrak{g}}_{\bullet\bullet}))=H^i(fib^{[2]}(\tilde{\mathfrak{g}}_{\bullet\bullet}))=H^i(hofib^{[2]}(\mathfrak{g}_{\bullet\bullet})).
\]
\end{proof}

\begin{corollary}\label{cor.cohomology-minus-2}
Assume that a commutative square $\mathfrak{g}_{\bullet\bullet}$ of DG-Lie algebras is the fiber product of two inclusions of DG-Lie algebras. Then
\[
H^*(hofib^{[2]}(\mathfrak{g}_{\bullet\bullet}))\cong H^{*-2}(\mathfrak{g}_{11}/(\mathfrak{g}_{10}+\mathfrak{g}_{01})).
\]
Moreover, the isomorphism $H^*(TW^{[2]}(\mathfrak{g}_{\bullet\bullet}))\cong H^{*-2}(\mathfrak{g}_{11}/(\mathfrak{g}_{01}+\mathfrak{g}_{10}))$ is induced by the morphism of chain complexes
\begin{align*}
TW^{[2]}(\mathfrak{g}_{\bullet\bullet})&\to \mathfrak{g}_{11}/(\mathfrak{g}_{10}+\mathfrak{g}_{01})[-2]\\
(\omega_{00},\omega_{10},\omega_{01},\omega_{11})&\mapsto \int_{\Delta^1\times\Delta^1}\omega_{11} \mod \mathfrak{g}_{01}+\mathfrak{g}_{10}).
\end{align*}
\end{corollary}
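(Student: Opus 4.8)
The plan is to obtain the abstract isomorphism from Lemma~\ref{lem.hypercohomology} and the explicit integration formula from the iterated description of the double Thom--Whitney fiber in Remark~\ref{rem.doppiafibra}, reducing in each case to Lemma~\ref{lemma.tw-quotient}. Under the standing hypothesis we may regard $\mathfrak{g}_{01}$ and $\mathfrak{g}_{10}$ as DG-Lie subalgebras of $\mathfrak{g}_{11}$ with $\mathfrak{g}_{00}=\mathfrak{g}_{01}\cap\mathfrak{g}_{10}$, all four structure maps being the inclusions. Then the total complex of Lemma~\ref{lem.hypercohomology},
\[ \mathrm{Tot}(\mathfrak{g}_{\bullet\bullet})\colon\quad \mathfrak{g}_{01}\cap\mathfrak{g}_{10}\xrightarrow{\ x\mapsto(x,x)\ }\mathfrak{g}_{01}\oplus\mathfrak{g}_{10}\xrightarrow{\ (a,b)\mapsto a-b\ }\mathfrak{g}_{11}, \]
(signs depending on conventions), has rows which, in every internal degree, are exact at the first two spots: injectivity on the left is clear, and the kernel of the second map consists of the pairs $(a,b)$ with $a=b$ in $\mathfrak{g}_{11}$, forcing $a=b\in\mathfrak{g}_{01}\cap\mathfrak{g}_{10}=\mathfrak{g}_{00}$, i.e.\ exactly the image of the first map. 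The cohomology $\mathfrak{g}_{11}/(\mathfrak{g}_{01}+\mathfrak{g}_{10})$ is concentrated at the third spot, so the spectral sequence of the bicomplex collapses at $E_1$ and gives $\mathbb{H}^*(\mathrm{Tot}(\mathfrak{g}_{\bullet\bullet}))\cong H^{*-2}(\mathfrak{g}_{11}/(\mathfrak{g}_{01}+\mathfrak{g}_{10}))$; by Lemma~\ref{lem.hypercohomology} this is the first assertion.

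For the explicit map I would instead compute with $TW^{[2]}(\mathfrak{g}_{\bullet\bullet})\cong TW\bigl(TW(\mathfrak{g}_{\bullet0})\xrightarrow{j}TW(\mathfrak{g}_{\bullet1})\bigr)$. The two columns $\mathfrak{g}_{\bullet0}=(\mathfrak{g}_{00}\hookrightarrow\mathfrak{g}_{10})$ and $\mathfrak{g}_{\bullet1}=(\mathfrak{g}_{01}\hookrightarrow\mathfrak{g}_{11})$ are inclusions, so Lemma~\ref{lemma.tw-quotient} provides quasi-isomorphisms of complexes
\[ p_0\colon TW(\mathfrak{g}_{\bullet0})\xrightarrow{\ \sim\ }(\mathfrak{g}_{10}/\mathfrak{g}_{00})[-1],\qquad p_1\colon TW(\mathfrak{g}_{\bullet1})\xrightarrow{\ \sim\ }(\mathfrak{g}_{11}/\mathfrak{g}_{01})[-1], \]
each given by integration over the inner $\Delta^1$. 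Since integration commutes with the linear maps $h_0,h_1$, the pair $(p_0,p_1)$ is a morphism of arrows from $j$ to the map $\bar\jmath\colon(\mathfrak{g}_{10}/\mathfrak{g}_{00})[-1]\to(\mathfrak{g}_{11}/\mathfrak{g}_{01})[-1]$ induced by $h_1$. As the underlying complex of $TW$ is the mapping cocone, a componentwise quasi-isomorphism of arrows induces a quasi-isomorphism $TW(j)\to TW(\bar\jmath)$. Now $\bar\jmath$ is injective, its kernel being $(\mathfrak{g}_{10}\cap\mathfrak{g}_{01})/\mathfrak{g}_{00}=0$, hence it is the inclusion of an abelian cochain complex; a second application of Lemma~\ref{lemma.tw-quotient} (this time as DG-Lie algebras, the target being abelian) yields $TW(\bar\jmath)\xrightarrow{\sim}\bigl((\mathfrak{g}_{11}/\mathfrak{g}_{01})/(\mathfrak{g}_{10}/\mathfrak{g}_{00})\bigr)[-2]$ by integration over the outer $\Delta^1$, and the second isomorphism theorem identifies the quotient with $\mathfrak{g}_{11}/(\mathfrak{g}_{01}+\mathfrak{g}_{10})$.

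Finally I would trace a quadruple $(\omega_{00},\omega_{10},\omega_{01},\omega_{11})$ through the two integrations. In the iterated model the inner $\Delta^1$ records the column direction and the outer $\Delta^1$ the row direction, so that $\omega_{11}\in\Omega^\bullet(\Delta^1\times\Delta^1;\mathfrak{g}_{11})$ carries both coordinates; $p_1$ integrates the inner one and the second step the outer one. By the Fubini property of the Whitney integral, $\int_{\Delta^1}\int_{\Delta^1}\omega_{11}=\int_{\Delta^1\times\Delta^1}\omega_{11}$, and reducing modulo $\mathfrak{g}_{01}$ and then modulo the image of $\mathfrak{g}_{10}/\mathfrak{g}_{00}$ amounts to reducing modulo $\mathfrak{g}_{01}+\mathfrak{g}_{10}$; hence the composite quasi-isomorphism is exactly $(\omega_{00},\omega_{10},\omega_{01},\omega_{11})\mapsto\int_{\Delta^1\times\Delta^1}\omega_{11}\bmod(\mathfrak{g}_{01}+\mathfrak{g}_{10})$, as claimed. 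Combined with Proposition~\ref{homotopy-invariance-of-double-fiber}, which identifies $H^*(hofib^{[2]}(\mathfrak{g}_{\bullet\bullet}))$ with $H^*(TW^{[2]}(\mathfrak{g}_{\bullet\bullet}))$, this also re-proves the first isomorphism. I expect the only real obstacle to be bookkeeping: pinning down the correct signs in the degree $-2$ double integration and verifying at each stage that the induced maps are genuine inclusions --- precisely the point where the hypothesis $\mathfrak{g}_{00}=\mathfrak{g}_{01}\cap\mathfrak{g}_{10}$ enters --- while the remaining steps are a routine diagram chase.
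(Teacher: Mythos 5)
Your proposal is correct and follows essentially the same route as the paper: the first isomorphism via Lemma~\ref{lem.hypercohomology} together with the four-term exact sequence $0\to\mathfrak{g}_{00}\to\mathfrak{g}_{01}\oplus\mathfrak{g}_{10}\to\mathfrak{g}_{11}\to\mathfrak{g}_{11}/(\mathfrak{g}_{10}+\mathfrak{g}_{01})\to 0$ (which is exactly where the fiber-product hypothesis enters), and the explicit formula from Lemma~\ref{lemma.tw-quotient}. The paper's proof of the second part is the single line ``follows from Lemma~\ref{lemma.tw-quotient}''; your iterated argument via Remark~\ref{rem.doppiafibra}, two applications of that lemma, and Fubini is precisely the natural unpacking of that line, so you have simply supplied the details the authors left implicit.
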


\begin{proof}
The first part is an immediate consequence of the exact sequence of DG-vector spaces
\[ 0\to \mathfrak{g}_{00}\to \mathfrak{g}_{01}\oplus \mathfrak{g}_{10}\to 
\mathfrak{g}_{11}\to \frac{\mathfrak{g}_{11}}{\;\mathfrak{g}_{10}+\mathfrak{g}_{01}\;}\to 0\,.\]  
The second part follows from Lemma~\ref{lemma.tw-quotient}.
\end{proof}

\begin{corollary}\label{cor.abelinitafibraquadrato}
Let  $\mathfrak{g}_{\bullet\bullet}$
be a commutative square of differential graded Lie algebras.
If the induced map 
\[ H^*(\mathfrak{g}_{00})\to H^*(\mathfrak{g}_{10})\times_{H^*(\mathfrak{g}_{11})}H^*(\mathfrak{g}_{01})\]
is injective, then the double homotopy fiber $hofib^{[2]}(\mathfrak{g}_{\bullet\bullet})$ is homotopy abelian.
In particular, if either $H^*(\mathfrak{g}_{00})\to H^*(\mathfrak{g}_{10})$ or 
$H^*(\mathfrak{g}_{00})\to H^*(\mathfrak{g}_{01})$ is injective, then $hofib^{[2]}(\mathfrak{g}_{\bullet\bullet})$ is homotopy abelian.
\end{corollary}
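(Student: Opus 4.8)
The plan is to realize $hofib^{[2]}(\mathfrak{g}_{\bullet\bullet})$ as the homotopy fiber of a \emph{single} morphism that is injective in cohomology, and then to quote Corollary~\ref{fiber-quasi-abelian}. First I would fix a fibrant resolution $\mathfrak{g}_{\bullet\bullet}\xrightarrow{\sim}\tilde{\mathfrak{g}}_{\bullet\bullet}$ in the inverse Reedy structure. By the fibrancy condition recalled after Remark~\ref{rem.reedy}, the comparison map
\[
s\colon \tilde{\mathfrak{g}}_{00}\longrightarrow \tilde{\mathfrak{g}}_{10}\times_{\tilde{\mathfrak{g}}_{11}}\tilde{\mathfrak{g}}_{01}
\]
is a surjection of DG-Lie algebras, and by Definition~\ref{def.doublefiber} its kernel is exactly $fib^{[2]}(\tilde{\mathfrak{g}}_{\bullet\bullet})=hofib^{[2]}(\mathfrak{g}_{\bullet\bullet})$. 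Being the kernel of a fibration, this is a model for the homotopy fiber $hofib(s)$, i.e.\ the homotopy fiber of the morphism from $\mathfrak{g}_{00}$ into the homotopy pullback of the rest of the square.

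The crux is to prove that $s$ is injective in cohomology. I would avoid computing $H^*$ of the homotopy pullback directly, since by Mayer--Vietoris it is not the fibre product of the cohomologies. Instead I would compose $s$ with the natural inclusion $t\colon \tilde{\mathfrak{g}}_{10}\times_{\tilde{\mathfrak{g}}_{11}}\tilde{\mathfrak{g}}_{01}\hookrightarrow \tilde{\mathfrak{g}}_{10}\times \tilde{\mathfrak{g}}_{01}$ into the product. By the universal property of the pullback, the two components of $s$ are the resolved structure maps, so the composite $t\circ s$ induces on cohomology precisely $x\mapsto (H^*(v_0)\,x,\,H^*(h_0)\,x)$. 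Its kernel is $\{x : H^*(v_0)\,x=0,\ H^*(h_0)\,x=0\}$, which coincides with the kernel of the map $H^*(\mathfrak{g}_{00})\to H^*(\mathfrak{g}_{10})\times_{H^*(\mathfrak{g}_{11})}H^*(\mathfrak{g}_{01})$ assumed injective. Hence $t\circ s$ is injective in cohomology, and therefore so is $s$ (if $H^*(s)\,x=0$ then $H^*(t\circ s)\,x=0$, forcing $x=0$).

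With $s$ injective in cohomology, Corollary~\ref{fiber-quasi-abelian} applied to $s$ gives at once that $hofib^{[2]}(\mathfrak{g}_{\bullet\bullet})=hofib(s)$ is homotopy abelian. For the final clause I would only observe that if either $H^*(\mathfrak{g}_{00})\to H^*(\mathfrak{g}_{10})$ or $H^*(\mathfrak{g}_{00})\to H^*(\mathfrak{g}_{01})$ is injective, then any $x$ with $(H^*(v_0)\,x,H^*(h_0)\,x)=0$ must already vanish, so the hypothesis of the main assertion is satisfied.

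I expect the main obstacle to be conceptual rather than computational: one is tempted to iterate the one-variable construction and write $hofib^{[2]}(\mathfrak{g}_{\bullet\bullet})$ as the homotopy fiber of $TW(\mathfrak{g}_{\bullet 0})\to TW(\mathfrak{g}_{\bullet 1})$ (as in Remark~\ref{rem.doppiafibra}), but this intermediate morphism need \emph{not} be injective in cohomology under the stated hypothesis, so Corollary~\ref{fiber-quasi-abelian} does not apply to it. The decisive move is instead to take the homotopy fiber of $\mathfrak{g}_{00}$ \emph{directly} into the homotopy pullback, and to extract the needed injectivity of $s$ from the injectivity of its further composite $t\circ s$ into the product --- using only that injectivity of a composite forces injectivity of its first factor.
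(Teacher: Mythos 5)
Your proposal is correct and follows essentially the same route as the paper: after making the square fibrant, both proofs identify $hofib^{[2]}(\mathfrak{g}_{\bullet\bullet})$ with the homotopy fiber of the fibration $\tilde{\mathfrak{g}}_{00}\twoheadrightarrow\tilde{\mathfrak{g}}_{10}\times_{\tilde{\mathfrak{g}}_{11}}\tilde{\mathfrak{g}}_{01}$, deduce injectivity in cohomology of this map from the hypothesis by factoring the given map through $H^*$ of the fiber product (you factor through the product $H^*(\tilde{\mathfrak{g}}_{10})\times H^*(\tilde{\mathfrak{g}}_{01})$, the paper through $H^*(\mathfrak{g}_{10})\times_{H^*(\mathfrak{g}_{11})}H^*(\mathfrak{g}_{01})$, an immaterial difference since the latter embeds in the former), and then invoke Corollary~\ref{fiber-quasi-abelian}.
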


\begin{proof} It is not restrictive to assume that $\mathfrak{g}_{\bullet\bullet}$ is fibrant diagram. Then 
we have a factorization 
\[ H^*(\mathfrak{g}_{00})\to H^*\left(\mathfrak{g}_{10}\times_{\mathfrak{g}_{11}}\mathfrak{g}_{01}\right)
\to H^*(\mathfrak{g}_{10})\times_{H^*(\mathfrak{g}_{11})}H^*(\mathfrak{g}_{01})\]
and the conclusion follows by Corollary~\ref{fiber-quasi-abelian}.
\end{proof}

\begin{corollary} Let $\mathfrak{g}_0\xrightarrow{f}\mathfrak{g}_1\xrightarrow{g}\mathfrak{g}_2$ be morphisms of differential graded Lie algebras.
Then the homotopy fiber of the natural  morphism
$TW(\mathfrak{g}_0\xrightarrow{f}\mathfrak{g}_1)\to TW(\mathfrak{g}_0\xrightarrow{gf}\mathfrak{g}_2)$ 
is homotopy abelian.
\end{corollary}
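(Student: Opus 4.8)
The plan is to exhibit the stated homotopy fiber as the double homotopy fiber of a suitable commutative square, and then to conclude by the abelianity criterion of Corollary~\ref{cor.abelinitafibraquadrato}. The point is that the whole statement is formal once the right square is written down.

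First I would recognize the natural morphism $TW(\mathfrak{g}_0\xrightarrow{f}\mathfrak{g}_1)\to TW(\mathfrak{g}_0\xrightarrow{gf}\mathfrak{g}_2)$ as the map induced, by functoriality of the Thom--Whitney model, by the morphism of arrows $(\Id_{\mathfrak{g}_0},g)$ from $(\mathfrak{g}_0\xrightarrow{f}\mathfrak{g}_1)$ to $(\mathfrak{g}_0\xrightarrow{gf}\mathfrak{g}_2)$. This datum is exactly a commutative square
\[ \mathfrak{g}_{\bullet\bullet}:\qquad \xymatrix{\mathfrak{g}_0\ar[d]_f\ar[r]^{\Id}&\mathfrak{g}_0\ar[d]^{gf}\\ \mathfrak{g}_1\ar[r]^g&\mathfrak{g}_2} \]
(whose commutativity $gf=g\circ f$ is automatic), with columns $\mathfrak{g}_{\bullet0}=(\mathfrak{g}_0\xrightarrow{f}\mathfrak{g}_1)$ and $\mathfrak{g}_{\bullet1}=(\mathfrak{g}_0\xrightarrow{gf}\mathfrak{g}_2)$.

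Next I would invoke Remark~\ref{rem.doppiafibra}, which provides a natural isomorphism
\[ TW^{[2]}(\mathfrak{g}_{\bullet\bullet})\cong TW\bigl(TW(\mathfrak{g}_{\bullet0})\to TW(\mathfrak{g}_{\bullet1})\bigr), \]
the arrow on the right being precisely the map on Thom--Whitney fibers induced by the horizontal arrows of $\mathfrak{g}_{\bullet\bullet}$, i.e. our natural morphism $(x,y(t))\mapsto(x,g(y(t)))$. Since $TW^{[2]}(\mathfrak{g}_{\bullet\bullet})=hofib^{[2]}(\mathfrak{g}_{\bullet\bullet})$ is a functorial model for the double homotopy fiber, this identifies the homotopy fiber of $TW(f)\to TW(gf)$ with $hofib^{[2]}(\mathfrak{g}_{\bullet\bullet})$ up to weak equivalence. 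Finally I would apply Corollary~\ref{cor.abelinitafibraquadrato}: in the square above the top horizontal arrow $\mathfrak{g}_{00}\to\mathfrak{g}_{01}$ is $\Id_{\mathfrak{g}_0}$, so $H^*(\mathfrak{g}_{00})\to H^*(\mathfrak{g}_{01})$ is the identity, hence injective, and the corollary forces $hofib^{[2]}(\mathfrak{g}_{\bullet\bullet})$ to be homotopy abelian.

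The only delicate step, and thus the one I would verify most carefully, is the bookkeeping of the second paragraph: checking that Remark~\ref{rem.doppiafibra} routes the two columns of $\mathfrak{g}_{\bullet\bullet}$ to the correct Thom--Whitney fibers, and that the map it produces between them really coincides with the given natural morphism $TW(f)\to TW(gf)$. Everything else — the commutativity of the square and the injectivity of an identity map in cohomology — is immediate, so I expect no genuine computational obstacle.
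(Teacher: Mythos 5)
Your proof is correct and is essentially the paper's own argument: the paper also identifies the homotopy fiber of $TW(f)\to TW(gf)$ with the double homotopy fiber of a square via Remark~\ref{rem.doppiafibra} and concludes by Corollary~\ref{cor.abelinitafibraquadrato}, the only (immaterial) difference being that the paper writes the transposed square, with rows $f$, $gf$ and columns $\Id_{\mathfrak{g}_0}$, $g$, so that the injectivity hypothesis is checked on the identity as a vertical rather than a horizontal arrow.
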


\begin{proof} By Remark~\ref{rem.doppiafibra} the homotopy fiber of $TW(\mathfrak{g}_0\xrightarrow{f}\mathfrak{g}_1)\to TW(\mathfrak{g}_0\xrightarrow{gf}\mathfrak{g}_2)$ is quasi-isomorphic to the double homotopy fiber of the diagram
\[ \xymatrix{\mathfrak{g}_0\ar[d]_{\Id_{\mathfrak{g}_0}}\ar[r]^{f}&\mathfrak{g}_1\ar[d]^{g}\\
\mathfrak{g}_0\ar[r]^{gf}&\mathfrak{g}_2\,,}\]
which is homotopy abelian by Corollary~\ref{cor.abelinitafibraquadrato}.
\end{proof}

\bigskip
\section{Cartan calculi and their induced morphisms of deformation functors}
\label{sec.cartanandperiods}

We begin by recalling a few basic facts on the general theory of infinitesimal deformation functors that we will need in what follows: a complete account can be found in \cite{GoMil1,ManettiSemireg,ManettiSeattle}.
We shall denote by $\Set$ the category of sets, by 
$\mathbf{Grp}$ the category of groups, by $\Art$ the category of local Artin $\K$-algebras with residue field $\K$.
For every local ring $A$ its maximal ideal will be denoted $\mathfrak{m}_A$.  For a Lie algebra $\mathfrak{g}^0$, its exponential functor is 
\[ \exp_{\mathfrak{g}^0}\colon \Art\to \mathbf{Grp},\qquad \exp_{\mathfrak{g}^0}(A)=\exp(\mathfrak{g}^0\otimes\mathfrak{m}_A)\;.\]
Given a DG-Lie algebra $\mathfrak{g}$, its Maurer-Cartan functor is
\[ \MC_{\mathfrak{g}}\colon \Art\to\Set,\qquad \MC_{\mathfrak{g}}(A)=\{x^1\in \mathfrak{g}^1\otimes\mathfrak{m}_A\mid dx^1+\frac{1}{2}[x^1,x^1]=0\},\]
where $\mathfrak{g}^i$ denotes the subspace of homogeneous elements of $\mathfrak{g}$ of degree $i$.
The (left) gauge action of $\exp_{\mathfrak{g}^0}$ on $\MC_\mathfrak{g}$ may be written as
\[ e^{x^0}\ast x^1=x^1+\sum_{n\ge 0}\frac{[x^0,-]^n}{(n+1)!}([x^0,x^1]-dx^0)\]
and the corresponding quotient is called the deformation functor associated to $\mathfrak{g}$: 
\[ \Def_\mathfrak{g}=\frac{\;\MC_\mathfrak{g}\;}{\exp_{\mathfrak{g}^0}}\,.\]
 The basic theorem of deformation theory asserts that every quasi-isomorphism $\mathfrak{g}_0\to \mathfrak{g}_1$ of DG-Lie algebras induces an isomorphism of functors $\Def_{\mathfrak{g}_1}\to \Def_{\mathfrak{g}_2}$. 
 
As a consequence, up to isomorphism, the deformation functor $\Def_{\mathfrak{g}}$ only depends on the homotopy class of the DG-Lie algebra $\mathfrak{g}$. 
In particular, it is meaningful to consider the deformation functor associated with the homotopy fiber of a morphism $\mathfrak{g}_\bullet=(\mathfrak{g}_0\xrightarrow{f} \mathfrak{g}_1)$ of DG-Lie algebras. Actually, one has the following useful concrete description for the deformation functor associated to $hofib(\mathfrak{g}_\bullet)$. First, one defines the Maurer-Cartan functor: 
\[ \MC_{\mathfrak{g}_\bullet}(A)=\{(x_0^1,e^{x_1^0})\in 
\left\{(\mathfrak{g}_0^1\otimes\mathfrak{m}_A)\times \exp({\mathfrak{g}_1}^0\otimes\mathfrak{m}_A)\mid 
dx^1_0+\frac{1}{2}[x^1_0,x^1_0]=0,\; e^{x^0_1}\ast 0=f(x^1_0)\right\}.\]
Notice that we write  systematically  $x_i^j$ in order to denote a degree $j$ element in the DG-Lie algebra $\mathfrak{g}_i$.
The gauge action of $ \exp_{{\mathfrak{g}_0}^0}$ on $\MC_{\mathfrak{g}_0}$ lifts to a (left) action of the group functor $\exp_{{\mathfrak{g}_0}^0}\times\exp_{d{\mathfrak{g}_1}^{-1}}$ 
on $\MC_{\mathfrak{g}_\bullet}$ by setting
\[ (e^{x_0^0},e^{dx_1^{-1}})\ast (x_0^1,e^{x_1^0})=(e^{x_0^0}\ast x_0^1,\, e^{f(x_0^0)}e^{x_1^0}e^{-dx_1^{-1}}),\qquad (x_0^0,x_1^{-1})\in {\mathfrak{g}_0}^0\oplus {\mathfrak{g}_1}^{-1}.\]
 
\begin{lemma}\label{lem.modellopiccolofibraomotopica}
Given a 
morphism of differential graded Lie algebras $\mathfrak{g}_\bullet$,  the functor of Artin rings  
\[ \Def_{\mathfrak{g}_\bullet}=\frac{\MC_{\mathfrak{g}_\bullet}}{\exp_{{\mathfrak{g}_0}^0}\times\exp_{d{\mathfrak{g}_1}^{-1}}}\;.\]
is  isomorphic to the deformation functor associated with the homotopy fiber of $\mathfrak{g}_\bullet$.
More precisely,  every commutative diagram of differential graded Lie algebras of the form
\[ \xymatrix{\mathfrak{g}_0\ar[d]\ar[r]^{\sim}&\tilde{\mathfrak{g}}_0\ar@{->>}[d]&\ker(\tilde{\mathfrak{g}}_0\to \tilde{\mathfrak{g}}_1)\ar@{^{ )}->}[l]\ar[d]\\
\mathfrak{g}_1\ar[r]^{\sim}&\tilde{\mathfrak{g}}_1&0\ar[l]}\]
induces canonically two isomorphisms of functors
\[ \Def_{\mathfrak{g}_\bullet}\xrightarrow{\;\cong\;}\Def_{\tilde{\mathfrak{g}}_\bullet}\xleftarrow{\;\cong\;}
\Def_{\ker(\tilde{\mathfrak{g}}_0\to \tilde{\mathfrak{g}}_1)}\;.\]
In particular, the isomorphism $\Def_{\mathfrak{g}_\bullet}\cong \Def_{TW(\mathfrak{g}_\bullet)}$ is induced, at the level of Maurer-Cartan elements, by the natural transformation
\[ \MC_{\mathfrak{g}_\bullet}\to \MC_{TW(\mathfrak{g}_\bullet)},\qquad (x_0^1,e^{x_1^0})\mapsto (x_0^1,e^{tx_1^0}\ast 0)\in TW(\mathfrak{g}_\bullet)\subseteq \mathfrak{g}_0\times \Omega^\bullet(\Delta^1;\mathfrak{g}_1)\;.\] 
\end{lemma}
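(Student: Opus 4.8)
The plan is to prove the two displayed isomorphisms and then obtain the final ``in particular'' as a special case. First I would observe that the canonical factorization \eqref{equ.canonicalfactorization} is itself an instance of the diagram in the statement: taking $\tilde{\mathfrak{g}}_0=\mathfrak{g}_0\times^{\vert_1}_{\mathfrak{g}_1}\Omega^\bullet(\Delta^1,\mathfrak{g}_1)$, $\tilde{\mathfrak{g}}_1=\mathfrak{g}_1$ and $\ker(\vert_0)=TW(\mathfrak{g}_\bullet)$, the two isomorphisms specialize to $\Def_{\mathfrak{g}_\bullet}\cong\Def_{\tilde{\mathfrak{g}}_\bullet}\cong\Def_{TW(\mathfrak{g}_\bullet)}$. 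The explicit formula is then verified directly: the pair $(x_0^1,e^{tx_1^0}\ast0)$ is a well-defined Maurer--Cartan element of $TW(\mathfrak{g}_\bullet)$ because its $\mathfrak{g}_1[t,dt]$-component is a gauge trajectory, hence Maurer--Cartan, equal to $0$ at $t=0$ and to $e^{x_1^0}\ast0=f(x_0^1)$ at $t=1$; one checks that the resulting natural transformation realizes the above composite.

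The engine for both isomorphisms is a tangent--obstruction comparison. From the explicit Maurer--Cartan and gauge description I would compute, over $\K[\epsilon]/(\epsilon^2)$, that a first order element of $\MC_{\mathfrak{g}_\bullet}$ is a pair $(\xi,\eta)\in\mathfrak{g}_0^1\oplus\mathfrak{g}_1^0$ with $d\xi=0$ and $f(\xi)+d\eta=0$, i.e.\ a degree-$1$ cocycle of the mapping cone of $f$, and that the linearized gauge action is by cone coboundaries; hence $T\Def_{\mathfrak{g}_\bullet}\cong H^1(hofib(\mathfrak{g}_\bullet))$. A standard lifting argument along small extensions shows that the obstructions to extending a Maurer--Cartan element live in $H^2(hofib(\mathfrak{g}_\bullet))$, giving a complete obstruction theory. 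The same computation applies verbatim to $\Def_{\tilde{\mathfrak{g}}_\bullet}$ and, setting $\mathfrak{k}=\ker(\tilde f)$ for the surjection $\tilde f\colon\tilde{\mathfrak{g}}_0\twoheadrightarrow\tilde{\mathfrak{g}}_1$, to $\Def_{\mathfrak{k}}$, using that the strict kernel already models $hofib(\tilde{\mathfrak{g}}_\bullet)$.

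Next I would write down the two comparison maps at the level of Maurer--Cartan functors. The left one, induced by $(\alpha,\beta)=(\mathfrak{g}_0\xrightarrow{\sim}\tilde{\mathfrak{g}}_0,\ \mathfrak{g}_1\xrightarrow{\sim}\tilde{\mathfrak{g}}_1)$, sends $(x_0^1,e^{x_1^0})\mapsto(\alpha(x_0^1),e^{\beta(x_1^0)})$; the right one sends $x\mapsto(x,1)$ and is equivariant along $\exp_{\mathfrak{k}^0}\hookrightarrow\exp_{\tilde{\mathfrak{g}}_0^0}\times\exp_{d\tilde{\mathfrak{g}}_1^{-1}}$, $e^c\mapsto(e^c,1)$. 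Both maps are compatible with the tangent and obstruction data above; because $\alpha,\beta$ are quasi-isomorphisms, and because $\tilde f$ is surjective, the induced maps on $H^\ast(hofib)$ are isomorphisms by the five lemma applied to the long exact sequences. In particular each comparison map is bijective on tangent spaces and injective on obstruction spaces, so the standard smoothness criterion for functors of Artin rings (surjectivity on tangent plus injectivity on obstructions gives smoothness, and a smooth morphism that is bijective on tangent spaces is an isomorphism) yields the two isomorphisms of deformation functors.

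The step I expect to be the main obstacle is the hands-on content underlying the right isomorphism, namely that $x\mapsto(x,1)$ is bijective modulo gauge when checked directly. Surjectivity is easy: one lifts the degree-$0$ parameter $x_1^0$ along $\tilde f$ and uses it to normalize the second coordinate to $1$. Injectivity is delicate: if $(x,1)$ and $(y,1)$ are gauge equivalent via $(e^a,e^{db})$, the second-coordinate equation forces $f(a)=db$, and one must promote the $\tilde{\mathfrak{g}}_0$-gauge $a$ (which need not lie in $\mathfrak{k}$) to a genuine $\exp_{\mathfrak{k}^0}$-gauge. This is exactly where the factor $\exp_{d\tilde{\mathfrak{g}}_1^{-1}}$ and surjectivity of $\tilde f$ in degree $-1$ enter, via lifting $b$ to $\tilde{\mathfrak{g}}_0^{-1}$ and checking that the whole trajectory $e^{sa}\ast x$ stays inside $\MC_{\mathfrak{k}}$ (its $\tilde f$-image being $e^{s\,db}\ast0=0$). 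The coglueing theorem (Lemma~\ref{lem.coglueing}) and right properness guarantee that $hofib$ is independent of all choices, so the resulting isomorphisms are canonical.
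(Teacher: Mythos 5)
The paper itself does not give an argument here: its ``proof'' is a citation to \cite[Thm. 2]{FMcone} (homotopy transfer of $L_\infty$ structures) and \cite[Thm. 6.14]{ManettiSemireg} (implicit function theorem for extended deformation functors). You attempt a self-contained proof along the lines of the second reference, but your engine has a genuine gap: the claim that ``a smooth morphism that is bijective on tangent spaces is an isomorphism'' is false for the functors in play, which are gauge quotients and not prorepresentable. Tangent bijectivity plus obstruction injectivity yields smoothness, hence surjectivity, but it cannot detect failure of injectivity caused by degree-zero (gauge) phenomena. Concretely, let $M$ be the DG-Lie algebra with $M^0=\K h$, $M^1=\K x$, $[h,x]=x$, $[x,x]=0$ and zero differential, and let $L=\K x\subset M$ be the abelian subalgebra concentrated in degree $1$. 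Then $\Def_L(A)=\mathfrak{m}_A$, while $\Def_M(A)=\mathfrak{m}_A/(1+\mathfrak{m}_A)$ via $e^{hb}\ast(xa)=x(ae^{b})$; the inclusion induces a morphism $\Def_L\to\Def_M$ that is smooth, bijective on tangent spaces ($H^1=\K$ on both sides) and injective on obstructions ($H^2=0$ on both sides), yet is not injective (on $A=\K[t]/(t^3)$ it identifies $t$ and $t(1+t)$). This is exactly why the basic theorem for DG-Lie algebras requires surjectivity of $H^0$ in addition to bijectivity of $H^1$ and injectivity of $H^2$, and why the cited proofs use machinery that keeps track of all cohomological degrees rather than only $H^1$ and $H^2$. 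Since your argument never brings the degree-zero data into play, neither of the two displayed isomorphisms follows from it.

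Your fallback ``direct check'' of the right-hand isomorphism is also gapped at the injectivity step. If $x,y\in\MC_{\mathfrak{k}}(A)$ with $\mathfrak{k}=\ker\tilde f$ and $y=e^{a}\ast x$, $\tilde f(a)=db$, then indeed $e^{sa}\ast x\in\MC_{\mathfrak{k}}(A)$ for each value of $s$, but this pointwise statement produces no gauge in $\exp(\mathfrak{k}^0\otimes\mathfrak{m}_A)$ between $x$ and $y$: the associated Maurer--Cartan element of $\tilde{\mathfrak{g}}_0[s,ds]$ has a $ds$-component built from $a\notin\mathfrak{k}$, so the homotopy does not take place inside $\mathfrak{k}[s,ds]$. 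The correct argument is the irrelevant-stabilizer trick of Goldman--Millson \cite{GoMil1}: choose a lift $\tilde b\in\tilde{\mathfrak{g}}_0^{-1}\otimes\mathfrak{m}_A$ of $b$ and use that $e^{d\tilde b+[x,\tilde b]}\ast x=x$; then the product $e^{c}:=e^{a}e^{-(d\tilde b+[x,\tilde b])}$ in the gauge group satisfies $\tilde f(c)=0$ (apply $\tilde f$ and Baker--Campbell--Hausdorff, using $\tilde f(x)=0$), so $c\in\mathfrak{k}^0\otimes\mathfrak{m}_A$ and $e^{c}\ast x=e^{a}\ast x=y$. With this repair, together with your (correct) surjectivity argument, the isomorphism $\Def_{\ker\tilde f}\cong\Def_{\tilde{\mathfrak{g}}_\bullet}$ holds; the left-hand isomorphism should then be obtained not from the false criterion but by reducing, through the kernel description and the coglueing theorem, to the basic theorem for quasi-isomorphisms of DG-Lie algebras, whose $H^0$-surjectivity hypothesis is satisfied there.
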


\begin{proof} This is proved for instance in \cite[Thm. 2]{FMcone} via homotopy transfer of $L_{\infty}$ structures, and in  
\cite[Thm. 6.14]{ManettiSemireg} via implicit function theorem for extended deformation functors.
A more explicit and direct proof will appear  in a forthcoming book\footnote{M. Manetti: \emph{Lie methods in deformation theory,} in preparation.} by the second author.\end{proof}

\begin{remark}\label{rem.injective}
When $\mathfrak{g}_\bullet=(\mathfrak{g}_0\to 0)$ is the terminal morphism, then  $\MC_{\mathfrak{g}_\bullet}$ and $\Def_{\mathfrak{g}_\bullet}$ reduce to the usual functors $\MC_{\mathfrak{g}_0}$ and $\Def_{\mathfrak{g}_0}$, respectively. When  $\mathfrak{g}_\bullet=(\mathfrak{g}_0\xrightarrow{f} \mathfrak{g}_1)$ is an injective morphism of DG-Lie algebras, the Maurer-Cartan functor $\MC_{\mathfrak{g}_\bullet}$ admits the simpler description 
\[ \MC_{\mathfrak{g}_\bullet}(A)=\{e^{x_1^0}\in \exp(\mathfrak{g}_1^0\otimes\mathfrak{m}_A)\mid 
e^{x_1^0}\ast 0\in f(\mathfrak{g}_0^1)\otimes \mathfrak{m}_A\}.\]
  
 \end{remark}

\begin{definition}
Let $ \mathfrak{g}$ be a differential graded Lie algebra and let $\flat\colon \mathfrak{g}\to \mathfrak{g}[1]$ be the tautological 
linear map of degree $-1$. The mapping cone  of the identity of $\mathfrak{g}$ is the graded vector space 
\[\cone(\mathrm{Id}_\mathfrak{g})= \mathfrak{g}\oplus  \mathfrak{g}[1]\]
endowed with the unique structure of 
differential graded Lie algebra such that the natural inclusion $\mathfrak{g}\hookrightarrow \cone(\mathrm{Id}_\mathfrak{g})$  
is a morphism of DG-Lie algebras and such that for every $x,y\in \mathfrak{g}$ we have: 
\[  d(\flat x)=x-\flat dx,\qquad [\flat x,y]=\flat[x,y],\qquad [\flat x,\flat y]=0\;.\]
\end{definition}

\begin{lemma}\label{isomorphisms.def}
Let $ \mathfrak{g}$ be a DG-Lie algebra. Then there exists a natural acyclic fibration 
\[
TW(\mathfrak{g}_{}\hookrightarrow\cone(\mathrm{Id}_{\mathfrak{g}_{}}))\stackrel{\sim}{\twoheadrightarrow}\mathfrak{g},
\] inducing 
isomorphisms of deformation functors 
\[\Def_{\mathfrak{g}}\cong  \Def_{TW(\mathfrak{g}\hookrightarrow \cone(\mathrm{Id}_\mathfrak{g}))}\cong \Def_{(\mathfrak{g}\hookrightarrow \cone(\mathrm{Id}_\mathfrak{g}))}\,.\] 
These isomorphism are induced by the morphisms of Maurer-Cartan functors
\[ \MC_{(\mathfrak{g}\hookrightarrow \cone(\mathrm{Id}_\mathfrak{g}))}\to \MC_{\mathfrak{g}},\qquad 
(x,e^{\flat y})\mapsto x,
\]
\[ \MC_{(\mathfrak{g}\hookrightarrow \cone(\mathrm{Id}_\mathfrak{g}))}\to \MC_{TW(\mathfrak{g}\hookrightarrow \cone(\mathrm{Id}_\mathfrak{g}))},\qquad 
 (x,e^{-\flat y})\mapsto (x,e^{-t\flat y}\ast 0)\,.
\]
\end{lemma}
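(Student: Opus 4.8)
The plan is to exhibit the acyclic fibration as the projection onto the first factor, to prove its acyclicity by recognizing its kernel as a based loop space, and then to obtain the two Maurer--Cartan formulas by composing this projection with the general comparison of Lemma~\ref{lem.modellopiccolofibraomotopica}.

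First I would record that $\cone(\mathrm{Id}_\mathfrak{g})$ is acyclic. Writing a general element as $a+\flat b$ with $a,b\in\mathfrak{g}$, the defining relation $d(\flat x)=x-\flat\,dx$ gives
\[
d(a+\flat b)=(da+b)-\flat\,db,
\]
so a cocycle satisfies $b=-da$ and equals $d(\flat a)$; hence $H^*(\cone(\mathrm{Id}_\mathfrak{g}))=0$. (Equivalently, $\cone(\mathrm{Id}_\mathfrak{g})$ is the mapping cone of the quasi-isomorphism $\mathrm{Id}_\mathfrak{g}$.) Next, using the explicit description $TW(\mathfrak{g}\hookrightarrow\cone(\mathrm{Id}_\mathfrak{g}))=\{(\omega_0,\omega_1)\mid\omega_1\vert_0=0,\ \omega_1\vert_1=\omega_0\}$, I would take the projection $\pi(\omega_0,\omega_1)=\omega_0$. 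This is a morphism of DG-Lie algebras; it is surjective because $(x,t_1x)$ lies in $TW$ and maps to $x$, and its kernel is exactly $\{(0,\omega_1)\mid\omega_1\vert_0=\omega_1\vert_1=0\}=TW(0\hookrightarrow\cone(\mathrm{Id}_\mathfrak{g}))=\Omega\,\cone(\mathrm{Id}_\mathfrak{g})$. By Corollary~\ref{loop} this loop space is quasi-isomorphic to $\cone(\mathrm{Id}_\mathfrak{g})[-1]$, hence acyclic. The long exact cohomology sequence of the short exact sequence
\[
0\to\Omega\,\cone(\mathrm{Id}_\mathfrak{g})\to TW(\mathfrak{g}\hookrightarrow\cone(\mathrm{Id}_\mathfrak{g}))\xrightarrow{\pi}\mathfrak{g}\to 0
\]
then shows that $\pi$ is a quasi-isomorphism, so $\pi$ is the asserted acyclic fibration, and by the basic theorem of deformation theory it induces $\Def_{TW(\mathfrak{g}\hookrightarrow\cone(\mathrm{Id}_\mathfrak{g}))}\cong\Def_\mathfrak{g}$.

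For the remaining isomorphism I would simply apply Lemma~\ref{lem.modellopiccolofibraomotopica} to $\mathfrak{g}_\bullet=(\mathfrak{g}\hookrightarrow\cone(\mathrm{Id}_\mathfrak{g}))$, which yields $\Def_{(\mathfrak{g}\hookrightarrow\cone(\mathrm{Id}_\mathfrak{g}))}\cong\Def_{TW(\mathfrak{g}\hookrightarrow\cone(\mathrm{Id}_\mathfrak{g}))}$ via $(x,e^{x_1^0})\mapsto(x,e^{tx_1^0}\ast 0)$. Specializing the degree-zero gauge parameter to $x_1^0=-\flat y$ reproduces the second displayed formula $(x,e^{-\flat y})\mapsto(x,e^{-t\flat y}\ast 0)$, while composing this comparison with $\MC_\pi$, which discards the second component $(\omega_0,\omega_1)\mapsto\omega_0$, yields the first displayed formula $(x,e^{\flat y})\mapsto x$.

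The one calculation I expect to actually carry out, and the only place a sign must be tracked, is the compatibility of the reduced form $(x,e^{\pm\flat y})$ with the Maurer--Cartan constraint $e^{x_1^0}\ast 0\in f(\mathfrak{g}^1)$ of the injective morphism (Remark~\ref{rem.injective}). Here the gauge series truncates: from $[\flat y,\flat z]=0$ and $d\flat y=y-\flat\,dy$ one computes $e^{\flat y}\ast 0=-y+\flat\bigl(dy-\tfrac12[y,y]\bigr)$, so demanding that this lie in $\mathfrak{g}^1$ is precisely the Maurer--Cartan equation for $-y$, with $x=-y$; the opposite sign convention gives the normalization used in the second formula. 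This truncation is the heart of the matter, everything else being a direct unwinding of Lemma~\ref{lem.modellopiccolofibraomotopica} and Corollary~\ref{loop}; the hard part will therefore be the bookkeeping that identifies the abstract isomorphisms with the stated explicit Maurer--Cartan maps.
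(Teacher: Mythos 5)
Your proof is correct and follows essentially the same route as the paper's: the paper likewise observes that, since $\cone(\mathrm{Id}_\mathfrak{g})$ is acyclic, the natural surjection $TW(\mathfrak{g}\hookrightarrow\cone(\mathrm{Id}_\mathfrak{g}))\twoheadrightarrow TW(\mathfrak{g}\to 0)=\mathfrak{g}$ (your projection $\pi$) is a quasi-isomorphism, and then concludes by Lemma~\ref{lem.modellopiccolofibraomotopica}. Your additional details --- the identification of $\ker\pi$ with $\Omega\,\cone(\mathrm{Id}_\mathfrak{g})$ via Corollary~\ref{loop}, the long exact sequence, and the truncated gauge computation $e^{\flat y}\ast 0=-y+\flat\bigl(dy-\tfrac12[y,y]\bigr)$ --- are all correct fillings-in of steps the paper leaves implicit.
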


\begin{proof}
Since the mapping cone of the identity is acyclic,  the natural  
surjective map of  differential graded Lie algebras
\[ TW(\mathfrak{g}_{}\hookrightarrow\cone(\mathrm{Id}_{\mathfrak{g}_{}}))\twoheadrightarrow TW(\mathfrak{g}_{}\to 0)=\mathfrak{g}_{}\]
is a quasi-isomorphism. One concludes by Lemma \ref{lem.modellopiccolofibraomotopica}.
\end{proof}

\begin{remark}\label{rem.g-to-TW}
At the level of chain complexes, a homotopy inverse of the quasi-isomorphism $TW(\mathfrak{g}_{}\hookrightarrow\cone(\mathrm{Id}_{\mathfrak{g}_{}}))\xrightarrow{\sim} \mathfrak{g}$ is the quasi-isomorphism of complexes
\begin{align*}
\mathfrak{g}&\xrightarrow{\sim} TW(\mathfrak{g}_{}\hookrightarrow\cone(\mathrm{Id}_{\mathfrak{g}_{}}))\\
x&\mapsto (x,t\,x +dt\, \flat x).
\end{align*}
\end{remark}

\begin{lemma}\label{lem.sezionecartan} 
Let $ \mathfrak{g}$ be a DG-Lie algebra. The inverse of the isomorphism of deformation functors $\Def_{(\mathfrak{g}\hookrightarrow \cone(\mathrm{Id}_\mathfrak{g}))}\to \Def_{\mathfrak{g}}$ from Lemma \ref{isomorphisms.def} is induced by the morphism of Maurer-Cartan functors
\[ \MC_{\mathfrak{g}}\to \MC_{(\mathfrak{g}\hookrightarrow \cone(\mathrm{Id}_\mathfrak{g}))},\qquad 
x\mapsto (x,e^{-\flat x})\,.
\]
\end{lemma}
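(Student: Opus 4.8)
The plan is to exhibit the inverse directly on Maurer--Cartan elements via the assignment $x\mapsto (x,e^{-\flat x})$, and then to recognize the induced map on deformation functors as the inverse of the isomorphism $\phi\colon\Def_{(\mathfrak{g}\hookrightarrow\cone(\Id_\mathfrak{g}))}\to\Def_\mathfrak{g}$ of Lemma~\ref{isomorphisms.def} by composing with the projection $(x,e^{z})\mapsto x$. Since the inclusion $\mathfrak{g}\hookrightarrow\cone(\Id_\mathfrak{g})$ is injective, the definition of $\MC_{(\mathfrak{g}\hookrightarrow\cone(\Id_\mathfrak{g}))}$ (equivalently Remark~\ref{rem.injective}) tells us that a pair $(x,e^{z})$ lies in $\MC_{(\mathfrak{g}\hookrightarrow\cone(\Id_\mathfrak{g}))}(A)$ precisely when $x\in\MC_\mathfrak{g}(A)$ and $e^{z}\ast 0=x$ inside $\cone(\Id_\mathfrak{g})$, with $x$ read through the inclusion $\mathfrak{g}\subseteq\cone(\Id_\mathfrak{g})$.

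The one genuine computation, and the step I expect to be the crux, is checking that the candidate actually lands in the Maurer--Cartan set, i.e.\ that $e^{-\flat x}\ast 0=x$ for every $x\in\MC_\mathfrak{g}(A)$. I would expand the gauge action formula with $x^0=-\flat x$ and $x^1=0$, so that the only input is $-d(-\flat x)=d(\flat x)=x-\flat\,dx$. Substituting the Maurer--Cartan relation $dx=-\tfrac12[x,x]$ together with the cone identity $\flat[x,x]=[\flat x,x]$ gives $d(\flat x)=x+\tfrac12[\flat x,x]$. The key point is that the series in $[-\flat x,-]$ collapses: because $[\flat x,x]=\flat[x,x]$ lies in $\flat\mathfrak{g}$ and $[\flat x,\flat y]=0$, one has $[\flat x,[\flat x,x]]=0$, so every iterated bracket of order $\ge 2$ vanishes. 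Only the $n=0$ and $n=1$ terms survive, contributing $x+\tfrac12[\flat x,x]$ and $-\tfrac12[\flat x,x]$ respectively; these cancel and leave exactly $x$, as required.

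With this in hand the rest is formal. The first component of $(x,e^{-\flat x})$ is $x$ itself, so composing $x\mapsto(x,e^{-\flat x})$ with the projection inducing $\phi$ yields the identity on $\MC_\mathfrak{g}$, and hence the assignment $x\mapsto[(x,e^{-\flat x})]$ satisfies $\phi([(x,e^{-\flat x})])=[x]$. Since $\phi$ is an isomorphism of functors by Lemma~\ref{isomorphisms.def}, its injectivity forces $x\mapsto[(x,e^{-\flat x})]$ to be constant on gauge-equivalence classes, so it descends to a well-defined natural transformation $\Def_\mathfrak{g}\to\Def_{(\mathfrak{g}\hookrightarrow\cone(\Id_\mathfrak{g}))}$ which is a one-sided, and therefore two-sided, inverse of $\phi$. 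This is the pleasant feature of the argument: one never has to verify gauge-compatibility of $x\mapsto(x,e^{-\flat x})$ by hand, as it comes for free from $\phi$ being an isomorphism.
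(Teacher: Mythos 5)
Your proposal is correct and follows essentially the same route as the paper's proof: the crux in both is the verification that $e^{-\flat x}\ast 0=x$ via the truncation of the gauge-action series forced by the identities $[\flat x,\flat y]=0$ and $[\flat x,y]=\flat[x,y]$, followed by the observation that $x\mapsto(x,e^{-\flat x})$ is a right inverse at the Maurer--Cartan level to the projection inducing the isomorphism of Lemma~\ref{isomorphisms.def}. Your explicit remark that gauge-compatibility of the section comes for free from the injectivity of $\phi$ is only a slightly more detailed rendering of the step the paper leaves implicit.
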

\begin{proof} We only need to show that $x\mapsto (x,e^{-\flat x})$ is actually a map  from $\MC_{\mathfrak{g}}$ to $\MC_{(\mathfrak{g}\hookrightarrow \cone(\mathrm{Id}_\mathfrak{g}))}$, i.e., that  $e^{-\flat x}\ast 0=x$ whenever $x\in \MC_{\mathfrak{g}}$. Once this is shown, the statement follows by noticing that this map is a right inverse to the map $\MC_{(\mathfrak{g}\hookrightarrow \cone(\mathrm{Id}_\mathfrak{g}))}\to \MC_{\mathfrak{g}}$ from Lemma \ref{isomorphisms.def}.
By definition of the gauge action, and since $[\flat x_1,[\flat x_2,x_3]]=[\flat x_1,\flat[x_2,x_3]]=0$ for any $x_1,x_2,x_3\in \mathfrak{g}$, we have
\[\begin{split}
e^{-\flat x}*0&=\sum_{n=0}^\infty \frac{[-\flat x, -]^n}{(n+1)!} (d(\flat x))=
\sum_{n=0}^1 \frac{[-\flat x, -]^n}{(n+1)!}\ (x-\flat dx)\\
&=x-\flat\left(\frac{1}{2}[x,x]+dx\right)=x\;.\end{split}\]
\end{proof}

\begin{remark} By general facts about $L_\infty$-algebras, there exists  $L_\infty$ right inverses of the morphism 
$TW(\mathfrak{g}_{}\hookrightarrow\cone(\mathrm{Id}_{\mathfrak{g}_{}}))\twoheadrightarrow \mathfrak{g}_{}$
 of Lemma~\ref{isomorphisms.def}. It is not difficult to prove that there exists a  right inverse that induces the morphism of Lemma~\ref{lem.sezionecartan} between Maurer-Cartan functors.
\end{remark}

\begin{definition}[\cite{FMperiods,algebraicBTT}]\label{def.cartanhomotopy}
Let $\mathfrak{g}_0$ and $\mathfrak{g}_1$ be two differential graded Lie algebras. An element 
\[ \bi\in \Hom_{\K}^{-1}(\mathfrak{g}_0,\mathfrak{g}_1),  \]
i.e., a linear map $\bi\colon \mathfrak{g}_0\to\mathfrak{g}_1$ of degree $-1$, is called a \emph{Cartan homotopy} if, for every $x,y\in \mathfrak{g}_0$, we
have
\[[\bi_x,\bi_{y}]_{\mathfrak{g}_1}=0,\qquad \bi_{[x,y]_{\mathfrak{g}_0}}=[\bi_x,d_{\mathfrak{g}_1}\bi_y].\]
The \emph{boundary} of a Cartan homotopy $\bi$ is the morphism of DG-vector spaces
\[ \bl=d_{\mathfrak{g}_1} \bi+\bi  d_{\mathfrak{g}_0}\colon \mathfrak{g}_0\to \mathfrak{g}_1.\]
\end{definition}
Clearly the boundary of a Cartan homotopy is null homotopic as a map of cochain complexes. Moreover, the Cartan identities satisfied by $\bi$ imply that its boundary $\bl$ is actually a morphism of DG-Lie algebras.
As the notation suggests, the prototypical example of a Cartan homotopy is the contraction of differential forms with vector fields on a smooth manifold \cite{cartan50}; 
the boundary morphism is the Lie derivative in this case. We refer to  \cite[Section 7]{IaconoDP} for a more exotic example in the framework of differential Batalin-Vilkovisky algebras.

\begin{remark}\label{rem.homotopy implica homotopyX A}
Let $\bi\colon \mathfrak{g}_0\to \mathfrak{g}_1[-1]$ be a Cartan homotopy with  boundary $\bl$ and let $\Omega$ be a
differential graded commutative algebra.  Then the maps
\begin{align*}
\bi\otimes \mathrm{Id}_\Omega\colon  \mathfrak{g}_0\otimes \Omega\to (\mathfrak{g}_1\otimes \Omega)[-1],&\qquad
(x \otimes \omega)\mapsto\bi_x \otimes \omega,\\
\mathrm{Id}_\Omega\otimes \bi\colon  \Omega\otimes \mathfrak{g}_0\to (\Omega\otimes \mathfrak{g}_1)[-1],&\qquad
(\omega \otimes x)\mapsto (-1)^{\bar{\omega}}\omega\otimes \bi_x,
\end{align*}
are  Cartan homotopies with boundaries $\bl\otimes \mathrm{Id}_\Omega$ and $\mathrm{Id}_\Omega\otimes \bl$ respectively. 
\end{remark}

\begin{lemma}\label{lem.universalCartanhomotopy} 
Let $\mathfrak{g}_0,\mathfrak{g}_1$ be two differential graded Lie algebras. Then the tautological linear embedding $\flat\colon \mathfrak{g}_0\to \mathfrak{g}_0[1]\hookrightarrow\cone(\mathrm{Id}_{\mathfrak{g}_0})$
induces a natural bijection between the set of differential graded Lie algebras morphisms
$\cone(\mathrm{Id}_{\mathfrak{g}_0})\xrightarrow{} \mathfrak{g}_1$ and the set of Cartan homotopies 
$\bi\colon\mathfrak{g}_0\xrightarrow{} \mathfrak{g}_1[-1]$.
Moreover, for any DG-Lie algebra morphism $\cone(\mathrm{Id}_{\mathfrak{g}_0})\xrightarrow{} \mathfrak{g}_1$,  the composition $\mathfrak{g}_0\hookrightarrow \cone(\mathrm{Id}_{\mathfrak{g}_0})\xrightarrow{}{\mathfrak{g}_1}$ is the boundary $\bl$ of the corresponding Cartan homotopy. 
\end{lemma}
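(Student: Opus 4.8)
The plan is to exhibit the correspondence explicitly and then check the required identities. First I would unwind the structure of $\cone(\mathrm{Id}_{\mathfrak{g}_0})=\mathfrak{g}_0\oplus\mathfrak{g}_0[1]$ as a DG-Lie algebra. A DG-Lie algebra morphism $\varphi\colon\cone(\mathrm{Id}_{\mathfrak{g}_0})\to\mathfrak{g}_1$ is determined by its restrictions to the two summands; since $\mathfrak{g}_0\hookrightarrow\cone(\mathrm{Id}_{\mathfrak{g}_0})$ must be a morphism of DG-Lie algebras, the restriction $\varphi|_{\mathfrak{g}_0}$ is forced to be a DG-Lie morphism, while the restriction $\varphi|_{\mathfrak{g}_0[1]}$ is a degree-preserving linear map on $\mathfrak{g}_0[1]$, i.e.\ a degree $-1$ map $\bi\colon\mathfrak{g}_0\to\mathfrak{g}_1$ via $\bi_x:=\varphi(\flat x)$. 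Thus I would \emph{define} the candidate bijection by sending $\varphi$ to $\bi$, with $\bi_x=\varphi(\flat x)$.

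The core of the argument is to show that the structure relations defining $\cone(\mathrm{Id}_{\mathfrak{g}_0})$ translate exactly into the Cartan homotopy axioms, and that $\varphi|_{\mathfrak{g}_0}$ is then forced to be $\bl$. The key steps are the following translations. From $[\flat x,\flat y]=0$ in $\cone(\mathrm{Id}_{\mathfrak{g}_0})$, applying $\varphi$ gives $[\bi_x,\bi_y]_{\mathfrak{g}_1}=0$, the first Cartan identity. From $d(\flat x)=x-\flat\,dx$ (the cone differential), applying the morphism $\varphi$ and using that $\varphi$ commutes with differentials yields $d_{\mathfrak{g}_1}\bi_x=\varphi(x)-\bi_{dx}$, that is $\varphi(x)=d_{\mathfrak{g}_1}\bi_x+\bi_{d_{\mathfrak{g}_0}x}=\bl(x)$; this simultaneously pins down $\varphi|_{\mathfrak{g}_0}$ as the boundary $\bl$ and gives the last sentence of the statement for free. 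Finally, from $[\flat x,y]=\flat[x,y]$, applying $\varphi$ and using $\varphi(\flat[x,y])=\bi_{[x,y]}$ together with $\varphi(y)=\bl(y)=d_{\mathfrak{g}_1}\bi_y+\bi_{d_{\mathfrak{g}_0}y}$, I would derive $\bi_{[x,y]_{\mathfrak{g}_0}}=[\bi_x,\bl(y)]$, which must be reconciled with $[\bi_x,d_{\mathfrak{g}_1}\bi_y]$: here $[\bi_x,\bi_{d_{\mathfrak{g}_0}y}]$ vanishes by the already-established first identity, leaving $\bi_{[x,y]_{\mathfrak{g}_0}}=[\bi_x,d_{\mathfrak{g}_1}\bi_y]$, the second Cartan identity.

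Conversely, given a Cartan homotopy $\bi$, I would define $\varphi$ on the two summands by $\varphi|_{\mathfrak{g}_0}=\bl$ and $\varphi(\flat x)=\bi_x$, and verify that this is a well-defined DG-Lie algebra morphism: compatibility with the differential is exactly the relation $d_{\mathfrak{g}_1}\bi_x=\bl(x)-\bi_{dx}$ (which is just the definition of $\bl$ rearranged), and compatibility with the bracket amounts to checking the three bracket relations $[\flat x,\flat y]$, $[\flat x,y]$, $[x,y]$ against $[\bi_x,\bi_y]=0$, the second Cartan identity, and the fact (noted after Definition~\ref{def.cartanhomotopy}) that $\bl$ is a DG-Lie morphism, respectively. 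That $\bl$ is a morphism of DG-Lie algebras is the only nontrivial ingredient, and it follows from the Cartan identities; I would either invoke that remark directly or reprove the Leibniz-type computation. The two constructions are manifestly inverse to one another, since each is determined by the value on $\flat x$, so this establishes the bijection.

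I expect the main obstacle to be purely bookkeeping rather than conceptual: keeping the Koszul signs straight when applying the degree $-1$ map $\flat$ and the degree $-1$ map $\bi$, and correctly using that $\varphi$ respects the differential when passing the cone relation $d(\flat x)=x-\flat\,dx$ through $\varphi$. The verification that $[\flat x,y]=\flat[x,y]$ corresponds to the second Cartan identity is the one place where I must carefully combine two of the cone relations with one of the Cartan identities, and this is where a sign error would most easily creep in.
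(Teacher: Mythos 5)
Your proof is correct and takes essentially the same route as the paper: the paper's proof just asserts that pullback along $\flat$ gives a bijection between chain maps $\cone(\mathrm{Id}_{\mathfrak{g}_0})\to\mathfrak{g}_1$ and elements of $\Hom^{-1}(\mathfrak{g}_0,\mathfrak{g}_1)$, and that $\bl+\bi[1]$ is a DG-Lie morphism if and only if $\bi$ is a Cartan homotopy with boundary $\bl$ — which is precisely the verification you carry out by matching the cone relations $d(\flat x)=x-\flat dx$, $[\flat x,y]=\flat[x,y]$, $[\flat x,\flat y]=0$ against the Cartan identities. Your appeal to the fact that $\bl$ is a DG-Lie morphism in the converse direction is legitimate, since the paper states this immediately after the definition of Cartan homotopy (and it indeed follows from the Cartan identities, e.g.\ by differentiating $[\bi_x,\bi_{dy}]=0$).
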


\begin{proof}   
The composition
$\mathfrak{g}_0\xrightarrow{\flat} \mathfrak{g}_0[1]\hookrightarrow\cone(\mathrm{Id}_{\mathfrak{g}_0})$ 
 of $\flat$ with the tautological embedding of $ \mathfrak{g}_0[1]$ into $\cone(\mathrm{Id}_{\mathfrak{g}_0})$
induces a pullback map  between the space 
of morphisms of DG-vector spaces $\mathfrak{g}_0\oplus \mathfrak{g}_0[1]\to \mathfrak{g}_1$ and $\Hom^{-1}(\mathfrak{g}_0,\mathfrak{g}_1)$.
It is completely straightforward to prove that this map is bijective and that 
$\bl+\bi[1]\colon \cone(\mathrm{Id}_{\mathfrak{g}_0})\to \mathfrak{g}_1$ is a morphism of DG-Lie algebras if and only if 
$\bi\colon \mathfrak{g}_0\to \mathfrak{g}_1[-1]$ is a Cartan homotopy with boundary $\bl$. 
\end{proof}

\begin{definition}\label{def.cartancalculus}
A \emph{Cartan calculus} is a commutative square   of 
differential graded Lie algebras  of the form:
\begin{equation*}\label{equ.cartancalculus} 
\xymatrix{\mathfrak{g}_{00}\ar@{^{ (}->}[d]\ar[r]^-{h}&\mathfrak{g}_{10}\ar[d]^{\chi}\\
\cone(\mathrm{Id}_{\mathfrak{g}_{00}})\ar[r]^-{\bl+\bi[1]}&\mathfrak{g}_{11}}
\end{equation*}
\end{definition}

By Lemma~\ref{lem.universalCartanhomotopy}, giving a Cartan calculus is the same as giving two morphisms of DG-Lie algebras $\mathfrak{g}_{00}\xrightarrow{\,h\,}\mathfrak{g}_{10}\xrightarrow{\,\chi\,}\mathfrak{g}_{11}$ and a Cartan homotopy
$\bi\colon \mathfrak{g}_{00}\to \mathfrak{g}_{11}$ with boundary $\bl=\chi h$.  For later use we point out that the composition 
\[ \mathfrak{g}_{00}\xrightarrow{\;\bi\;} \mathfrak{g}_{11}[-1]\xrightarrow{\;\text{projection}\;}\Coker(\mathfrak{g}_{1\bullet})[-1]\;.\]
is a morphism of complexes.

\begin{proposition}\label{def.from.cartan}
A Cartan calculus $\mathfrak{g}_{00}\xrightarrow{\,h\,}\mathfrak{g}_{10}\xrightarrow{\,\chi\,}\mathfrak{g}_{11}$ with Cartan homotopy
$\bi\colon \mathfrak{g}_{00}\to \mathfrak{g}_{11}$ induces a distinguished morphism 
\[
\mathfrak{g}_{00}\to hofib(\mathfrak{g}_{1\bullet})
\]
in the homotopy category of DG-Lie algebras, presented by the span of DG-Lie algebras
\[
\xymatrix{ \mathfrak{g}_{00}&TW({\begin{smallmatrix}\mathfrak{g}_{00}\hookrightarrow\cone(\mathrm{Id}_{\mathfrak{g}_{00}})\end{smallmatrix}})\ar@{->>}[l]_-{\sim}\ar[rr]^-{(h,\bl+\bi[1])}&&TW(\mathfrak{g}_{1\bullet})}.
\]
The associated morphism of deformation functors $\Def_{\mathfrak{g}_{00}}\to \Def_{\mathfrak{g}_{1\bullet}}$ is induced by the morphism of Maurer-Cartan functors
\begin{align*}
\MC_{\mathfrak{g}_{00}}&\to \MC_{\mathfrak{g}_{1\bullet}}\\
x&\mapsto (h(x),e^{-\bi_x}).
\end{align*}
\end{proposition}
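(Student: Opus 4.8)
The plan is to construct the morphism first at the level of DG-Lie algebras, using functoriality of the Thom--Whitney construction, and only afterwards read off its effect on Maurer--Cartan and deformation functors. By Lemma~\ref{lem.universalCartanhomotopy} the commutative square defining the Cartan calculus is precisely a morphism in $Fun(\Delta^1,\mathbf{DGLA})$ from the column $(\mathfrak{g}_{00}\hookrightarrow\cone(\mathrm{Id}_{\mathfrak{g}_{00}}))$ to the column $\mathfrak{g}_{1\bullet}=(\mathfrak{g}_{10}\xrightarrow{\chi}\mathfrak{g}_{11})$, with top component $h$ and bottom component $\bl+\bi[1]$. First I would apply $TW$ to this morphism of morphisms, producing the DG-Lie algebra map $(h,\bl+\bi[1])\colon TW(\mathfrak{g}_{00}\hookrightarrow\cone(\mathrm{Id}_{\mathfrak{g}_{00}}))\to TW(\mathfrak{g}_{1\bullet})$, which is the right leg of the span. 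The left leg is the acyclic fibration $TW(\mathfrak{g}_{00}\hookrightarrow\cone(\mathrm{Id}_{\mathfrak{g}_{00}}))\stackrel{\sim}{\twoheadrightarrow}\mathfrak{g}_{00}$ of Lemma~\ref{isomorphisms.def}, a weak equivalence and hence invertible in the homotopy category. Since $TW(\mathfrak{g}_{1\bullet})$ is a model for $hofib(\mathfrak{g}_{1\bullet})$, the span presents the asserted morphism $\mathfrak{g}_{00}\to hofib(\mathfrak{g}_{1\bullet})$.

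For the effect on deformation functors I would trace a Maurer--Cartan element $x\in\MC_{\mathfrak{g}_{00}}$ through the isomorphisms and maps the span induces. Combining Lemma~\ref{lem.sezionecartan} with the second morphism of Lemma~\ref{isomorphisms.def}, the inverse of the left-leg isomorphism $\Def_{TW(\mathfrak{g}_{00}\hookrightarrow\cone(\mathrm{Id}_{\mathfrak{g}_{00}}))}\xrightarrow{\cong}\Def_{\mathfrak{g}_{00}}$ is induced at the Maurer--Cartan level by $x\mapsto (x,e^{-t\flat x}\ast 0)$. Applying the right leg, and using that DG-Lie morphisms commute with the gauge action together with the identity $(\bl+\bi[1])(\flat x)=\bi_x$ supplied by Lemma~\ref{lem.universalCartanhomotopy}, this element is sent to $(h(x),e^{-t\bi_x}\ast 0)\in\MC_{TW(\mathfrak{g}_{1\bullet})}$. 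Finally, comparing with the normal form $(x_0^1,e^{tx_1^0}\ast 0)$ appearing in Lemma~\ref{lem.modellopiccolofibraomotopica}, this element is recognized as the image of $(h(x),e^{-\bi_x})\in\MC_{\mathfrak{g}_{1\bullet}}$, which yields the claimed formula $x\mapsto (h(x),e^{-\bi_x})$.

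As an independent and more transparent route to the same formula I would verify directly that $x\mapsto (h(x),e^{-\bi_x})$ is a well-defined natural transformation $\MC_{\mathfrak{g}_{00}}\to\MC_{\mathfrak{g}_{1\bullet}}$, i.e.\ that $e^{-\bi_x}\ast 0=\chi(h(x))=\bl(x)$ for $x\in\MC_{\mathfrak{g}_{00}}$. Expanding $e^{-\bi_x}\ast 0=\sum_{n\ge0}\frac{[-\bi_x,-]^n}{(n+1)!}(d\bi_x)$ and invoking the Cartan relations $\bi_{[x,x]}=[\bi_x,d\bi_x]$ and $[\bi_x,\bi_y]=0$, all terms with $n\ge 2$ vanish, leaving $d\bi_x-\tfrac12[\bi_x,d\bi_x]=d\bi_x-\tfrac12\bi_{[x,x]}=\bl(x)$, exactly as in the computation of Lemma~\ref{lem.sezionecartan}. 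Composed with the map $x\mapsto(x,e^{-\flat x})$ of that lemma and the Maurer--Cartan map of the square, this reproves the formula through the factorization $\Def_{\mathfrak{g}_{00}}\cong\Def_{(\mathfrak{g}_{00}\hookrightarrow\cone(\mathrm{Id}_{\mathfrak{g}_{00}}))}\to\Def_{\mathfrak{g}_{1\bullet}}$, which agrees with the span because both use the same acyclic fibration and the same $TW$ of the square.

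The main obstacle is the bookkeeping in the middle step: one must carefully push the $t$-dependent element $e^{-t\flat x}\ast 0$ through the tensored DG-Lie morphism $\mathrm{Id}\otimes(\bl+\bi[1])$ on $\Omega^\bullet(\Delta^1;-)$, keep track of the shift and sign implicit in $\bi[1]$, and match the outcome against the explicit isomorphism of Lemma~\ref{lem.modellopiccolofibraomotopica}. Everything else reduces to the functoriality of $TW$ and direct applications of the cited lemmas.
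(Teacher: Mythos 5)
Your proposal is correct and follows essentially the same route as the paper's proof: functoriality of the Thom--Whitney construction applied to the Cartan calculus square gives the span, and the identification of the induced map on Maurer--Cartan functors comes from combining Lemma~\ref{lem.modellopiccolofibraomotopica}, Lemma~\ref{isomorphisms.def}, and Lemma~\ref{lem.sezionecartan}, exactly as the paper does. The paper leaves this bookkeeping implicit while you spell it out (and add the direct verification that $e^{-\bi_x}\ast 0=\bl(x)$ for Maurer--Cartan $x$, which mirrors the computation already in Lemma~\ref{lem.sezionecartan}), but the underlying argument is the same.
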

\begin{proof}
By functoriality of the Thom-Whitney homotopy fibers 
we get the
morphism of differential graded Lie algebras 
\[
TW(\mathfrak{g}_{00}\hookrightarrow\cone(\mathrm{Id}_{\mathfrak{g}_{00}}))\to TW(\mathfrak{g}_{1\bullet})\,.
\]
The conclusion  follows from  Lemma~\ref{lem.modellopiccolofibraomotopica}, Lemma~\ref{isomorphisms.def}, and Lemma~\ref{lem.sezionecartan}.
\end{proof}

\begin{remark}\label{rem.cartanincohomologia} In the setup of Proposition~\ref{def.from.cartan}, the induced morphism in cohomology $H^*(\mathfrak{g}_{00})\to H^*(hofib(\mathfrak{g}_{1\bullet}))$ admits a simple description whenever
$\mathfrak{g}_{10}\to \mathfrak{g}_{11}$ is injective: via the isomorphism $H^*(hofib(\mathfrak{g}_{1\bullet}))\cong H^*(TW(\mathfrak{g}_{1\bullet}))\xrightarrow{\sim} H^*(\Coker(\mathfrak{g}_{1\bullet})[-1])$ of 
Lemma~\ref{lemma.tw-quotient}, the composite  map 
$H^*(\mathfrak{g}_{00})\to H^*(\Coker(\mathfrak{g}_{1\bullet})[-1])$ is induced by the morphism of complexes $\mathfrak{g}_{00}\xrightarrow{\;\bi\;}\Coker(\mathfrak{g}_{1\bullet})[-1]$.
\end{remark}

\begin{remark}\label{rem.morphism-of-Cartan-calculi-to-span}
One has an obvious notion of morphisms of Cartan calculi: they are commutative diagrams of DG-Lie algebras of the form
\[
\xymatrix@!0{
& \mathfrak{g}_{001} \ar[rrr]^-{}\ar'[d][dd]
& & &\mathfrak{g}_{101} \ar[dd]^{}
\\
\mathfrak{g}_{000} \ar[ur]^{\varphi}\ar[rrr]^-{}\ar[dd]
& & &\mathfrak{g}_{100} \ar[ur]\ar[dd]
\\
& \text{\phantom{ii}\small{cone}}(\mathrm{Id}_{\mathfrak{g}_{001}}) \ar'[rr]^-{}[rrr]
& &  &\mathfrak{g}_{111}
\\
\text{\small{cone}}(\mathrm{Id}_{\mathfrak{g}_{000}})\ar[rrr]\ar[ur]
& & &\mathfrak{g}_{110} \ar[ur]
 \ar@{..}(14.5,-24.5);(14.5,-24.5)^{\text{\tiny{cone}}(\varphi)}
 \ar@{..}(30,-15.5);(30,-15.5)^{}
 \ar@{..}(21,-29.5);(21,-29.5)^{}
 }
\]
The above construction is manifestly functorial with respect to 
morphisms of Cartan calculi. In particular, given a morphism of Cartan calculi as above, we get a natural morphism 
\[
hofib(\mathfrak{g}_{00\bullet})\to hofib^{[2]}(\mathfrak{g}_{1\bullet\bullet})\]
in the homotopy category of DG-Lie algebras, presented by the span of DG-Lie algebras
\[ \xymatrix{
TW(\mathfrak{g}_{00\bullet})
&TW^{[2]}\left( {\begin{matrix}
\mathfrak{g}_{000}&\longrightarrow&\mathfrak{g}_{001}\\
\downarrow&& \downarrow\\
\cone(\mathrm{Id}_{\mathfrak{g}_{000}})&\to&\text{cone}(\mathrm{Id}_{\mathfrak{g}_{001}})
\end{matrix}}
\right)
\ar@{->>}[l]_-{\sim}\ar[r]&TW^{[2]}(\mathfrak{g}_{1\bullet\bullet})}.
\]

\end{remark}

\begin{proposition}\label{prop.double-fiber-in-cohomology}
Let $hofib(\mathfrak{g}_{00\bullet})\to hofib^{[2]}(\mathfrak{g}_{1\bullet\bullet})$ be the morphism in the homotopy category of DG-Lie algebras induced by a morphism of Cartan calculi 
\[
\xymatrix@!0{
& \mathfrak{g}_{001} \ar[rrr]^-{h_1}\ar'[d][dd]
& & &\mathfrak{g}_{101} \ar[dd]^{}
\\
\mathfrak{g}_{000} \ar[ur]^{\varphi}\ar[rrr]^-{h_0}\ar[dd]
& & &\mathfrak{g}_{100} \ar[ur]\ar[dd]
\\
& \quad\cone(\mathrm{Id}_{\mathfrak{g}_{001}}) \ar'[rr]^-{}[rrr]
& &  &\mathfrak{g}_{111}
\\
{\small{\cone}}(\mathrm{Id}_{\mathfrak{g}_{000}})\ar[rrr]\ar[ur]
& & &\mathfrak{g}_{110} \ar[ur]
 \ar@{..}(15.8,-24.2);(15.8,-24.2)^{\cone(\varphi)}
 \ar@{..}(30,-15.5);(30,-15.5)^{}
 \ar@{..}(21,-29.3);(21,-29.3)^{\tiny{\bl_0+\bi_0[1]}}
 }
\]
as in Remark \ref{rem.morphism-of-Cartan-calculi-to-span}. If the morphism $\mathfrak{g}_{000}\to \mathfrak{g}_{001}$ is the inclusion of a DG-Lie subalgebra and $\mathfrak{g}_{1\bullet\bullet}$ is a pullback of inclusions of DG-Lie algebras, then the induced morphism at the cohomology level
\[
H^*(hofib(\mathfrak{g}_{00\bullet}))\to H^*(hofib^{[2]}(\mathfrak{g}_{1\bullet\bullet}))
\]
is naturally identified with the morphism
\begin{align*}
H^{*-1}(\mathfrak{g}_{001}/\mathfrak{g}_{000})&\to H^{*-2}(\mathfrak{g}_{111}/(\mathfrak{g}_{110}+\mathfrak{g}_{101}))\\
[x]&\mapsto [\bi_{1 \tilde{x}} \mod \mathfrak{g}_{110}+\mathfrak{g}_{101}]
\end{align*}
where $\tilde{x}$ is any representative of $[x]$ in $\mathfrak{g}_{001}$.
\end{proposition}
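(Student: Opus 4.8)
The plan is to compute the induced map directly on the explicit Thom--Whitney models furnished by Remark~\ref{rem.morphism-of-Cartan-calculi-to-span}, using the cocycle representatives of Remark~\ref{rem.representative} and the integration formula of Corollary~\ref{cor.cohomology-minus-2}. Recall that the morphism is presented by the span
\[
TW(\mathfrak{g}_{00\bullet})\xleftarrow{\ \sim\ }TW^{[2]}(\text{face})\longrightarrow TW^{[2]}(\mathfrak{g}_{1\bullet\bullet}),
\]
where $\text{face}$ denotes the square with rows $\mathfrak{g}_{000}\hookrightarrow\mathfrak{g}_{001}$ and $\cone(\mathrm{Id}_{\mathfrak{g}_{000}})\to\cone(\mathrm{Id}_{\mathfrak{g}_{001}})$. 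Unwinding the explicit quadruple description, the left leg is the acyclic fibration $p\colon(\omega_{00},\omega_{01},\omega_{10},\omega_{11})\mapsto(\omega_{00},\omega_{01})$ onto $TW(\mathfrak{g}_{00\bullet})$, while the right leg applies to coefficients the four ``depth'' maps of the cube, in particular $\bl_1+\bi_1[1]$ on the $\omega_{11}$-slot. Since $\mathfrak{g}_{000}\hookrightarrow\mathfrak{g}_{001}$ is an inclusion and $\mathfrak{g}_{1\bullet\bullet}$ is a pullback of inclusions, Lemma~\ref{lemma.tw-quotient} and Corollary~\ref{cor.cohomology-minus-2} identify the source and target cohomologies with $H^{*-1}(\mathfrak{g}_{001}/\mathfrak{g}_{000})$ and $H^{*-2}(\mathfrak{g}_{111}/(\mathfrak{g}_{110}+\mathfrak{g}_{101}))$ respectively, the latter via $(\omega_{00},\omega_{10},\omega_{01},\omega_{11})\mapsto\int_{\Delta^1\times\Delta^1}\omega_{11}\bmod(\mathfrak{g}_{110}+\mathfrak{g}_{101})$.

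I would fix $[x]\in H^{*-1}(\mathfrak{g}_{001}/\mathfrak{g}_{000})$ with representative $\tilde x\in\mathfrak{g}_{001}$, $d\tilde x\in\mathfrak{g}_{000}$. By Remark~\ref{rem.representative}, $(d\tilde x,d(u\tilde x))$ is a cocycle of $TW(\mathfrak{g}_{00\bullet})$ representing $[x]$, where $u$ denotes the first simplicial coordinate. The key step is to lift this along $p$ to a cocycle of $TW^{[2]}(\text{face})$. Writing $w$ for the second simplicial coordinate, I would set $\omega_{00}=d\tilde x$, $\omega_{01}=d(u\tilde x)$, $\omega_{10}=d(w\,\flat d\tilde x)=dw\,\flat d\tilde x+w\,d\tilde x$, and seek $\omega_{11}$ interpolating the prescribed faces $\omega_{11}|_{u=1}=\cone(\varphi)(\omega_{10})$, $\omega_{11}|_{w=1}=\omega_{01}$, $\omega_{11}|_{u=0}=\omega_{11}|_{w=0}=0$. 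The natural candidate $u\,dw\,\flat d\tilde x+w\,du\,\tilde x+uw\,d\tilde x$ has exactly these restrictions, but a short computation using $d(\flat y)=y-\flat dy$ shows it fails to be closed precisely by the top-degree term $du\,dw\,(\flat d\tilde x-\tilde x)$; adding the correction $du\,dw\,\flat\tilde x$, which alters no face, restores closedness. Checking that the quadruple $(\omega_{00},\omega_{01},\omega_{10},\omega_{11})$ lies in $TW^{[2]}(\text{face})$, is $d$-closed, and satisfies $p(\omega_{00},\omega_{01},\omega_{10},\omega_{11})=(d\tilde x,d(u\tilde x))$ is then routine.

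Next I would push this cocycle forward and read off its class via Corollary~\ref{cor.cohomology-minus-2}. Applying $\bl_1+\bi_1[1]$ to the coefficients of $\omega_{11}$ sends $\flat\tilde x\mapsto\bi_{1\tilde x}$ and the remaining terms either to values of $\bl_1$ or to $\bi_1$ of $d\tilde x$; among the four summands only $du\,dw\,\bi_{1\tilde x}$ is a de Rham $2$-form, so $\int_{\Delta^1\times\Delta^1}(\bl_1+\bi_1[1])_*\omega_{11}=\bi_{1\tilde x}$, with the normalization $\int_{\Delta^1\times\Delta^1}du\,dw=1$. Hence $[x]\mapsto[\bi_{1\tilde x}\bmod(\mathfrak{g}_{110}+\mathfrak{g}_{101})]$, as claimed. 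I would stress that this computation never uses $d\tilde x=0$, so it applies to \emph{every} class, not only to those lifting to $H^*(\mathfrak{g}_{001})$; this is what makes the direct chain-level chase preferable to a naturality argument via the long exact sequences of the two vertical homotopy fibers, which would only pin the map down on the image of $H^*(\mathfrak{g}_{001})\to H^*(\mathfrak{g}_{001}/\mathfrak{g}_{000})$.

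For completeness I would finally verify that the target map is well defined: commutativity of the bottom face of the cube gives $\bi_1|_{\mathfrak{g}_{000}}=\bi_0$ with values in $\mathfrak{g}_{110}$, so $\bi_1(\mathfrak{g}_{000})\subseteq\mathfrak{g}_{110}+\mathfrak{g}_{101}$; together with the fact, noted after Definition~\ref{def.cartancalculus}, that $\mathfrak{g}_{001}\xrightarrow{\bi_1}(\mathfrak{g}_{111}/\mathfrak{g}_{101})[-1]$ is a morphism of complexes, this shows $\tilde x\mapsto\bi_{1\tilde x}$ descends to a well-defined chain map $\mathfrak{g}_{001}/\mathfrak{g}_{000}\to(\mathfrak{g}_{111}/(\mathfrak{g}_{110}+\mathfrak{g}_{101}))[-1]$. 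The main obstacle is the explicit construction of the closed interpolant $\omega_{11}$ and the careful handling of the cone differential and of the de Rham and Koszul signs, so that integration extracts exactly $\bi_{1\tilde x}$; every other ingredient is a direct invocation of the results already established.
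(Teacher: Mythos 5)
Your proposal is correct and takes essentially the same route as the paper's own proof: the same span from Remark~\ref{rem.morphism-of-Cartan-calculi-to-span}, the same source cocycle $(d\tilde{x},d(t\,\tilde{x}))$ from Remark~\ref{rem.representative}, and the same evaluation of the pushed-forward quadruple by integration over $\Delta^1\times\Delta^1$ via Corollary~\ref{cor.cohomology-minus-2}, with only the $\bi_{1\tilde{x}}$-term surviving. The sole difference is cosmetic: where you build the closed interpolant $\omega_{11}$ by hand and then add the correction $du\,dw\,\flat\tilde{x}$, the paper obtains the identical cocycle in one stroke by applying the canonical section $y\mapsto (y,\,t\,y+dt\,\flat y)$ of Remark~\ref{rem.g-to-TW} componentwise to $(d\tilde{x},d(t\,\tilde{x}))$.
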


\begin{proof}
Since $\mathfrak{g}_{1\bullet0}$ and $\mathfrak{g}_{1\bullet1}$ are inclusions of sub-DG-Lie algebras, we have $h_0=\bl_0$ and $h_1=\bl_1$. Let $[x]$ be a cohomology class in $H^{*-1}(\mathfrak{g}_{001}/\mathfrak{g}_{000})$. By Remark \ref{rem.representative}, a representative for $[x]$ in $TW(\mathfrak{g}_{00\bullet})$ is $(d\tilde{x},d(t\, \tilde{x}))$, where $\tilde{x}$ is any representative of $[x]$ in $\mathfrak{g}_{001}$. By Remark \ref{rem.g-to-TW}, a representative for $[x]$ in 
$TW^{[2]}\left( {\begin{smallmatrix}
\mathfrak{g}_{000}&\longrightarrow&\mathfrak{g}_{001}\\
\downarrow&& \downarrow\\
\text{\tiny{cone}}(\mathrm{Id}_{\mathfrak{g}_{000}})&\to&\text{\tiny{cone}}(\mathrm{Id}_{\mathfrak{g}_{001}})
\end{smallmatrix}}
\right)$ is therefore given by 
\[(d\tilde{x}, t_0\, d\tilde{x}+dt_0 \flat d\tilde{x}, d(t_1\, \tilde{x}), t_0\, d(t_1\, \tilde{x})+dt_0 \flat d(t_1\, \tilde{x})),\] 
where $(t_0,t_1)$ are coordinates on $\Delta^1\times\Delta^1$. This is mapped to the element
\[
(d\bl_{0\tilde{x}}, t_0\, d\bl_{0\tilde{x}}+dt_0 \bi_{0 d\tilde{x}}, dt_1\, \bl_{1\tilde{x}}+t_1d\bl_{1\tilde{x}}, t_0\, dt_1\,\bl_{1 \tilde{x}}+t_1\,d\bl_{1 \tilde{x}}+dt_0 dt_1\bi_{1 \tilde{x}}+t_1dt_0 \bi_{1 d\tilde{x}})
\]
in $TW^{[2]}(\mathfrak{g}_{1\bullet\bullet})$. Finally, by Corollary \ref{cor.cohomology-minus-2}, this element is mapped to the element $\bi_{1 \tilde{x}} \mod \mathfrak{g}_{110}+\mathfrak{g}_{101}$ in $\mathfrak{g}_{111}/(\mathfrak{g}_{110}+\mathfrak{g}_{101})[-2]$.

\end{proof}

\bigskip
\section{The Jacobian square and the Jacobian DG-Lie algebra}\label{section.end.aff.etc}

The goal of this section is to define, for every cofibration  $F\hookrightarrow V$ in 
$\mathbf{DG}$, i.e., for every injective morphisms of DG-vector spaces, a commutative square  $\mathcal{J}(F\hookrightarrow V)\in Fun(\Delta^1\times \Delta^1,\mathbf{DGLA})$ in such a way that if $F\hookrightarrow V$ is weakly equivalent to $\tilde{F}\hookrightarrow \tilde{V}$ in 
$Fun(\Delta^1,\mathbf{DG})$, then $\mathcal{J}(F\hookrightarrow V)$ is weakly equivalent to $\mathcal{J}(\tilde{F}\hookrightarrow \tilde{V})$ in $Fun(\Delta^1\times \Delta^1,\mathbf{DGLA})$. Once this is achieved, 
one can associate to any morphism $F\to V$ in $Fun(\Delta^1,\mathbf{DG})$ a well defined object in the homotopy category of $Fun(\Delta^1\times \Delta^1,\mathbf{DGLA})$ by taking a factorization 
\[F\hookrightarrow V'\stackrel{\sim}{\twoheadrightarrow} V\]
and considering the homotopy class of the square $\mathcal{J}(F\hookrightarrow V')$. 
Then we define the DG-Lie algebra $\mathfrak{J}(V,F)$ as the double homotopy fibre of $\mathcal{J}(F\hookrightarrow V)$,  which controls in a natural way the local structure of the intermediate Jacobian of the  cofibration  $F\hookrightarrow V$.

For notational simplicity we shall identify cofibrations in $\mathbf{DG}$ with inclusions of DG-subspaces and will denote by $(V,F)$ a pair where $F$ is a DG-vector subspace of $V$. 
By the term \emph{pair of DG-vector spaces} we will always mean a pair consisting of a DG-vector space and of one of its DG-subspaces.\footnote{Maybe \emph{DG-flag} would be a more evocative name.} Thus 
a morphism of 
pairs $\xi\colon (V,F)\to (\tilde{V},\tilde{F})$ is a morphism of DG-vector spaces $\xi\colon V\to \tilde{V}$ such that 
$\xi(F)\subseteq \tilde{F}$; a morphism $\xi\colon (V,F)\to (\tilde{V},\tilde{F})$ is a weak equivalence if and only if both $\xi\colon V\to \tilde{V}$ and $\xi\colon F\to \tilde{F}$ are quasi-isomorphisms.  Also, in what follows we will write $\mathcal{J}(V,F)$ for $\mathcal{J}(F\hookrightarrow V)$.

\begin{notation}
For a DG-vector space $V$ and  $F_1,\ldots,F_n$ DG-subspaces of $V$ we set
 \[  \End^*(V)=\Hom^*_{\K}(V,V);\qquad \End^*(V;F_1,\ldots,F_n)=\{\xi\in \End^*(V)\mid \xi(F_i)\subseteq F_i\quad \forall i\}.\]
These are DG-Lie algebras with the usual commutator bracket of linear endomorphisms of graded vector spaces. We also set
\[ \Aff(V)=\{ \tilde{\xi}\in \End^*(V\oplus\K)\mid \tilde{\xi}(V\oplus\K)\subseteq V\}\]
and
\[ \Aff(V;F_1,\ldots,F_n)=\{ \tilde{\xi}\in \End^*(V\oplus\K)\mid \tilde{\xi}(V\oplus \K)\subseteq V,\; \tilde{\xi}(F_i\oplus\K)\subseteq F_i\quad \forall i\},\]
and call these the DG-Lie algebra of affine
endomorphisms of $V$ and of $(V;F_1,\ldots,F_n)$, respectively.
\end{notation}

\begin{remark}\label{rem.aff}
The DG-Lie algebra of affine
endomorphisms $\Aff(V)$ is canonically isomorphic to the differential graded vector space $\End^*(V)\oplus V$,  endowed with the Lie bracket 
\[ [(\xi,w),(\eta,u)]=[\xi,\eta]+(\xi(u)-(-1)^{\bar{\eta}\;\bar{w}}\eta(w))\;.\]
Namely, the pair $(\xi,w)\in \End^*(V)\oplus V$ corresponds to the endomorphism of $V\oplus\K$ represented by the matrix
\[
\left(
\begin{matrix} \xi & w\\
0 & 0
\end{matrix}
\right)
\]
Similarly, one sees that $\Aff(V;F_1,\ldots,F_n)$ is  canonically isomorphic to the sub-DG-Lie algebra $\End^*(V;F_1,\ldots,F_n)\oplus F_n$ of $\End^*(V)\oplus V$.
\end{remark}

\begin{remark}\label{rem.sezionidistorte} 
Notice that every degree zero closed element 
$v\in Z^0(V)$ determines a DG-Lie algebra section of the projection $\Aff(V)\to \End^*(V)$, given by  
\[\sigma_v\colon\End^*(V)\to\Aff(V),\qquad
\sigma_v(\xi)=(\xi,-\xi(v)).\]
This is the canonical identification of $\End^*(V)$ with the stabilizer of $v$ under the action of $\Aff(V)$ on $V$. In particular $\sigma_0$ is the canonical embedding of $\End^*(V)$ into $\Aff(V)$ given by $\xi\mapsto (\xi,0)$.
\end{remark}

\begin{definition}\label{def.jacobian-dgla}
For a pair of DG-vector spaces $(V,F)$, its Jacobian diagram is the  
commutative square  of differential graded Lie algebras 
\begin{equation}\label{equ.jacobiandiagram} 
\mathcal{J}(V,F)_{\bullet\bullet}\colon\qquad
\xymatrix{\End^*(V;F)\ar[d]_{\sigma_0}\ar[r]& \End^*(V)\ar[d]^{\sigma_0}\\
\Aff^*(V;F)\ar[r]& \Aff^*(V)}
\end{equation}
where
every morphism in the diagram is the natural inclusion.
The \emph{Jacobian DG-Lie algebra} $\mathfrak{J}(V,F)$ of the pair $(V,F)$ is the double
homotopy fiber of its Jacobian square:
\[
\mathfrak{J}(V,F):=hofib^{[2]}(\mathcal{J}(V,F)_{\bullet\bullet})
\]
\end{definition}

We now want to show that the Jacobian DG-Lie algebra $\mathfrak{J}(V,F)$ only depends on the homotopy equivalence class of the pair $(V,F)$. To this end it is useful to consider in $Fun(\Delta^1,\mathbf{DG})$ the direct Reedy model structure, i.e.,  one where the map $0\to 1$ has positive degree: in this structure a map  of DG-pairs $f\colon (V,F)\to (\tilde{V},\tilde{F})$ is a fibration (resp.: weak equivalence) 
if and only if  both $f\colon V\to \tilde{V}$ and $f\colon F\to \tilde{F}$ are surjective 
(resp.: quasi-isomorphisms).  

\begin{lemma}\label{for_Ken_Brown} Let $(V,F)\stackrel{\sim}{\twoheadrightarrow}(\tilde{V},\tilde{F})$ be a trivial fibration in the direct Reedy structure on $Fun(\Delta^1,\mathbf{DG})$.
Then $\mathcal{J}(V,F)_{\bullet\bullet}$ and $\mathcal{J}(\tilde{V},\tilde{F})_{\bullet\bullet}$ are homotopy equivalent.
\end{lemma}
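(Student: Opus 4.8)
The plan is to exhibit a single weak equivalence of commutative squares $\mathcal{J}(\tilde V,\tilde F)_{\bullet\bullet}\to\mathcal{J}(V,F)_{\bullet\bullet}$ in $Fun(\Delta^1\times\Delta^1,\mathbf{DGLA})$; by the description of homotopy equivalence in Remark~\ref{rem.reedy} this is more than enough. Write $\pi\colon(V,F)\twoheadrightarrow(\tilde V,\tilde F)$ for the given trivial fibration. The crucial first step is the observation that in the direct Reedy structure every pair $(W,G)$ with $G\hookrightarrow W$ injective is cofibrant, because its latching maps are $0\hookrightarrow G$ and $G\hookrightarrow W$, both injective, hence cofibrations in $\mathbf{DG}$. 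Applying the lifting axiom to $\pi$ against the cofibration $0\to(\tilde V,\tilde F)$ produces a section $s\colon(\tilde V,\tilde F)\to(V,F)$, that is, a chain map $s\colon\tilde V\to V$ with $s(\tilde F)\subseteq F$ and $\pi s=\Id_{\tilde V}$. The whole point of working in the category of pairs rather than in $\mathbf{DG}$ is precisely that this section is forced to respect the subspaces.

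Next I would use $\pi s=\Id$ to repair the non-functoriality of $\End$. Although $\End^*(V)=\Hom^*_\K(V,V)$ is not functorial in $V$, the assignment $\tilde\xi\mapsto s\tilde\xi\pi$ is multiplicative, since $s\tilde\xi\pi\,s\tilde\eta\pi=s\tilde\xi\tilde\eta\pi$; it therefore defines a DG-Lie morphism $\End^*(\tilde V)\to\End^*(V)$ which, carrying $\tilde F$-preserving maps to $F$-preserving maps, restricts to $\End^*(\tilde V;\tilde F)\to\End^*(V;F)$. Its affine analogue $(\tilde\xi,\tilde w)\mapsto(s\tilde\xi\pi,s\tilde w)$ is checked to be a DG-Lie morphism $\Aff(\tilde V)\to\Aff(V)$ (and $\Aff(\tilde V;\tilde F)\to\Aff(V;F)$) straight from the bracket formula of Remark~\ref{rem.aff}, again using $\pi s=\Id$. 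These four maps commute with the inclusions and with the sections $\sigma_0$ of Remark~\ref{rem.sezionidistorte}, so they assemble into a morphism of squares $\mathcal{J}(\tilde V,\tilde F)_{\bullet\bullet}\to\mathcal{J}(V,F)_{\bullet\bullet}$. It is essential that the morphism runs in this direction: the opposite assignment $\xi\mapsto\pi\xi s$ fails to be multiplicative, as $s\pi\neq\Id$ in general.

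It then remains to verify that each of the four components is a quasi-isomorphism. Since $\pi$ is a quasi-isomorphism and $s$ is a section of it, $s$ is a quasi-isomorphism too, and over the field $\K$ the functor $\Hom^*_\K(-,-)$ preserves quasi-isomorphisms in each variable; writing $\tilde\xi\mapsto s\tilde\xi\pi$ as precomposition with $\pi$ followed by postcomposition with $s$ shows $\End^*(\tilde V)\to\End^*(V)$ is a quasi-isomorphism, and $\Aff\cong\End\oplus V$ from Remark~\ref{rem.aff} then settles the full affine vertex as a direct sum. For the $F$-preserving vertices I would invoke the short exact sequences of complexes
\[0\to\End^*(V;F)\to\End^*(V)\xrightarrow{\ r\ }\Hom^*_\K(F,V/F)\to0\]
(and its tilde version), degreewise split because everything consists of all graded linear maps. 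The right-hand term is computed by $\Hom^*_\K(-,-)$ applied to the quasi-isomorphisms $\pi\vert_F\colon F\to\tilde F$ and the induced $\tilde V/\tilde F\to V/F$, hence the induced map on the quotients is again a quasi-isomorphism; the five lemma on the long exact cohomology sequences then forces $\End^*(\tilde V;\tilde F)\to\End^*(V;F)$, and with it $\Aff(\tilde V;\tilde F)\to\Aff(V;F)$, to be a quasi-isomorphism. This makes the morphism of squares a weak equivalence in the inverse Reedy structure, as required.

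The main obstacle here is conceptual rather than computational: because $\End$ and $\Aff$ are not functors of the pair $(V,F)$, homotopy invariance cannot be obtained by naive functoriality, and the real content of the lemma is the production of the section $s$ of pairs together with the remark that conjugation $\tilde\xi\mapsto s\tilde\xi\pi$ through a one-sided inverse is simultaneously a DG-Lie morphism and a quasi-isomorphism. Everything downstream is the routine bookkeeping that verifies the hypothesis of Ken Brown's lemma, which is why the statement is restricted to trivial fibrations.
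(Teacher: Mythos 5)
Your proof is correct, but it takes a genuinely different route from the paper's. The paper never chooses a section: it forms the kernels $I=\ker(\alpha\colon V\to\tilde V)$ and $I_F=\ker(\alpha|_F)$, which are acyclic, and builds an intermediate commutative square out of $\End^*(V;F,I)$ and $\End^*(V;I)$ (endomorphisms additionally preserving $I$) together with their affine versions; this square maps by inclusion into $\mathcal{J}(V,F)_{\bullet\bullet}$ and by the projection induced by $\alpha$ (well defined precisely because the endomorphisms preserve $I$, so they descend to $\tilde V\cong V/I$) onto $\mathcal{J}(\tilde V,\tilde F)_{\bullet\bullet}$, and both legs are shown to be componentwise quasi-isomorphisms by exhibiting explicit short exact sequences with acyclic third terms. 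So where you repair the non-functoriality of $\End^*$ by choosing a section $s$ and conjugating, $\tilde\xi\mapsto s\tilde\xi\pi$, the paper repairs it by restricting to the subalgebra of $I$-preserving endomorphisms, on which ``$\pi(-)s$'' becomes canonical and multiplicative. Your approach buys a single direct weak equivalence of squares instead of a zigzag, and it isolates exactly why the trivial-fibration hypothesis is the natural one (it is what guarantees a section in the category of pairs, via cofibrancy of pairs in the direct Reedy structure); the cost is the non-canonical choice of $s$ and a heavier reliance on the field hypothesis (quasi-isomorphisms of complexes of $\K$-vector spaces are homotopy equivalences, so $\Hom^*_{\K}$ preserves them in each variable). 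The paper's zigzag is choice-free and its quasi-isomorphism verifications are self-contained linear algebra with acyclic complexes. One step you pass over quickly — that the map $\tilde V/\tilde F\to V/F$ induced by $s$ is a quasi-isomorphism — does need the observation that $s|_{\tilde F}$ is a section of the quasi-isomorphism $\pi|_F$ (hence a quasi-isomorphism) followed by the five lemma, but this is routine; with that filled in, your argument is a complete and valid alternative proof, and it feeds into Ken Brown's lemma in Corollary~\ref{ken-brown} exactly as the paper's does.
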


\begin{proof}
If $\alpha\colon (V,F)\to (\tilde{V},\tilde{F})$ is a trivial fibration, then in particular both $\alpha\colon V\to \tilde{V}$ and $\alpha\colon F\to \tilde{F}$ are surjective quasi-isomorphisms. Then, denoting by $I$ and $I_F$ the kernels of $\alpha\colon V\to \tilde{V}$ and $\alpha\vert_F\colon F\to \tilde{F}$, respectively, we see that both $I$ and $I_F$ are acyclic.  
This implies that both 
the natural inclusion $\imath\colon \End(V;F,I)\to \End^*(V;F)$ and the natural projection 
$\pi\colon \End^*(V;F,I)\to \End^*(\tilde{V};\tilde{F})$ are quasi-isomorphisms.
Indeed
both
the complexes
\[ G_{\alpha}=\{f\in \Hom^*_{\K}(I,V/I)\mid f(I_F)\subseteq F/I_F\},\]
\[ R_{\alpha}=\{f\in \End^*(V;F)\mid f(V)\subseteq I\},\]
are acyclic,
since
$G_{\alpha}$ is the kernel of the surjective morphism of acyclic complexes
\[ \Hom^*_{\K}(I,V/I)\to \Hom^*_{\K}(I_F,V/(F+I)),\]
while $R_{\alpha}$ is the kernel of the surjective morphism of acyclic complexes
\[ \Hom^*_{\K}(V,I)\to \Hom^*_{\K}(F,I/I_F).\]
To conclude, it is sufficient to consider the 
short exact sequences of DG-vector spaces
\[ 0\to \End^*(V;F,I)\xrightarrow{\imath} \End^*(V;F)\to G_{\alpha}\to 0,\]
\[ 0\to R_{\alpha}\to \End^*(V;F,I)\xrightarrow{\pi} \End^*(\tilde{V};\tilde{F})\to 0.\]
In particular, by taking $F=\tilde{F}=0$, we see that  
both 
the inclusion $\imath\colon \End^*(V;I)\to \End^*(V)$ and the projection 
$\pi\colon \End^*(V;I)\to \End^*(\tilde{V})$ are quasi-isomorphisms.
It is now clear that the dashed arrows in the two diagrams below give weak equivalences of squares of differential graded Lie algebras

\[
\scalebox{.90}{
\xymatrix@1{
   &  \End^*(V;F,I)\ar[rr] \ar@{-->}[ddd]^(0.6){\imath} \ar[dl]  & & 
   \End^*(V;I) \ar@{-->}[ddd]^(0.6){\imath} \ar[dl] \\
   \End^*(V;F,I)\oplus F\ar[rr] \ar@{-->}[ddd]^(0.4){\imath\oplus\Id_F}& & \End^*(V;I)\oplus V \ar@{-->}[ddd]^(0.4){\imath\oplus\Id_V}& \\
   &  &  &  & \\
   & \End^*(V;F)\ar[rr] \ar[dl] & & \End^*(V) \ar[dl] \\
   \End^*(V;F)\oplus F \ar[rr] & & \End^*(V)\oplus V & }}\]

\[\scalebox{.90}{\xymatrix@1{
   &  \End^*(V;F,I)\ar[rr] \ar@{-->}[ddd]^(0.6){\pi} \ar[dl]  & & 
   \End^*(V;I) \ar@{-->}[ddd]^(0.6){\pi} \ar[dl] \\
   \End^*(V;F,I)\oplus F\ar[rr] \ar@{-->}[ddd]^(0.4){\pi\oplus \alpha}& & \End^*(V;I)\oplus V \ar@{-->}[ddd]^(0.4){\pi\oplus \alpha}& \\
   &  &  &  & \\
   & \End^*(\tilde{V};\tilde{F})\ar[rr] \ar[dl] & & \End^*(\tilde{V}) \ar[dl] \\
   \End^*(\tilde{V};\tilde{F})\oplus \tilde{F} \ar[rr] & & \End^*(\tilde{V})\oplus \tilde{V} & }}\]

\end{proof}

\begin{corollary}\label{ken-brown} If $(V,F)$ and $(\tilde{V},\tilde{F})$ are homotopy equivalent DG-pairs, then the DG-Lie algebras 
$\mathfrak{J}(V,F)$ and $\mathfrak{J}(\tilde{V},\tilde{F})$ are homotopy equivalent.
\end{corollary}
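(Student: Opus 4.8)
The plan is to reduce the statement about homotopy-equivalent DG-pairs to the trivial-fibration case already handled in Lemma~\ref{for_Ken_Brown}, using a standard Ken Brown-type argument. The key obstacle is that two DG-pairs being ``homotopy equivalent'' means they are connected by a zigzag of weak equivalences in $Fun(\Delta^1,\mathbf{DG})$, not by a single direct map, so I cannot apply Lemma~\ref{for_Ken_Brown} directly; I must first turn an arbitrary weak equivalence into a comparison of trivial fibrations.

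First I would recall that by Definition~\ref{def.jacobian-dgla} the assignment $(V,F)\mapsto \mathcal{J}(V,F)_{\bullet\bullet}$ is functorial in morphisms of DG-pairs, and that $\mathfrak{J}(V,F)=hofib^{[2]}(\mathcal{J}(V,F)_{\bullet\bullet})$ depends, by Proposition~\ref{homotopy-invariance-of-double-fiber}, only on the homotopy equivalence class of the commutative square $\mathcal{J}(V,F)_{\bullet\bullet}$ in $Fun(\Delta^1\times\Delta^1,\mathbf{DGLA})$. Thus it suffices to show that a weak equivalence of DG-pairs induces a homotopy equivalence of the associated Jacobian squares. Since homotopy equivalence of pairs is generated by weak equivalences, and homotopy equivalence of squares is generated by weak equivalences, it is enough to treat a single weak equivalence $\xi\colon (V,F)\xrightarrow{\sim}(\tilde V,\tilde F)$.

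The heart of the argument is the Ken Brown trick. Given such a $\xi$, I would factor it through the mapping-path object in $Fun(\Delta^1,\mathbf{DG})$ with its direct Reedy model structure: write $\xi$ as a cofibration (an injective map of DG-pairs) followed by a trivial fibration, or more symmetrically use the path object $P(\tilde V,\tilde F)$ to produce a DG-pair $(W,G)$ together with two \emph{trivial fibrations} $(W,G)\stackrel{\sim}{\twoheadrightarrow}(V,F)$ and $(W,G)\stackrel{\sim}{\twoheadrightarrow}(\tilde V,\tilde F)$ whose composite zig-zag recovers the homotopy class of $\xi$. This is possible because $Fun(\Delta^1,\mathbf{DG})$ with the direct Reedy structure is a genuine model category in which the cofibrations, fibrations and weak equivalences of pairs are as described before Lemma~\ref{for_Ken_Brown}. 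Applying Lemma~\ref{for_Ken_Brown} to each of these two trivial fibrations yields homotopy equivalences
\[
\mathcal{J}(V,F)_{\bullet\bullet}\ \simeq\ \mathcal{J}(W,G)_{\bullet\bullet}\ \simeq\ \mathcal{J}(\tilde V,\tilde F)_{\bullet\bullet}
\]
of commutative squares in $Fun(\Delta^1\times\Delta^1,\mathbf{DGLA})$.

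Finally I would conclude: by Proposition~\ref{homotopy-invariance-of-double-fiber} the double homotopy fiber sends homotopy-equivalent squares to homotopy-equivalent DG-Lie algebras, so $\mathfrak{J}(V,F)\simeq\mathfrak{J}(W,G)\simeq\mathfrak{J}(\tilde V,\tilde F)$, and by transitivity the same holds along any zigzag connecting $(V,F)$ to $(\tilde V,\tilde F)$. The main obstacle, as noted, is the passage from an arbitrary weak equivalence to a span of trivial fibrations; once the model structure on $Fun(\Delta^1,\mathbf{DG})$ is invoked and the path-object factorization is in place, everything else is formal. I expect the only subtlety to be checking that the factorization can be taken \emph{within} DG-pairs (so that both the ambient-space and subspace components are simultaneously trivial fibrations), which follows from the explicit description of direct Reedy fibrations and the fact that $\mathbf{DG}$ admits functorial path objects.
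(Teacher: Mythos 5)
Your proof is correct and is essentially the paper's own argument: the paper proves this corollary by citing Ken Brown's lemma \cite[1.1.12]{Hov99} together with Lemma~\ref{for_Ken_Brown} and Proposition~\ref{homotopy-invariance-of-double-fiber}, and what you have done is simply unfold the proof of Ken Brown's lemma (factoring a weak equivalence between fibrant objects into a span of trivial fibrations via a path-object construction). The one point you flag as a subtlety is automatic: every object of $Fun(\Delta^1,\mathbf{DG})$ is fibrant in the direct Reedy structure, since fibrations there are levelwise surjections, so the span of trivial fibrations of DG-pairs exists exactly as you describe.
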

\begin{proof}
Assume $(V,F)$ and $(\tilde{V},\tilde{F})$ are homotopy equivalent. 
Then, by Lemma \ref{for_Ken_Brown} and by the Ken Brown's lemma \cite[1.1.12]{Hov99}, we have that  $\mathcal{J}(V,F)_{\bullet\bullet}$ and $\mathcal{J}(\tilde{V},\tilde{F})_{\bullet\bullet}$ are homotopy equivalent commutative squares of DG-Lie algebras, and so by Proposition~\ref{homotopy-invariance-of-double-fiber}, the DG-Lie algebras $\mathfrak{J}(V,F)$ and $\mathfrak{J}(\tilde{V},\tilde{F})$ are homotopy equivalent.
\end{proof}

\begin{lemma}\label{lemma.J-easy} Let $(V,F)$ be a DG-pair. Then the DG-Lie algebra $\mathfrak{J}(V,F)$ is homotopy abelian. More precisely, $\mathfrak{J}(V,F)$ is homotopy equivalent to the complex $V/F[-2]$ equipped with trivial bracket.
\end{lemma}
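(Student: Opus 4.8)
The plan is to recognize the Jacobian square $\mathcal{J}(V,F)_{\bullet\bullet}$ as an instance to which both Corollary~\ref{cor.abelinitafibraquadrato} and Corollary~\ref{cor.cohomology-minus-2} apply, and then to combine the homotopy abelianness coming from the former with the cohomology computation coming from the latter. Writing the four corners as $\mathfrak{g}_{00}=\End^*(V;F)$, $\mathfrak{g}_{01}=\End^*(V)$, $\mathfrak{g}_{10}=\Aff^*(V;F)$ and $\mathfrak{g}_{11}=\Aff^*(V)$, the two structural facts I will extract from Remark~\ref{rem.aff} are: (i) the left vertical arrow $\sigma_0\colon \End^*(V;F)\to \Aff^*(V;F)$ is a \emph{split} inclusion of complexes, and (ii) the square is the fiber product of the two inclusions $\End^*(V)\hookrightarrow \Aff^*(V)$ and $\Aff^*(V;F)\hookrightarrow \Aff^*(V)$.

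For the homotopy abelianness, recall from Remark~\ref{rem.aff} that $\Aff^*(V;F)\cong \End^*(V;F)\oplus F$ as graded vector spaces, and that under this identification the differential is the direct sum of the commutator differential on $\End^*(V;F)$ and the internal differential of $F$, while $\sigma_0$ is the inclusion of the first summand. Hence $\sigma_0$ is a split monomorphism of cochain complexes, so the induced map $H^*(\End^*(V;F))\to H^*(\Aff^*(V;F))$, i.e. $H^*(\mathfrak{g}_{00})\to H^*(\mathfrak{g}_{10})$, is injective. Corollary~\ref{cor.abelinitafibraquadrato} then immediately gives that $\mathfrak{J}(V,F)=hofib^{[2]}(\mathcal{J}(V,F)_{\bullet\bullet})$ is homotopy abelian.

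For the precise homotopy type, I first check fact (ii): inside $\Aff^*(V)=\End^*(V)\oplus V$ one has $\End^*(V)=\End^*(V)\oplus 0$ and $\Aff^*(V;F)=\End^*(V;F)\oplus F$, so their intersection is exactly $\End^*(V;F)\oplus 0=\End^*(V;F)$, which is $\mathfrak{g}_{00}$; thus the square is a pullback of inclusions and Corollary~\ref{cor.cohomology-minus-2} applies. The same bookkeeping gives $\mathfrak{g}_{10}+\mathfrak{g}_{01}=\Aff^*(V;F)+\End^*(V)=\End^*(V)\oplus F$, whence
\[
\frac{\mathfrak{g}_{11}}{\mathfrak{g}_{10}+\mathfrak{g}_{01}}=\frac{\End^*(V)\oplus V}{\End^*(V)\oplus F}\cong \frac{V}{F}.
\]
Corollary~\ref{cor.cohomology-minus-2} then yields a natural isomorphism $H^*(\mathfrak{J}(V,F))\cong H^{*-2}(V/F)=H^*(V/F[-2])$.

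Finally, I would conclude by invoking the general principle that a homotopy abelian DG-Lie algebra is homotopy equivalent to the abelian DG-Lie algebra given by its cohomology endowed with zero differential and trivial bracket (over a field every complex is formal, and for abelian DG-Lie algebras a quasi-isomorphism of underlying complexes is automatically a morphism of DG-Lie algebras). Applying this to $\mathfrak{J}(V,F)$ and using the cohomology computation above, together with the fact that $V/F[-2]$ is itself abelian and hence homotopy equivalent to the same cohomology object $H^*(V/F)[-2]$, gives $\mathfrak{J}(V,F)\simeq V/F[-2]$ as claimed. The argument involves no genuine obstacle: the only real content is reading off properties (i) and (ii) from Remark~\ref{rem.aff}; the one mild subtlety is the last step, where one must pass from a mere isomorphism of cohomology to an actual homotopy equivalence of DG-Lie algebras, which is legitimate precisely because homotopy abelian DG-Lie algebras are determined up to homotopy by their cohomology.
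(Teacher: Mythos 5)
Your proposal is correct and follows essentially the same route as the paper: both establish homotopy abelianness by reading off from Remark~\ref{rem.aff} that $\sigma_0\colon\End^*(V;F)\to\Aff(V;F)$ is (split) injective in cohomology and invoking Corollary~\ref{cor.abelinitafibraquadrato}, and both then identify the cohomology with $H^{*-2}(V/F)$ before concluding via the fact that a homotopy abelian DG-Lie algebra is determined up to homotopy by its cohomology with trivial bracket. The only cosmetic difference is that you route the cohomology computation through Corollary~\ref{cor.cohomology-minus-2} (explicitly checking the square is a pullback of inclusions), whereas the paper cites Lemma~\ref{lem.hypercohomology} directly together with the quasi-isomorphism $\Aff(V)/(\Aff(V;F)+\End^*(V))\xrightarrow{\sim}V/F$; these amount to the same argument.
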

\begin{proof}
Via the isomorphism $\Aff(V;F)\cong \End^*(V;F)\oplus F$ of Remark \ref{rem.aff} it is immediate to see that the morphism $\End^*(V;F)\to \Aff(V,F)$ in the commutative square of DG-Lie algebras $\mathcal{J}(V,F)_{\bullet\bullet}$ is injective in cohomology. Therefore, $\mathfrak{J}(V,F)$ is homotopy abelian by Corollary~\ref{cor.abelinitafibraquadrato}. As such, $\mathfrak{J}(V,F)$ is weakly equivalent to its cohomology as a DG-vector space (endowed with the trivial bracket) and so, by Lemma \ref{lem.hypercohomology}, 
\[
\mathfrak{J}(V,F)\simeq \mathrm{Tot}(\mathcal{J}(V,F)_{\bullet\bullet})\simeq V/F[-2],
\]
where the quasi-isomorphism $\Aff(V)/(\Aff(V;F)+\End(V))\xrightarrow{\sim}V/F$ is given by $[(\xi,v)]\mapsto [v \mod F]$.
\end{proof}

The above lemma implies in particular that he DG-Lie algebra $TW^{[2]}(\mathcal{J}(V,F)_{\bullet\bullet})$ is homotopy equivalent to the abelian DG-Lie algebra $V/F[-2]$. In particular the deformation functor $\Def_{\mathfrak{J}(V,F)}$ is isomorphic to the smooth functor
\[ \Def_{V/F[-2]}\colon \Art\to \Set,\qquad A\mapsto \frac{V^{-1}}{F^{-1}}\otimes\mathfrak{m}_A\,.\]
It may be argued that germs of smooth moduli spaces and smooth deformation functors are uninteresting; on the contrary, maps between them may be extremely interesting. As we shall see later, the formal moduli space controlled by the double homotopy fiber of the Jacobian square is the target of some nontrivial maps called called \emph{formal Abel-Jacobi maps} and then every explicit DG-Lie representative, such as  
$V/F[-2]$ and $TW^{[2]}(\mathcal{J}(V,F)_{\bullet\bullet})$, gives an explicit description of these maps. 
Since there is no royal road to geometry, it is not surprising that simple representatives such as $V/F[-2]$ provide complicated descriptions of formal Abel-Jacobi maps, while the Thom-Whitney representative will provide a
simple description. A  nontrivial explicit $L_\infty$ quasi-isomorphism from 
$TW^{[2]}(\mathcal{J}(V,F)_{\bullet\bullet})$ to $V/F[-2]$ will be 
described in Appendix~\ref{app.iterati} by means of iterated integrals.
\medskip

We conclude this section by introducing two additional DG-Lie algebras which will be relevant for what follows.

\begin{definition} Let $(V,F)$ be a DG-pair. The \emph{Grassmannian} DG-Lie algebra $\mathfrak{Grass}(V;F)$ is defined as
\[
\mathfrak{Grass}(V;F):=TW(\End^*(V;F)\hookrightarrow\End^*(V)).
\]
The \emph{tautological bundle} DG-Lie algebra $\mathfrak{Q}(V;F)$ is defined as
\[
\mathfrak{Q}(V;F):=TW(\Aff(V;F)\hookrightarrow\Aff(V)).
\]
\end{definition}
\begin{remark}\label{rem.homotopy-invariance-grass}
The DG-Lie algebras $\mathfrak{Grass}(V;F)$ and $\mathfrak{Q}(V;F)$ are models for $hofib(\End^*(V;F)\hookrightarrow\End^*(V))$ and $hofib(\Aff(V;F)\hookrightarrow\Aff(V))$ respectively. Notice that the same argument as in the proof of Lemma \ref{for_Ken_Brown} and Corollary \ref{for_Ken_Brown} show that the homotopy type of $\mathfrak{Grass}(V;F)$ and $\mathfrak{Q}(V;F)$ only depends on the homotopy type of the DG-pair $(V,F)$.
Moreover, the Jacobian square induces an injective morphism of DG-Lie algebras $\mathfrak{Grass}(V;F)\hookrightarrow \mathfrak{Q}(V;F)$ giving a weak equivalence
\[
\mathfrak{J}(V,F)\simeq TW(\mathfrak{Grass}(V;F)\hookrightarrow\mathfrak{Q}(V;F)).
\]
Finally, the natural projections $\Aff(V)\to \End^*(V)$ and $\Aff(V;F)\to \End^*(V;F)$ induce a surjective DG-Lie algebra morphism $ \mathfrak{Q}(V;F)\twoheadrightarrow \mathfrak{Grass}(V;F)$ of which the above morphism  $\mathfrak{Grass}(V;F)\hookrightarrow \mathfrak{Q}(V;F)$ is a section.
\end{remark}

\begin{proposition}\label{prop.injectiveincohomologygrassabelian} 
If the inclusion $F\hookrightarrow V$ is injective in cohomology, then both the DG-Lie algebras 
$\mathfrak{Grass}(V;F)$ and $\mathfrak{Q}(V;F)$ are homotopy abelian. 
\end{proposition}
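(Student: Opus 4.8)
The plan is to reduce each of the two statements to the homotopy-abelianity criterion of Corollary~\ref{fiber-quasi-abelian}, which asserts that the homotopy fiber of a morphism that is injective in cohomology is homotopy abelian. Since $\mathfrak{Grass}(V;F)=TW(\End^*(V;F)\hookrightarrow\End^*(V))$ and $\mathfrak{Q}(V;F)=TW(\Aff(V;F)\hookrightarrow\Aff(V))$ are by definition Thom-Whitney models for the homotopy fibers of the two vertical inclusions in the Jacobian square, it suffices to show that each of these inclusions is injective in cohomology, under the hypothesis that $F\hookrightarrow V$ is injective in cohomology.

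First I would handle the Grassmannian case. The inclusion $\End^*(V;F)\hookrightarrow\End^*(V)$ fits into a short exact sequence of complexes whose cokernel is $\Hom^*_{\K}(F,V/F)$, since an endomorphism preserves $F$ precisely when the induced map $F\to V/F$ vanishes. Thus I would examine the long exact cohomology sequence and argue that $H^*(\End^*(V;F))\to H^*(\End^*(V))$ is injective as soon as the connecting map $H^{*-1}(\Hom^*_{\K}(F,V/F))\to H^*(\End^*(V;F))$ is zero, equivalently that the restriction map $H^*(\Hom^*_{\K}(V,V))\to H^*(\Hom^*_{\K}(F,V/F))$ is surjective. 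Passing to cohomology, $H^*(\Hom^*_{\K}(A,B))\cong\Hom^*_{\K}(H^*(A),H^*(B))$ for complexes of vector spaces over a field, so the relevant map is $\Hom^*(H^*(V),H^*(V))\to\Hom^*(H^*(F),H^*(V)/H^*(F))$; its surjectivity is exactly the statement that every linear map $H^*(F)\to H^*(V)/H^*(F)$ extends to a map $H^*(V)\to H^*(V)$, which holds precisely because $H^*(F)\hookrightarrow H^*(V)$ is injective and hence a split subspace. This is where the hypothesis $F\hookrightarrow V$ injective in cohomology enters decisively.

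Next I would treat the affine case. Using the canonical identification $\Aff(V;F)\cong\End^*(V;F)\oplus F$ and $\Aff(V)\cong\End^*(V)\oplus V$ of Remark~\ref{rem.aff}, the inclusion $\Aff(V;F)\hookrightarrow\Aff(V)$ splits as the direct sum of $\End^*(V;F)\hookrightarrow\End^*(V)$ and $F\hookrightarrow V$ at the level of the underlying graded vector spaces, compatibly with the differentials. Hence on cohomology it is the direct sum $H^*(\End^*(V;F))\oplus H^*(F)\to H^*(\End^*(V))\oplus H^*(V)$. Both summands are injective: the first by the Grassmannian argument just completed, the second by hypothesis. Therefore $\Aff(V;F)\hookrightarrow\Aff(V)$ is injective in cohomology as well, and Corollary~\ref{fiber-quasi-abelian} applies.

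The main obstacle is the Grassmannian step, specifically the identification of the connecting homomorphism and the extension property for maps out of $H^*(F)$; the affine step is then formal once the cohomological splitting of Remark~\ref{rem.aff} is invoked. I would expect the routine verification to consist in checking that the bracket-induced structure does not interfere with these purely cohomological injectivity statements, which it does not since homotopy abelianity is detected entirely by Corollary~\ref{fiber-quasi-abelian} at the level of the cohomology of the homotopy fiber.
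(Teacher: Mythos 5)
Your proposal is correct and follows essentially the same route as the paper: the paper likewise reduces both statements to Corollary~\ref{fiber-quasi-abelian} by noting that injectivity in cohomology of $F\hookrightarrow V$ forces injectivity in cohomology of both $\End^*(V;F)\hookrightarrow\End^*(V)$ and $\Aff(V;F)\hookrightarrow\Aff(V)$, compressing this step into a citation of the K\"unneth formula. Your short-exact-sequence argument for the endomorphism case and the direct-sum splitting of Remark~\ref{rem.aff} for the affine case are precisely the details that citation stands in for.
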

\begin{proof} By the K\"{u}nneth formula, if $F\hookrightarrow V$ is injective in cohomology then so are both $\End^*(V;F)\hookrightarrow\End^*(V)$ and $\Aff(V;F)\hookrightarrow\Aff(V)$. The conclusion is then immediate by Corollary~\ref{fiber-quasi-abelian}.
\end{proof}

\bigskip
\section{The (formal infinitesimal) homotopy Grassmannian}
\label{sec.grassmann}

We will now start investigating the infinitesimal deformation functors associated with the DG-Lie algebra $\mathfrak{Grass}(V;F)$. As the notation suggests, this will turn out to be the deformation functor describing the infinitesimal deformations of $F$ as a subcomplex of $V$: an infinitesimal and homotopy invariant version of the usual Grassmannian of deformations of a subspace of a given vector space. According to Lemma~\ref{lem.modellopiccolofibraomotopica}, to give a geometrical interpretation of the deformation functor $\Def_{\mathfrak{Grass}(V;F)}$ is the same as giving a geometric description of the functor
$\Def_{\End^*(V;F)\hookrightarrow\End^*(V)}$. It will be convenient to introduce the group-valued functors of infinitesimal automorphisms of a differential graded vector space, as follows.
\begin{definition}
Let $V$ be a differential graded vector space. The functors of Artin rings  
\[
\Aut_V,\Aut_V^0\colon\Art\to \mathbf{Grp}\] are defined as:
\[ \begin{split}\Aut_V(A)&=\{\text{automorphisms of the graded $A$-module $V\otimes A$ lifting the identity on $V$}\};\\
\Aut_V^0(A)&=\left\{\begin{array}{c}
\text{$A$-linear automorphisms of the complex $V\otimes A$, lifting the }\\
\text{identity  on $V$  and  inducing the identity in $H^*(V\otimes A)$} \end{array}\right\}.\end{split}\]
\end{definition}
\begin{remark}
Since the maximal ideal $\mathfrak{m}_A$ of a local Artin algebra $A$ is nilpotent, we can write
\[ \Aut_V(A)=\{ \Id_{V\otimes A}-g\,,\, \text{with $g\colon V\otimes A\to V\otimes\mathfrak{m}_A$ a degree $0$ $A$-linear map}\}\]
where $g$ is any $A$-linear map $g\colon V\otimes A\to V\otimes\mathfrak{m}_A$ of degree $0$.
Notice that for such a $g$ we can write $\Id_{V\otimes A}-g=e^a$ with
\[a=\log(\Id_{V\otimes A}-g)=-\sum_{n=1}^{\infty}\frac{g^n}{n}\] 
in $\End^0(V)\otimes\mathfrak{m}_A$.
\end{remark}

\begin{lemma}\label{serve.anche.questo}
Let $V$ be a differential graded vector space and let $\xi\in \End^0(V)\otimes\mathfrak{m}_A$, then:

\begin{enumerate} 

\item $e^\xi(F\otimes A)=F\otimes A$ if and only if $\xi\in 
\End^0(V;F)\otimes\mathfrak{m}_A$;

\item $e^\xi$ is a morphism of complexes if and only if $d\xi=0$;

\item $e^\xi\in \Aut^0_V(A)$  if and only if $\xi=d\zeta$ for some $\zeta\in \End^{-1}(V)\otimes\mathfrak{m}_A$.
\end{enumerate}
\end{lemma}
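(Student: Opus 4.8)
The plan is to treat all three equivalences through the correspondence $\xi\leftrightarrow e^\xi$, exploiting that $\mathfrak{m}_A$ is nilpotent. Since $\xi\in\End^0(V)\otimes\mathfrak{m}_A$ and $\mathfrak{m}_A^N=0$ for some $N$, both $e^\xi=\sum_{n\ge 0}\xi^n/n!$ and $\log(e^\xi)=\sum_{n\ge 1}\frac{(-1)^{n+1}}{n}(e^\xi-\Id)^n$ are finite sums, and $\log(e^\xi)=\xi$. The guiding observation, which powers (1) and (2), is that $\xi$ is a polynomial without constant term in $e^\xi-\Id$ while, conversely, $e^\xi-\Id$ is a polynomial in $\xi$; hence any $\K$-linear subspace of $V\otimes A$ stable under $e^\xi$, and any $A$-linear operator commuting with $e^\xi$, is automatically stable under (resp. commutes with) $\xi$, and vice versa.

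For (1), if $\xi\in\End^0(V;F)\otimes\mathfrak{m}_A$ then $\xi(F\otimes A)\subseteq F\otimes A$, so the same holds for $e^{\pm\xi}$, and since $e^{-\xi}$ is the inverse of $e^\xi$ we get $e^\xi(F\otimes A)=F\otimes A$. Conversely, $e^\xi(F\otimes A)=F\otimes A$ forces $(e^\xi-\Id)(F\otimes A)\subseteq F\otimes A$, hence every power of $e^\xi-\Id$, and therefore $\xi=\log(e^\xi)$, preserves $F\otimes A$; choosing a $\K$-basis $\{m_\alpha\}$ of $\mathfrak{m}_A$ and writing $\xi=\sum_\alpha \xi_\alpha\otimes m_\alpha$ one reads off $\xi_\alpha(F)\subseteq F$, i.e. $\xi\in\End^0(V;F)\otimes\mathfrak{m}_A$. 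For (2), $e^\xi$ is a morphism of complexes exactly when it commutes with the differential $d$ of $V\otimes A$; by the same polynomial interchangeability $e^\xi$ commutes with $d$ if and only if $\xi$ does, and $[d,\xi]=0$ is precisely $d\xi=0$. (Alternatively one could expand $e^\xi d e^{-\xi}-d=-\frac{e^{\operatorname{ad}_\xi}-\Id}{\operatorname{ad}_\xi}(d\xi)$ and invoke invertibility of the fraction, $\operatorname{ad}_\xi$ being nilpotent, but the logarithm argument is shorter.)

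For (3), recall that $e^\xi\in\Aut_V^0(A)$ means that $e^\xi$ is an automorphism of the complex $V\otimes A$ inducing the identity on $H^*(V\otimes A)$; by (2) the first condition is $d\xi=0$. If $\xi=d\zeta$ then $d\xi=0$, and since a coboundary of $\End^*(V)\otimes A$ sends cocycles to coboundaries, $\xi$ induces the zero endomorphism $\overline{\xi}$ of $H^*(V\otimes A)$; as the passage to cohomology is an algebra homomorphism we have $[e^\xi]=e^{\overline{\xi}}=\Id$, so $e^\xi\in\Aut_V^0(A)$. Conversely, assuming $e^\xi\in\Aut_V^0(A)$ we get $d\xi=0$ and $e^{\overline{\xi}}=[e^\xi]=\Id$; since $\overline{\xi}$ is nilpotent (its entries lie in $\mathfrak{m}_A$) we may take logarithms to conclude $\overline{\xi}=0$.

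The main obstacle is the final step of the converse in (3): deducing from $\overline{\xi}=0$ that $\xi$ is an honest coboundary $d\zeta$ with $\zeta\in\End^{-1}(V)\otimes\mathfrak{m}_A$. Here I would use the K\"unneth isomorphism $H^0(\End^*(V)\otimes A)\cong \End^0_A\!\big(H^*(V\otimes A)\big)$, valid since we work over a field, which identifies the class $[\xi]$ with $\overline{\xi}=0$ and hence exhibits $\xi$ as a coboundary in $\End^*(V)\otimes A$. Finally, the splitting $A=\K\oplus\mathfrak{m}_A$ of $\K$-vector spaces decomposes $\End^*(V)\otimes A$ as a direct sum of complexes; as $\xi$ has trivial $\K$-component, a primitive can be chosen inside $\End^{-1}(V)\otimes\mathfrak{m}_A$, which yields the required $\zeta$.
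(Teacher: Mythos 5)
Your proof is correct and follows essentially the same route as the paper's: the paper also treats (1) and (2) as immediate consequences of the nilpotent $\exp/\log$ correspondence, and for (3) it reduces ``$e^\xi$ induces the identity on cohomology'' to ``$\xi$ induces zero on cohomology'' via the factorization $e^\xi-\Id=\frac{e^\xi-\Id}{\xi}\,\xi$ with $\frac{e^\xi-\Id}{\xi}$ invertible, before concluding by K\"unneth. Your variant of that step (passing to cohomology, using $\overline{e^\xi}=e^{\overline{\xi}}$ and taking the logarithm of a unipotent operator) is the same invertible-power-series trick in a slightly different guise, and your explicit handling of the $\mathfrak{m}_A$-refinement of the primitive $\zeta$ just spells out what the paper leaves implicit.
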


\begin{proof} Only the last item is nontrivial. We have $e^\xi\in \Aut^0_V(A)$ if and only if $e^\xi-\Id$.
is the zero morphism in cohomology. By writing 
\[ e^\xi-\Id=\frac{e^\xi-\Id}{\xi} \xi\]
and noticing that $\frac{e^\xi-\Id}{\xi}$ is an isomorphism, we see that
$e^\xi\in \Aut^0_V(A)$ if and only if $\xi\colon V\otimes A\to V\otimes A$ is zero in cohomology and the conclusion follows by K\"{u}nneth formulas.
\end{proof}

\begin{definition}
For any DG-pair $(F,V)$ and for any local Artin algebra $A$, we denote by $S_{V,F}(A)$ the set
\[ S_{V,F}(A)=\{F_A\mid F_A\subseteq V\otimes A\; \text{subcomplex of flat $A$-modules such that}\; 
F_A\otimes_A\K=F\}.\]
\end{definition}
\begin{lemma}For every morphism $A\to B$ in $\mathbf{Art}$, the map $F_A\mapsto F_A\otimes_A B$ defines a morphism $S_{V,F}(A)\to S_{V,F}(B)$ making  $S_{V,F}\colon \Art\to \Set$  a functor of Artin rings.
\end{lemma}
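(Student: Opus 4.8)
The plan is to prove well-definedness of the assignment $F_A\mapsto F_A\otimes_A B$ first, and then read off functoriality from the standard transitivity of base change. Fix $F_A\in S_{V,F}(A)$ and a morphism $A\to B$ in $\Art$. The only nontrivial point is that $F_A\otimes_A B$ genuinely sits inside $V\otimes B$: the candidate subcomplex $F_B$ is the image of the base-change map
\[
F_A\otimes_A B\to (V\otimes A)\otimes_A B= V\otimes B,
\]
and everything hinges on this map being \emph{injective}. Once injectivity is established, $F_B$ is automatically a flat $B$-submodule (flatness is preserved by base change) which is closed under $d_V\otimes\Id_B$ (the differential of $V\otimes B$ restricts, through the map above, to the differential transported from the subcomplex $F_A$), hence a subcomplex; and $F_B\otimes_B\K= F_A\otimes_A\K= F$ by transitivity of base change together with the compatibility $A\to B\to\K$ of residue fields. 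Thus $F_B\in S_{V,F}(B)$.

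\textbf{The main obstacle} is precisely the injectivity of the base-change map, and I would isolate it in the following form: the quotient $Q_A:=(V\otimes A)/F_A$ is flat over $A$. This is where the hypotheses on $A$ and on $F_A$ are used. Because $\mathfrak{m}_A$ is nilpotent, the local criterion for flatness (Bourbaki, \emph{Commutative Algebra} III.5.2, in the case of a nilpotent ideal, which requires no finiteness assumption on the module) reduces the flatness of $Q_A$ over $A$ to the single vanishing $\operatorname{Tor}_1^A(\K,Q_A)=0$. I would extract this vanishing from the long exact $\operatorname{Tor}$-sequence of $0\to F_A\to V\otimes A\to Q_A\to 0$ tensored with $\K$:
\[
0=\operatorname{Tor}_1^A(\K,V\otimes A)\to \operatorname{Tor}_1^A(\K,Q_A)\to F_A\otimes_A\K\xrightarrow{\ \lambda\ } V,
\]
where $\operatorname{Tor}_1^A(\K,V\otimes A)$ vanishes because $V\otimes A$ is a free $A$-module, and where the reduction map $\lambda$ is injective: by definition of $S_{V,F}$ the identity $F_A\otimes_A\K= F$ means that $\lambda$ is the inclusion $F\hookrightarrow V$. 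Exactness then forces $\operatorname{Tor}_1^A(\K,Q_A)=0$, so $Q_A$ is flat.

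With $Q_A$ flat we have $\operatorname{Tor}_1^A(Q_A,B)=0$ for every $A$-algebra $B$, so tensoring $0\to F_A\to V\otimes A\to Q_A\to 0$ with $B$ keeps $F_A\otimes_A B\hookrightarrow V\otimes B$ injective, which is exactly the injectivity required above; this completes the proof of well-definedness. Finally I would verify functoriality using the image model for $F_B$ inside $V\otimes B$: the identity morphism gives $F_A\otimes_A A=F_A$, and for a composite $A\to B\to C$ the transitivity isomorphism $(F_A\otimes_A B)\otimes_B C\cong F_A\otimes_A C$, being compatible with the maps into $V\otimes C$, identifies the two resulting subcomplexes of $V\otimes C$ on the nose. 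Hence $S_{V,F}$ is a functor. I expect the flatness of $Q_A$ to be the only step requiring genuine input; the remaining verifications are routine base-change bookkeeping.
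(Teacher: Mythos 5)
Your proof is correct and follows essentially the same route as the paper: both isolate the flatness of the quotient $(V\otimes A)/F_A$ as the key point, establish it by tensoring the tautological exact sequence $0\to F_A\to V\otimes A\to (V\otimes A)/F_A\to 0$ with $\K$ and using the injectivity of $F=F_A\otimes_A\K\hookrightarrow V$ together with the local flatness criterion over the Artin local ring $A$, and then deduce the injectivity of $F_A\otimes_A B\to V\otimes B$ from the resulting vanishing of $\operatorname{Tor}^A_1((V\otimes A)/F_A,B)$. The only difference is expository: you spell out the routine verifications (subcomplex property, residue condition, transitivity of base change) that the paper leaves implicit.
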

\begin{proof}
The functoriality of $S_{V,F}$ is an easy consequence of the flatness criteria for modules over a local Artin rings. Recall that an $A$-module is flat if and only if it is free. Applying the functor $-\otimes_A\K$ to the exact sequences 
\begin{equation}\label{equ.tautologicograssmann}
0\to F_A^i\to V^i\otimes A\to \frac{V^i\otimes A}{F_A^i}\to 0,\qquad i\in \Z,
\end{equation}
we get $\operatorname{Tor}_1^A(V^i\otimes A/F_A^i,\K)=0$ and then also $V^i\otimes A/F_A^i$ is a flat $A$-module. Thus, for every morphism $A\to B$ in $\Art$ we have 
$\operatorname{Tor}_j^A(V^i\otimes A/F_A,B)=0$ for every $j>0$ and then  
$F_A\otimes_A B\subseteq V\otimes B$ is a subcomplex of flat $B$-modules.
\end{proof}

\begin{definition}
For any DG-pair $(F,V)$ we denote by $Grass_{V,F}\colon \Art\to \Set$ the quotient functor
\[  Grass_{V,F}=S_{V,F}/\Aut^0_V,
\]
where the $\Aut^0_V$-action on $S_{V,F}$ is the
natural left action 
\[ \Aut^0_V\times S_{V,F}\to S_{V,F},\qquad (e^a,F_A)\mapsto e^a(F_A).\]
\end{definition}
\begin{remark}
When $V=V^0$ is a $\mathbb{K}$ vector space (seen as a differential graded vector space concentrated in degree zero) and $F=F^0$ is a linear subspace of $V$, then the functor  $Grass_{V,F}$ is the usual Grassmann functor of 
deformations of $F$ inside $V$. More generally if the differential of $V$ is trivial, then 
$Grass_{V,F}$ is the product of the Grassmann functors $Grass_{V^i,F^i}$.
\end{remark}

\begin{lemma}\label{first.phi}
Let $(V,F)$ be a DG-pair. The map $\phi_A\colon e^\xi\mapsto e^{-\xi}(F\otimes A)$, with $A$ a local Artin algebra and $e^a\in \exp(\End^0(V)\otimes \mathfrak{m}_A)$, defines a natural transformation of functors
\[
\phi\colon \MC_{\End^*(V;F)\hookrightarrow\End^*(V)}\to S_{V,F}.
\]
\end{lemma}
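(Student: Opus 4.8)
The plan is to unwind the definition of the Maurer--Cartan functor of the homotopy fiber and to show that its defining condition translates precisely into the statement that $e^{-\xi}(F\otimes A)$ is a subcomplex of $V\otimes A$. Since $\End^*(V;F)\hookrightarrow \End^*(V)$ is injective, Remark~\ref{rem.injective} gives
\[
\MC_{\End^*(V;F)\hookrightarrow\End^*(V)}(A)=\{e^{\xi}\mid \xi\in \End^0(V)\otimes\mathfrak{m}_A,\ e^{\xi}\ast 0\in \End^1(V;F)\otimes\mathfrak{m}_A\}.
\]
The first step is to record the standard identity $e^{\xi}\,d\,e^{-\xi}-d=e^{\xi}\ast 0$, where $d$ is the differential of $V$ viewed as a degree $1$ operator on $V\otimes A$ and the differential of $\End^*(V)$ is the graded commutator $[d,-]$. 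This is a direct term-by-term check: expanding $e^{\xi}\,d\,e^{-\xi}=\sum_{n\ge 0}\frac{[\xi,-]^n}{n!}(d)$ and using $[\xi,d]=-d\xi$ reproduces exactly the gauge formula $\sum_{n\ge 0}\frac{[\xi,-]^n}{(n+1)!}(-d\xi)$ recalled in Section~\ref{sec.cartanandperiods}. Note that $e^{-\xi}$ is a genuine $A$-linear automorphism of $V\otimes A$, since $\mathfrak{m}_A$ is nilpotent and the exponential series terminates.

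Next I would verify that $F_A:=e^{-\xi}(F\otimes A)$ lies in $S_{V,F}(A)$. Flatness is immediate: $F_A$ is the image of the free $A$-module $F\otimes A$ under an $A$-linear automorphism, hence is itself free. Reducing modulo $\mathfrak{m}_A$ the automorphism $e^{-\xi}$ becomes the identity, so $F_A\otimes_A\K=F$. The only substantive point, and the heart of the argument, is that $F_A$ is a subcomplex, i.e.\ $d(F_A)\subseteq F_A$. Applying $e^{\xi}$, this is equivalent to $e^{\xi}\,d\,e^{-\xi}(F\otimes A)\subseteq F\otimes A$, and by the identity above $e^{\xi}\,d\,e^{-\xi}=d+(e^{\xi}\ast 0)$. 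Here $d$ preserves $F\otimes A$ because $F$ is a subcomplex of $V$, while $e^{\xi}\ast 0$ preserves $F\otimes A$ precisely because it lies in $\End^1(V;F)\otimes\mathfrak{m}_A$, which is exactly the Maurer--Cartan condition. Thus the sum preserves $F\otimes A$, so $F_A$ is $d$-closed and $\phi_A$ takes values in $S_{V,F}(A)$.

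Finally, for naturality along a morphism $A\to B$ in $\Art$, I would simply observe that $A$-linearity of $e^{-\xi}$ yields $e^{-\xi_B}=e^{-\xi}\otimes_A B$ on $V\otimes B=(V\otimes A)\otimes_A B$, together with $(F\otimes A)\otimes_A B=F\otimes B$, so that
\[
\phi_B(e^{\xi_B})=e^{-\xi_B}(F\otimes B)=\bigl(e^{-\xi}(F\otimes A)\bigr)\otimes_A B=\phi_A(e^{\xi})\otimes_A B,
\]
which is precisely the functoriality map $S_{V,F}(A)\to S_{V,F}(B)$ applied to $\phi_A(e^{\xi})$. Hence $\phi$ is a natural transformation. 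The only real work is the subcomplex verification in the second step; everything else is formal.
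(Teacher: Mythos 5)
Your proof is correct and is essentially the paper's own argument: the paper likewise reduces via Remark~\ref{rem.injective} to showing $e^{-\xi}(F\otimes A)$ is a subcomplex, and settles this by the same identity $e^{\xi}\,d\,e^{-\xi}-d=e^{\xi}\ast 0$ combined with the Maurer--Cartan condition. The flatness, reduction mod $\mathfrak{m}_A$, and naturality checks you spell out are left implicit in the paper, but they are exactly the routine verifications one would expect.
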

\begin{proof}
By Remark \ref{rem.injective}, we need to show that, if $e^\xi$ is such that $e^\xi*0\in \End^1(V;F)\otimes \mathfrak{m}_A$, then $e^{-\xi}(F\otimes A)$ is a subcomplex of $V\otimes A$. This is immediate since $e^{-\xi}(F\otimes A)$ is a subcomplex of $V\otimes A$ if and only if $de^{-\xi}(F\otimes A)\subseteq e^{-\xi}(F\otimes A)$ and so if and only if $e^\xi de^{-\xi}(F\otimes A)\subseteq F\otimes A$. This last condition can be rephrased by saying that one wants $e^\xi de^{-\xi}-d$ to be an element in $\End^1(V;F)\otimes \mathfrak{m}_A$, and then one concludes by noticing that $e^\xi de^{-\xi}-d=e^\xi*0$.
\end{proof}

\begin{lemma}\label{second.phi}
Let $(V,F)$ be a DG-pair. The natural transformation $\phi$ of Lemma \ref{first.phi} induces an isomorphism
\[
\phi_A\colon  \frac{\MC_{\End^*(V;F)\hookrightarrow\End^*(V)}(A)}{\exp(\End^0(V;F)\otimes \mathfrak{m}_A)}\xrightarrow{\;\sim\;} S_{V,F}(A)\,,
\]
for any local Artin algebra $A$.
\end{lemma}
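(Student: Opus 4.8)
The plan is to work with the explicit group-theoretic description of the Maurer-Cartan functor provided by Remark~\ref{rem.injective}: since $\End^*(V;F)\hookrightarrow\End^*(V)$ is injective, a point of $\MC_{\End^*(V;F)\hookrightarrow\End^*(V)}(A)$ is a single element $e^\xi\in\exp(\End^0(V)\otimes\mathfrak{m}_A)$ subject to $e^\xi\ast 0\in\End^1(V;F)\otimes\mathfrak{m}_A$, and the restriction to $\exp(\End^0(V;F)\otimes\mathfrak{m}_A)$ of the gauge action recalled before Lemma~\ref{lem.modellopiccolofibraomotopica} is simply left translation $e^\xi\mapsto e^a e^\xi$, for $a\in\End^0(V;F)\otimes\mathfrak{m}_A$. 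With this in hand, the first step is to check that $\phi_A$ is constant on gauge orbits, which is where Lemma~\ref{serve.anche.questo}(1) enters: computing
\[
\phi_A(e^a e^\xi)=(e^a e^\xi)^{-1}(F\otimes A)=e^{-\xi}e^{-a}(F\otimes A)=e^{-\xi}(F\otimes A)=\phi_A(e^\xi),
\]
since $e^{-a}(F\otimes A)=F\otimes A$ exactly because $a\in\End^0(V;F)\otimes\mathfrak{m}_A$. Thus $\phi_A$ descends to the quotient and it remains to prove that the induced map is bijective.

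For injectivity I would argue as follows. Suppose $\phi_A(e^\xi)=\phi_A(e^\eta)$, that is $e^{-\xi}(F\otimes A)=e^{-\eta}(F\otimes A)$. Setting $e^a:=e^\eta e^{-\xi}\in\exp(\End^0(V)\otimes\mathfrak{m}_A)$, which is well defined by nilpotence of $\mathfrak{m}_A$, this equality reads $e^a(F\otimes A)=F\otimes A$, so Lemma~\ref{serve.anche.questo}(1) forces $a\in\End^0(V;F)\otimes\mathfrak{m}_A$. Since $e^\eta=e^a e^\xi$, the two Maurer-Cartan elements lie in the same $\exp(\End^0(V;F)\otimes\mathfrak{m}_A)$-orbit, and injectivity on the quotient follows.

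The main work is surjectivity, and this is the step I expect to be the real obstacle. Given $F_A\in S_{V,F}(A)$, the goal is to produce $\xi\in\End^0(V)\otimes\mathfrak{m}_A$ with $e^{-\xi}(F\otimes A)=F_A$. The plan is to exhibit a graded $A$-linear automorphism $g\in\Aut_V(A)$ carrying $F\otimes A$ onto $F_A$ and then set $e^{-\xi}=g$. To build $g$ degree by degree, I would choose a basis of $F^i$ extending to a basis of $V^i$; since $F_A^i$ is a free $A$-module with $F_A^i\otimes_A\K=F^i$, lifts of the chosen basis of $F^i$ form, by Nakayama's lemma, a basis of $F_A^i$, and declaring $g^i$ to send each basis vector of $F^i\otimes A$ to its lift and to fix the complementary basis vectors produces a degree-$0$ endomorphism reducing to the identity modulo $\mathfrak{m}_A$, hence an element of $\Aut_{V}(A)$ in that degree with $g^i(F^i\otimes A)=F_A^i$. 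Writing $g=e^{-\xi}$ gives $\phi_A(e^\xi)=F_A$, and finally $e^\xi$ lies in $\MC_{\End^*(V;F)\hookrightarrow\End^*(V)}(A)$ precisely because $F_A$ is a subcomplex, which is the defining equivalence established in the proof of Lemma~\ref{first.phi}. The care needed here is purely module-theoretic, namely the freeness of $F_A^i$ together with the Nakayama lift of bases, while everything else reduces to Lemma~\ref{serve.anche.questo}(1).
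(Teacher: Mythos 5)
Your proposal is correct and follows essentially the same route as the paper: surjectivity via lifting a basis of $F$ to a basis of the free $A$-modules $F_A^i$ (your Nakayama step just makes the paper's one-line lift explicit), membership in the Maurer--Cartan set via the subcomplex equivalence from Lemma~\ref{first.phi}, and gauge-equivalence handled by the characterization $e^a(F\otimes A)=F\otimes A\iff a\in\End^0(V;F)\otimes\mathfrak{m}_A$ of Lemma~\ref{serve.anche.questo}(1), which is exactly the content of the paper's ``if and only if'' step. The only cosmetic difference is that you split well-definedness and injectivity into two steps where the paper packages them into a single biconditional.
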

\begin{proof} Given $F_A\in S_{V,F}(A)$, every basis of $F$ lifts to a basis of $F_A$ and then there exists 
an element $\xi\in \End^{0}(V)\otimes \mathfrak{m}_A$ such that $e^{-\xi}(F\otimes A)=F_A$. The argument in the proof of Lemma \ref{first.phi} shows that $e^\xi\in \MC_{\End^*(V;F)\hookrightarrow\End^*(V)}(A)$. Therefore $\phi_A$ is surjective and to conclude we need to show that $\phi_A(e^{\xi_1})=\phi_A(e^{\xi_2})$ if and only if $e^{\xi_2}=e^\eta e^{\xi_1}$ for some $\eta\in \End^0(V;F)\otimes \mathfrak{m}_A$.
This is immediate, since  we have 
$e^{-\xi_1}(F\otimes A)=e^{-\xi_2}(F\otimes A)$ if and only if $e^{\xi_2}e^{-\xi_1}(F\otimes A)\subseteq F\otimes A$, i.e., if and only if $e^{\xi_2}e^{-\xi_1}=e^\eta$ with $\eta\in \End^0(V;F)\otimes \mathfrak{m}_A$.
\end{proof}

\begin{proposition}\label{prop.grassmmannfunctor}
Let $(V,F)$ be a DG-pair. The natural transformation $\phi$ of Lemma \ref{first.phi}  induces a natural isomorphism of set-valued functors of local Artin algebras $\phi\colon \Def_{\mathfrak{Grass}(V;F)}\xrightarrow{\sim} Grass_{V,F}$. In particular, $Grass_{V,F}$ is an infinitesimal deformation functor.
\end{proposition}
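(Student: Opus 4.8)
The plan is to assemble the isomorphism $\phi\colon \Def_{\mathfrak{Grass}(V;F)}\xrightarrow{\sim} Grass_{V,F}$ from the pieces already established, combining the identification of $\Def_{\mathfrak{Grass}(V;F)}$ with $\Def_{\End^*(V;F)\hookrightarrow\End^*(V)}$ with the explicit computations of Lemmas~\ref{first.phi} and~\ref{second.phi}. First I would recall that, by definition, $\mathfrak{Grass}(V;F)=TW(\End^*(V;F)\hookrightarrow\End^*(V))$, so that by Lemma~\ref{lem.modellopiccolofibraomotopica} there is a canonical isomorphism of deformation functors $\Def_{\mathfrak{Grass}(V;F)}\cong\Def_{\End^*(V;F)\hookrightarrow\End^*(V)}$. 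Moreover, since the morphism $\End^*(V;F)\hookrightarrow\End^*(V)$ is injective, Remark~\ref{rem.injective} gives the simplified description of the Maurer--Cartan functor $\MC_{\End^*(V;F)\hookrightarrow\End^*(V)}$ on which $\phi$ is defined.

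Next I would record what the gauge action is in this setting. By the formula for the action of $\exp_{\mathfrak{g}_0^0}\times\exp_{d\mathfrak{g}_1^{-1}}$ on $\MC_{\mathfrak{g}_\bullet}$ given just before Lemma~\ref{lem.modellopiccolofibraomotopica}, the deformation functor $\Def_{\End^*(V;F)\hookrightarrow\End^*(V)}$ is the quotient of $\MC_{\End^*(V;F)\hookrightarrow\End^*(V)}$ by the action of $\exp(\End^0(V;F)\otimes\mathfrak{m}_A)\times\exp(d\End^{-1}(V)\otimes\mathfrak{m}_A)$. The key observation is that in the injective case the $\exp(d\End^{-1}(V)\otimes\mathfrak{m}_A)$-factor acts trivially on the relevant quotient describing subcomplexes: by Lemma~\ref{serve.anche.questo}(3), the elements $e^{d\zeta}$ are precisely those lying in $\Aut^0_V(A)$, and since $\phi_A(e^\xi)=e^{-\xi}(F\otimes A)$ depends only on the $\Aut^0_V$-orbit data, pre- or post-composing $e^\xi$ by such an element leaves $\phi_A(e^\xi)$ in the same class after passing to $Grass_{V,F}=S_{V,F}/\Aut^0_V$. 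I would therefore check that $\phi$ descends to a well-defined natural transformation $\Def_{\End^*(V;F)\hookrightarrow\End^*(V)}\to Grass_{V,F}$, exploiting Lemma~\ref{second.phi} to handle the $\exp(\End^0(V;F)\otimes\mathfrak{m}_A)$-factor and Lemma~\ref{serve.anche.questo} to handle the $\exp(d\End^{-1}(V)\otimes\mathfrak{m}_A)$-factor.

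Concretely, Lemma~\ref{second.phi} already provides a bijection between $\MC_{\End^*(V;F)\hookrightarrow\End^*(V)}(A)$ modulo the $\exp(\End^0(V;F)\otimes\mathfrak{m}_A)$-action and $S_{V,F}(A)$. To obtain $Grass_{V,F}(A)=S_{V,F}(A)/\Aut^0_V(A)$, I would quotient both sides further: on the domain side by the residual $\exp(d\End^{-1}(V)\otimes\mathfrak{m}_A)$-action, and on the codomain side by the $\Aut^0_V(A)$-action. Using the explicit form of the $(e^{x_0^0},e^{dx_1^{-1}})$-action together with Lemma~\ref{serve.anche.questo}(3), which identifies $\Aut^0_V(A)$ with $\exp(d\End^{-1}(V)\otimes\mathfrak{m}_A)$, I would verify that $\phi_A$ intertwines these two residual actions, so that Lemma~\ref{second.phi} descends to the desired isomorphism $\Def_{\mathfrak{Grass}(V;F)}(A)\xrightarrow{\sim}Grass_{V,F}(A)$, natural in $A$.

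I expect the main obstacle to be the careful bookkeeping of the two commuting group actions and verifying their precise compatibility under $\phi$. In particular, one must check that the $e^{dx_1^{-1}}$-part of the gauge action on $\MC_{\End^*(V;F)\hookrightarrow\End^*(V)}$ corresponds exactly, under $\phi_A$, to the $\Aut^0_V$-action by $e^{d\zeta}$ on subcomplexes $F_A\in S_{V,F}(A)$, i.e.\ that $\phi_A((e^{x_0^0},e^{dx_1^{-1}})\ast e^\xi)=e^{d\zeta}\cdot\phi_A(e^\xi)$ for a suitable $\zeta$, and that $\Aut^0_V(A)$ is exhausted by such elements. This is essentially a matching-up computation, relying on the formula $e^\xi de^{-\xi}-d=e^\xi\ast 0$ from the proof of Lemma~\ref{first.phi}, but it requires attention to the distinction between the left multiplication $e^\xi\mapsto e^\eta e^\xi$ controlling membership in a fixed subcomplex class and the right-hand factor $e^{-dx_1^{-1}}$ coming from the homotopy fiber description. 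Once this compatibility is established, the final assertion that $Grass_{V,F}$ is an infinitesimal deformation functor follows immediately, since it is isomorphic to $\Def_{\mathfrak{Grass}(V;F)}$, which is a deformation functor by construction.
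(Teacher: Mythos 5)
Your proposal is correct and follows essentially the same route as the paper's proof: both combine Lemma~\ref{lem.modellopiccolofibraomotopica} (identifying $\Def_{\mathfrak{Grass}(V;F)}$ with the quotient of $\MC_{\End^*(V;F)\hookrightarrow\End^*(V)}$ by the gauge group $\exp(\End^0(V;F)\otimes\mathfrak{m}_A)\times\exp(d\End^{-1}(V)\otimes\mathfrak{m}_A)$), Lemma~\ref{second.phi} (disposing of the first factor and landing in $S_{V,F}$), and Lemma~\ref{serve.anche.questo}(3) (identifying the second factor with $\Aut^0_V(A)$, whose right-multiplication action on Maurer--Cartan elements matches, via $\phi_A(e^\xi e^{-d\zeta})=e^{d\zeta}\cdot\phi_A(e^\xi)$, the $\Aut^0_V$-action defining $Grass_{V,F}=S_{V,F}/\Aut^0_V$). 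One small slip: your phrase ``pre- or post-composing'' overclaims, since left multiplication $e^\xi\mapsto e^{d\zeta}e^\xi$ is not part of the gauge action and need not even preserve $\MC$ (as $e^{d\zeta}$ does not preserve $F\otimes A$), but this is harmless because your actual verification in the last paragraph correctly uses only the right-hand factor $e^{-dx_1^{-1}}$.
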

\begin{proof} From Lemma \ref{second.phi} it follows that $\phi$ induces a natural isomorphism
\[
\phi_A\colon \frac{\MC_{\End^*(V;F)\hookrightarrow\End^*(V)}(A)}{\;\exp(\End^0(V;F)\otimes \mathfrak{m}_A)\times \Aut^0_V(A)\;}\xrightarrow{\;\sim\;}  
Grass_{V,F}(A),
\]
for any local Artin algebra $A$. Now recall from Lemma \ref{serve.anche.questo} that $\Aut^0_V(A)\cong \exp(d\End^{-1}(V)\otimes \mathfrak{m}_A)$, and use Lemma \ref{lem.modellopiccolofibraomotopica} to conclude.
\end{proof}

\begin{remark}
 Unless the inclusion $F\hookrightarrow V$ is injective in cohomology, the deformation functor $Grass_{V,F}$ is generally obstructed. Consider for instance the case where $V=V^0\oplus V^1$, $\dim V^0=\dim V^1=2$, the differential $d\colon V^0\to V^1$ has rank $1$ and $F=\ker d\oplus d(V^0)$. 
Then the functor $Grass_{V,F}$ is pro-represented by two lines meeting transversally at a point: the first line parametrizes deformations of $F$ leaving $F^0=\ker d$ fixed and the second line parametrizes deformations of $F$ leaving $F^1=d(V^0)$ fixed. Alternatively, one can  easily prove that the primary obstruction of the functor $\Def_{\mathfrak{Grass}(V;F)}$ is not trivial.
\end{remark}

\bigskip
\section{The tautological bundle of the (infinitesimal) homotopy Grassmannian}

We now turn our attention to the deformation functor $Q_{V,F}$ associated with the DG-Lie algebra $\mathfrak{Q}(V;F)$. We will show that this deformation functor can be naturally interpreted as the tautological bundle over the infinitesimal homotopy Grassmannian $Grass_{V,F}$ and that the two morphism of DG-Lie algebras $\mathfrak{Grass}(V,F)\hookrightarrow \mathfrak{Q}(V;F)\twoheadrightarrow \mathfrak{Grass}(V,F)$ considered in Section \ref{section.end.aff.etc} induce the zero section and the projection for the tautological bundle, respectively.

\begin{definition}
For any DG-pair $(F,V)$ and for any local Artin algebra $A$, we denote by $Z_{V,F}$ and $E_{V,F}(A)$ the sets
\[ Z_{V,F}(A)=\left\{(F_A,v)\;\middle|\; F_A\in S_{V,F}(A),\quad v\in Z^0\left(\frac{V\otimes\mathfrak{m}_A}{F_A\otimes_A\mathfrak{m}_A}\right)\right\}\]
and
\[ E_{V,F}(A)=\left\{(F_A,[v])\;\middle|\; F_A\in S_{V,F}(A),\quad [v]\in H^0\left(\frac{V\otimes\mathfrak{m}_A}{F_A\otimes_A\mathfrak{m}_A}\right)\right\}.\]
\end{definition}
\begin{lemma}Every morphism $A\to B$ in $\mathbf{Art}$ induces a morphism \[\frac{V\otimes\mathfrak{m}_A}{F_A\otimes_A\mathfrak{m}_A}\to 
\frac{V\otimes\mathfrak{m}_B}{(F_A\otimes_A B)\otimes_B\mathfrak{m}_B}\;.\] making $Z_{V,F}, E_{V,F}\colon \Art\to \Set$ be functors. The obvious projection $(F_A,v)\mapsto (F_A,[v])$ is a natural transformation $Z_{V,F}\to E_{V,F}$.
\end{lemma}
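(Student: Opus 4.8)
The plan is to produce, for every arrow $\varphi\colon A\to B$ in $\Art$, a morphism of complexes between the two quotient complexes, and then to read off the functoriality of $Z_{V,F}$ and $E_{V,F}$ together with the naturality of the projection. The preceding lemma already supplies the functoriality of $S_{V,F}$, so the first coordinate $F_A\mapsto F_A\otimes_A B$ is under control and the real content lies in the behaviour of the quotient complexes.

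First I would make the subobjects explicit. Since over a local Artin ring flat is the same as free, $F_A$ is free, and tensoring the inclusion $\mathfrak{m}_A\hookrightarrow A$ with $F_A$ realises $F_A\otimes_A\mathfrak{m}_A$ as the submodule $\mathfrak{m}_A F_A\subseteq F_A$. As $F_A\subseteq V\otimes A$ one has $\mathfrak{m}_A F_A\subseteq \mathfrak{m}_A(V\otimes A)=V\otimes\mathfrak{m}_A$, so $F_A\otimes_A\mathfrak{m}_A$ is genuinely a subcomplex of $V\otimes\mathfrak{m}_A$ (it is $d$-stable because the differential $d\otimes\Id$ is $A$-linear and preserves $F_A$), and the quotient $\frac{V\otimes\mathfrak{m}_A}{F_A\otimes_A\mathfrak{m}_A}$ is a complex. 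The same remarks identify the target subcomplex with $\mathfrak{m}_B(F_A\otimes_A B)$.

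Next I would construct the induced map from the base-change map $\Id_V\otimes\varphi\colon V\otimes A\to V\otimes B$, which is $A$-linear (viewing $B$ as an $A$-module via $\varphi$) and commutes with the differential. Since $\varphi(\mathfrak{m}_A)\subseteq\mathfrak{m}_B$ it restricts to $V\otimes\mathfrak{m}_A\to V\otimes\mathfrak{m}_B$, and it sends $F_A$ into the submodule generating $F_A\otimes_A B$; here the freeness of $F_A$ and of $(V\otimes A)/F_A$ guarantees, exactly as in the functoriality proof for $S_{V,F}$, that $F_A\otimes_A B$ injects into $V\otimes B$. Hence a typical generator $a f$ with $a\in\mathfrak{m}_A$, $f\in F_A$ is carried to $\varphi(a)\,(\Id_V\otimes\varphi)(f)\in\mathfrak{m}_B(F_A\otimes_A B)$, so $\Id_V\otimes\varphi$ maps $F_A\otimes_A\mathfrak{m}_A$ into $(F_A\otimes_A B)\otimes_B\mathfrak{m}_B$ and therefore descends to the desired morphism of complexes
\[\bar\varphi\colon \frac{V\otimes\mathfrak{m}_A}{F_A\otimes_A\mathfrak{m}_A}\to\frac{V\otimes\mathfrak{m}_B}{(F_A\otimes_A B)\otimes_B\mathfrak{m}_B}.\]
I expect this verification that $\Id_V\otimes\varphi$ respects the subcomplexes to be the only nonformal point, and it is precisely the freeness input recalled above.

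Finally I would conclude. Being a morphism of complexes, $\bar\varphi$ takes $Z^0$ to $Z^0$ and induces a map on $H^0$; pairing these with $F_A\mapsto F_A\otimes_A B$ defines $Z_{V,F}(A)\to Z_{V,F}(B)$ and $E_{V,F}(A)\to E_{V,F}(B)$. Functoriality—compatibility with composition and with identities—follows from that of the base-change map $\Id_V\otimes(-)$ and of $-\otimes_A B$, both standard. The projection $(F_A,v)\mapsto(F_A,[v])$ commutes with the two induced maps because passing from a cocycle to its cohomology class is natural with respect to $\bar\varphi$, and hence it is a natural transformation $Z_{V,F}\to E_{V,F}$.
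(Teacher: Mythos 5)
Your proof is correct and follows the same route the paper takes, which is simply the observation that $A\to B$ induces $F_A\to F_A\otimes_A B$ (together with the flatness facts already established in the functoriality proof for $S_{V,F}$); the paper compresses all of this into the single sentence ``This is immediate, since a morphism $A\to B$ induces a morphism $F_A\to F_A\otimes_A B$.'' Your write-up merely makes explicit the identifications $F_A\otimes_A\mathfrak{m}_A\cong\mathfrak{m}_A F_A$ and the descent of $\Id_V\otimes\varphi$ to the quotient complexes, which the paper leaves to the reader.
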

\begin{proof}
This is immediate, since a morphism $A\to B$ induces a morphisms $F_A\to F_A\otimes_A B$. 
\end{proof}

The action of $\Aut^0_V$ on $S_{V,F}$ lifts naturally to an action on $E_{V,F}$, since every 
$e^\xi\in \Aut_V^0(A)$ gives a morphism of cochain complexes
\[ e^\xi\colon \frac{V\otimes\mathfrak{m}_A}{F_A\otimes_A\mathfrak{m}_A}\to
\frac{V\otimes\mathfrak{m}_A}{e^\xi(F_A)\otimes_A\mathfrak{m}_A}\;.\]
\begin{definition}
For any DG-pair $(F,V)$ we denote by $Q_{V,F}\colon \Art\to \Set$ the quotient functor
\[  Q_{V,F}=E_{V,F}/\Aut^0_V,
\]
where the $\Aut^0_V$-action on $E_{V,F}$ is the
natural left action 
\[ \Aut^0_V\times E_{V,F}\to E_{V,F},\qquad (e^\xi,(F_A,[v]))\mapsto (e^\xi(F_A),e^\xi[v]).\]
\end{definition}

\begin{lemma}\label{first.psi}
Let $(V,F)$ be a DG-pair. Then we have a natural transformation 
\[ \psi\colon \MC_{\Aff(V;F)\hookrightarrow\Aff(V)}\to Z_{V,F},\]
given by
\[ \psi_A\colon (\xi,w)\to\left(e^{-\xi}(F\otimes A),\frac{e^{-\xi}-1}{\xi}(w)\mod e^{-\xi}(F\otimes A)\otimes_A \mathfrak{m}_A\right)\]
for any local Artin algebra $A$, where we have used the DG-vector space isomorphism $\Aff^*(V)=\End^*(V)\oplus V$ to write an element in $\Aff(V)^0\otimes\mathfrak{m}_A$ as a pair $(\xi,v)$, where $\xi\in\End^0(V)\otimes\mathfrak{m}_A$ and $v\in V^0\otimes\mathfrak{m}_A$.
\end{lemma}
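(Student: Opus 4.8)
The plan is to use the simplified description of the Maurer--Cartan functor of an injective morphism provided by Remark~\ref{rem.injective}. Since $\Aff(V;F)\hookrightarrow\Aff(V)$ is injective, an element of $\MC_{\Aff(V;F)\hookrightarrow\Aff(V)}(A)$ is an exponential $e^{(\xi,w)}$, with $(\xi,w)\in \Aff(V)^0\otimes\mathfrak{m}_A$, subject to the single condition $e^{(\xi,w)}\ast 0\in \Aff(V;F)^1\otimes\mathfrak{m}_A$, where $\xi\in\End^0(V)\otimes\mathfrak{m}_A$ and $w\in V^0\otimes\mathfrak{m}_A$ under the isomorphism $\Aff(V)\cong\End^*(V)\oplus V$ of Remark~\ref{rem.aff}. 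The first step is therefore to compute the gauge term $e^{(\xi,w)}\ast 0$ explicitly and to decompose the Maurer--Cartan condition into its $\End^*(V)$-component and its $V$-component.

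To carry this out I will work inside the ambient associative DG-algebra $\End^*(V\oplus\K)$, in which $\Aff(V)$ sits as the space of matrices $\left(\begin{matrix}\xi & w\\ 0 & 0\end{matrix}\right)$ and the differential of $V\oplus\K$ is $D=\left(\begin{matrix}d&0\\0&0\end{matrix}\right)$. As in the proof of Lemma~\ref{first.phi}, the gauge action on the zero Maurer--Cartan element is $e^{(\xi,w)}\ast 0=e^{(\xi,w)}D\, e^{-(\xi,w)}-D$. A direct exponentiation of the nilpotent matrix gives
\[
e^{(\xi,w)}=\left(\begin{matrix} e^{\xi} & \frac{e^{\xi}-1}{\xi}w\\ 0 & 1\end{matrix}\right),
\]
and multiplying out $e^{(\xi,w)}D\, e^{-(\xi,w)}-D$ yields the two components
\[
e^{(\xi,w)}\ast 0=\left(\, e^{\xi}d\, e^{-\xi}-d\,,\; e^{\xi}d\!\left(\tfrac{e^{-\xi}-1}{\xi}w\right)\right)\in \left(\End^1(V)\oplus V^1\right)\otimes\mathfrak{m}_A.
\]
Since $\Aff(V;F)^1\cong \End^1(V;F)\oplus F^1$, the Maurer--Cartan condition is equivalent to the conjunction of $e^{\xi}d\, e^{-\xi}-d\in\End^1(V;F)\otimes\mathfrak{m}_A$ and $e^{\xi}d(\tfrac{e^{-\xi}-1}{\xi}w)\in F^1\otimes\mathfrak{m}_A$.

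Next I will read off the two factors of $\psi_A$ from these two conditions. The first condition says exactly that $e^{\xi}d\, e^{-\xi}-d=e^{\xi}\ast 0$ lies in $\End^1(V;F)\otimes\mathfrak{m}_A$, i.e.\ that $e^{\xi}\in\MC_{\End^*(V;F)\hookrightarrow\End^*(V)}(A)$ (conceptually, it is the image of $e^{(\xi,w)}$ under the map induced by the DG-Lie projection $\Aff(V)\twoheadrightarrow\End^*(V)$, which carries $\Aff(V;F)$ into $\End^*(V;F)$); hence Lemma~\ref{first.phi} applies and gives $F_A:=e^{-\xi}(F\otimes A)\in S_{V,F}(A)$. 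For the second factor, set $u:=\tfrac{e^{-\xi}-1}{\xi}w\in V^0\otimes\mathfrak{m}_A$; applying the $A$-linear automorphism $e^{-\xi}$ to the second condition turns $e^{\xi}du\in F^1\otimes\mathfrak{m}_A$ into $du\in e^{-\xi}(F^1\otimes\mathfrak{m}_A)=(F_A\otimes_A\mathfrak{m}_A)^1$. This is precisely the statement that $u \mod F_A\otimes_A\mathfrak{m}_A$ is a degree-$0$ cocycle of the quotient complex, so that $\psi_A(\xi,w)=(F_A,[u])$ is a well-defined element of $Z_{V,F}(A)$. Naturality in $A$ is then a routine verification, since all operations involved ($e^{\pm\xi}$, the power series $\tfrac{e^{-\xi}-1}{\xi}$, and the formation of $F_A$ and of the quotient complex) commute with base change $-\otimes_A B$.

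The only genuinely delicate point is the gauge computation in the affine setting: one must correctly identify the differential of $\Aff(V)\cong\End^*(V)\oplus V$, namely $(\xi,w)\mapsto(d\xi,dw)$ with $d\xi$ the commutator differential on endomorphisms, and keep track of the non-commutativity of $d$ and $\xi$ when expanding $e^{\xi}d\, e^{-\xi}$ and $e^{\xi}d(\tfrac{e^{-\xi}-1}{\xi}w)$. Once the two components are in hand, the identification of the first with the hypothesis of Lemma~\ref{first.phi}, and of the second with the cocycle condition in the quotient complex, is immediate.
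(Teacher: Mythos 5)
Your proposal is correct and follows essentially the same route as the paper's proof: both compute the gauge action $e^{(\xi,w)}\ast 0$ by exponentiating the nilpotent matrix inside the associative algebra $\End^*(V\oplus\K)$, obtaining the two components $\bigl(e^{\xi}\ast 0,\, e^{\xi}d(\tfrac{e^{-\xi}-1}{\xi}w)\bigr)$, then split the Maurer--Cartan condition via $\Aff(V;F)\cong\End^*(V;F)\oplus F$, invoke Lemma~\ref{first.phi} for the first component, and read off the cocycle condition in $V\otimes\mathfrak{m}_A/F_A\otimes_A\mathfrak{m}_A$ from the second. The only cosmetic difference is that you state the second condition as $e^{\xi}d(\tfrac{e^{-\xi}-1}{\xi}w)\in F^1\otimes\mathfrak{m}_A$ and then conjugate by $e^{-\xi}$, whereas the paper writes it directly as $d(\tfrac{e^{-\xi}-1}{\xi}w)\in e^{-\xi}(F\otimes A)\otimes_A\mathfrak{m}_A$; these are identical.
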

\begin{proof}
As a DG-Lie algebra, $\Aff(V)$ is a sub-DG-Lie algebra of $\End^*(V\oplus \K)$ via the embedding 
\[
(\xi,w)\mapsto \left(\begin{matrix}
\xi&w\\0&0
\end{matrix}\right)
\]
so we can compute the gauge action of 
$\exp(\Aff^0(V)\otimes\mathfrak{m}_A)=\exp(\End^0(V)\otimes\mathfrak{m}_A)\ltimes V^0\otimes\mathfrak{m}_A$ by working in $\End^*(V\oplus \K)$, where this gauge action takes a simple form, expressed in terms of the differential graded associative algebra structure on $\End^*(V\oplus \K)$. Namely, in $\End^*(V\oplus \K)\otimes\mathfrak{m}_A$ we have
\begin{align*}
\exp\left(\begin{matrix}\xi & w\\0&0\end{matrix}\right)d \exp\left(\begin{matrix}-\xi & -w\\0&0\end{matrix}\right)-d&=
\left(
\begin{matrix}e^{\xi}&\frac{e^{\xi}-1}{\xi}(w)\\
0&1\end{matrix}\right)d\left(\begin{matrix}e^{-\xi}& \frac{e^{-\xi}-1}{-\xi}(-w)\\ 0 &1\end{matrix}\right)-d\\
&=\left(\begin{matrix}e^\xi d e^{-\xi}-d & e^\xi d\left(\frac{e^{-\xi}-1}{\xi}(w)\right)\\ 0 & 0\end{matrix}\right)\\
\end{align*}
so that in $\Aff(V)\otimes\mathfrak{m}_A$ we find
\[
e^{(\xi,w)}\ast (0,0)=\left(e^\xi *0, e^\xi d\left(\frac{e^{-\xi}-1}{\xi}(w)\right)\right).
\]
By this and by Remark \ref{rem.injective} we find
\begin{align*}
\MC_{\Aff(V;F)\hookrightarrow\Aff(V)}(A)&=\{(\xi,w)\in \Aff^0(V)\otimes \mathfrak{m}_A\mid 
e^{(\xi,w)}\ast 0\in \Aff^1(V;F)\otimes\mathfrak{m}_A\}\\
&=\left\{(\xi,w)\in \Aff^0(V)\otimes \mathfrak{m}_A\;\middle|\; \begin{cases}
e^{\xi}\ast 0\in \End^1(V;F)\otimes\mathfrak{m}_A\\
d\left(\frac{e^{-\xi}-1}{\xi}(w)\right)\in e^{-\xi}(F\otimes A)\otimes_A\mathfrak{m}_A \end{cases} 
\!\!\!\right\}.
\end{align*}
The first condition means that $\xi\in \MC_{\End^*(V;F)\hookrightarrow\End^*(V)}(A)$, so by Lemma \ref{first.phi} the DG-vector space $F_A=e^{-\xi}(F\otimes A)$ is an element in $S_{V,F}(A)$ and the second condition then implies that
\[ \frac{e^{-\xi}-1}{\xi}(w)\in 
Z^0\left(\frac{V\otimes\mathfrak{m}_A}{F_A\otimes_A \mathfrak{m}_A}\right).
\]
\end{proof}

\begin{lemma}\label{mc-to-Z}
Let $(V,F)$ be a DG-pair. The natural transformation $\psi$ of Lemma \ref{first.psi} induces a natural
isomorphism of functors of Artin rings
\[ \psi \colon \frac{\MC_{\Aff(V;F)\hookrightarrow\Aff(V)}}{\;\exp_{\Aff^0(V;F)}\;}\xrightarrow{\;\sim\;} Z_{V,F}\;.\]
\end{lemma}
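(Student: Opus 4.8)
The plan is to mimic the structure of the analogous Grassmannian statement in Lemma~\ref{second.phi}, which established the corresponding isomorphism between the quotient Maurer-Cartan functor and $S_{V,F}$. The map $\psi_A$ descends to a well-defined map on the quotient because the gauge group $\exp_{\Aff^0(V;F)}$ preserves the fibers of $\psi_A$; I would verify this compatibility first, noting that $\Aff^0(V;F)\cong \End^0(V;F)\oplus F^0$ by Remark~\ref{rem.aff}, so a gauge transformation by $(\eta,u)$ with $\eta\in\End^0(V;F)\otimes\mathfrak{m}_A$ and $u\in F^0\otimes\mathfrak{m}_A$ changes the pair $(\xi,w)$ in a way that fixes both $F_A=e^{-\xi}(F\otimes A)$ (by Lemma~\ref{second.phi}, since the $\End$-component gauge action preserves $F_A$) and the class of the displacement vector modulo $F_A\otimes_A\mathfrak{m}_A$.

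Next I would establish \textbf{surjectivity}. Given $(F_A,v)\in Z_{V,F}(A)$, first use the surjectivity part of Lemma~\ref{second.phi} to produce $\xi\in\End^0(V)\otimes\mathfrak{m}_A$ with $e^{-\xi}(F\otimes A)=F_A$ and $e^\xi\in\MC_{\End^*(V;F)\hookrightarrow\End^*(V)}(A)$. It then remains to find $w\in V^0\otimes\mathfrak{m}_A$ so that $\frac{e^{-\xi}-1}{\xi}(w)$ represents $v$ modulo $F_A\otimes_A\mathfrak{m}_A$. Since $\frac{e^{-\xi}-1}{\xi}=-\frac{1-e^{-\xi}}{\xi}$ is an invertible operator on $V\otimes A$ (its leading term is $-\Id$), I can simply set $w=\left(\frac{e^{-\xi}-1}{\xi}\right)^{-1}(\tilde{v})$ for any lift $\tilde{v}\in V^0\otimes\mathfrak{m}_A$ of $v$, and then check via the computation in Lemma~\ref{first.psi} that the closedness of $v$ in the quotient complex forces $(\xi,w)\in\MC_{\Aff(V;F)\hookrightarrow\Aff(V)}(A)$.

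For \textbf{injectivity} I would show $\psi_A(\xi_1,w_1)=\psi_A(\xi_2,w_2)$ implies the two lie in the same gauge orbit. Equality of the first components gives $e^{-\xi_1}(F\otimes A)=e^{-\xi_2}(F\otimes A)$, whence by Lemma~\ref{second.phi} there is $\eta\in\End^0(V;F)\otimes\mathfrak{m}_A$ with $e^{\xi_2}e^{-\xi_1}=e^\eta$; after gauging I may assume $\xi_1=\xi_2=:\xi$. Equality of the second components then says $\frac{e^{-\xi}-1}{\xi}(w_1-w_2)\in F_A\otimes_A\mathfrak{m}_A$, i.e.\ $w_1-w_2\in\left(\frac{e^{-\xi}-1}{\xi}\right)^{-1}(F_A\otimes_A\mathfrak{m}_A)$, and I would identify this space with exactly the set of affine-translation gauge parameters in $F^0\otimes\mathfrak{m}_A\subseteq\Aff^0(V;F)\otimes\mathfrak{m}_A$ that connect $(\xi,w_1)$ to $(\xi,w_2)$.

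The main obstacle I anticipate is the bookkeeping in the last step: translating the condition $w_1-w_2\in\left(\frac{e^{-\xi}-1}{\xi}\right)^{-1}(F_A\otimes_A\mathfrak{m}_A)$ into an honest gauge element of $\Aff^0(V;F)$ and verifying that the nonlinear gauge action $e^{(0,u)}\ast(\xi,w)$ realizes precisely the affine shift predicted by the operator $\frac{e^{-\xi}-1}{\xi}$. This requires carefully unwinding the semidirect-product structure $\exp(\End^0(V)\otimes\mathfrak{m}_A)\ltimes V^0\otimes\mathfrak{m}_A$ and reusing the matrix computation from the proof of Lemma~\ref{first.psi}; once that identification is transparent, the remaining verifications are routine, and I would cite Lemma~\ref{first.psi} and Lemma~\ref{second.phi} freely rather than recomputing the gauge action from scratch.
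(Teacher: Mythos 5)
Your proposal is correct and is essentially the paper's own proof: surjectivity via Lemma~\ref{second.phi} plus invertibility of $\frac{e^{-\xi}-1}{\xi}$, gauge invariance via the matrix computation from Lemma~\ref{first.psi}, and injectivity by splitting the gauge group according to $\Aff^0(V;F)\cong\End^0(V;F)\oplus F^0$, reducing the $\End$-component to Lemma~\ref{second.phi} and showing the residual translation parameter lies in $F^0\otimes\mathfrak{m}_A$. The only cosmetic difference is that you gauge-fix $\xi_1=\xi_2$ before treating the translation part, whereas the paper writes the connecting element $e^{(\xi_2,w_2)}=e^{(\eta,u)}e^{(\xi_1,w_1)}$ and checks $(\eta,u)\in\Aff^0(V;F)\otimes\mathfrak{m}_A$ in one pass; your anticipated bookkeeping step does close up, via the identity $\frac{e^{-\xi}-1}{\xi}=-e^{-\xi}\circ\frac{e^{\xi}-1}{\xi}$, which shows that $\frac{e^{-\xi}-1}{\xi}(w_1-w_2)\in e^{-\xi}(F\otimes\mathfrak{m}_A)$ is equivalent to $u:=\frac{e^{\xi}-1}{\xi}(w_1-w_2)\in F^0\otimes\mathfrak{m}_A$.
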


\begin{proof}
Let $A$ be a local Artin algebra, and let $(F_A,v)\in Z_{V,F}(A)$. By Lemma \ref{second.phi}, there exists an element $\xi\in \End^{0}(V)\otimes \mathfrak{m}_A$ such that $e^{-\xi}(F\otimes A)=F_A$. Since the morphism of graded vector spaces 
\[ \frac{e^{-\xi}-1}{\xi}\colon V\otimes\mathfrak{m}_A\to V\otimes\mathfrak{m}_A\]
is invertible, this shows that the morphism $\psi_A\colon \MC_{\Aff(V;F)\hookrightarrow\Aff(V)}(A)\to Z_{V,F}(A)$ is surjective. It is also invariant by the natural left action of 
$\exp_{\Aff^0(V;F)}$ on $\MC_{\Aff(V;F)\hookrightarrow\Aff(V)}(A)$. Namely, for any $(\eta,u)$ in $\Aff^0(V;F)\otimes\mathfrak{m}_A$ and any $e^{(\xi,w)}$ in $\MC_{\Aff(V;F)\hookrightarrow\Aff(V)}(A)$, let $e^{(\beta,z)}=e^{(\eta,u)}e^{(\xi,w)}$. Then we have
\begin{align*}
\left(\begin{matrix}
e^{-\beta} & \frac{e^{-\beta}-1}{\beta}(z)\\
0&1
\end{matrix}\right)&=e^{-(\beta,z)}\\
&=e^{-(\xi,w)}e^{-(\eta,u)}\\
&=\left(\begin{matrix}
e^{-\xi} & \frac{e^{-\xi}-1}{\xi}(w)\\
0&1
\end{matrix}\right)\left(\begin{matrix}
e^{-\eta} & \frac{e^{-\eta}-1}{\eta}(u)\\
0&1
\end{matrix}\right)\\
&=\left(\begin{matrix}
e^{-\xi}e^{-\eta} & e^{-\xi}\left(\frac{e^{-\eta}-1}{\eta}(u)\right)+\frac{e^{-\xi}-1}{\xi}(w)\\
0&1
\end{matrix}\right)
\end{align*}
and so
\begin{align*}
\psi_A(&e^{(\eta,u)}e^{(\xi,w)})=\left(e^{-\beta}(F\otimes A),\frac{e^{-\beta}-1}{\beta}(z)\mod e^{-\beta}(F\otimes A)\otimes_A \mathfrak{m}_A\right)\\
&=\left(e^{-\xi}e^{-\eta} (F\otimes A),e^{-\xi}\left(\frac{e^{-\eta}-1}{\eta}(u)\right)+\frac{e^{-\xi}-1}{\xi}(w)\mod e^{-\xi}e^{-\eta} (F\otimes A)\otimes_A \mathfrak{m}_A\right)\\
&=\left(e^{-\xi} (F\otimes A),e^{-\xi}\left(\frac{e^{-\eta}-1}{\eta}(u)\right)+\frac{e^{-\xi}-1}{\xi}(w)\mod e^{-\xi} (F\otimes A)\otimes_A \mathfrak{m}_A\right)\\
&=\left(e^{-\xi} (F\otimes A),\frac{e^{-\xi}-1}{\xi}(w)\mod e^{-\xi} (F\otimes A)\otimes_A \mathfrak{m}_A\right)\\
&=\psi_A(e^{(\xi,w)}),
\end{align*}
since 
$e^{-\xi}\left(\frac{e^{-\eta}-1}{\eta}(u)\right)\in e^{-\xi}(F\otimes \mathfrak{m}_A)$.
Therefore $\psi_A$ induces a surjective morphism
\[
\psi_A \colon \frac{\MC_{\Aff(V;F)\hookrightarrow\Aff(V)}(A)}{\;\exp_{\Aff^0(V;F)}(A)\;}\to Z_{V,F}(A)\;.
\]
To show that this is also injective, let $(\xi_1,w_1)$ and $(\xi_2,w_2)$ in $\MC_{\Aff(V;F)\hookrightarrow\Aff(V)}(A)$ be such that $\psi_A(e^{(\xi_1,w_1)})=\psi_A(e^{(\xi_2,w_2)})$, and let $(\eta,u)\in \Aff^0(V)\otimes \mathfrak{m}_A$ be such that $e^{(\xi_2,w_2)}=e^{(\eta,u)} e^{(\xi_1,w_1)}$. By Lemma \ref{second.phi}, $\eta$ is an element in $\End^0(V;F)\otimes \mathfrak{m}_A$ and so
\[
\psi_A(e^{(\xi_2,w_2)})=\left(e^{-\xi_1} (F\otimes A),e^{-\xi_1}\left(\frac{e^{-\eta}-1}{\eta}(u)\right)+\frac{e^{-\xi_1}-1}{\xi_1}(w_1)\mod e^{-\xi_1} (F\otimes A)\otimes_A \mathfrak{m}_A\right)
\]
Since $\psi_A(e^{(\xi_1,w_1)})=\psi_A(e^{(\xi_2,w_2)})$ we find that
\[
\frac{e^{-\eta}-1}{\eta}(u)\in (F\otimes A)\otimes_A \mathfrak{m}_A
\]
and so that $u\in (F\otimes A)\otimes_A \mathfrak{m}_A$ since $\frac{e^{-\eta}-1}{\eta}\colon F\otimes A\to F\otimes A$
is invertible. Therefore $(\eta,u)\in \Aff^0(V;F)\otimes \mathfrak{m}_A$.
\end{proof}

\begin{proposition}\label{last.prop.of.this.section}
Let $(V,F)$ be a DG-pair. The natural transformation $\psi$ of Lemma \ref{first.psi} induces 
a natural isomorphism of set-valued functors of local Artin algebras $\phi\colon \Def_{\mathfrak{Q}(V;F)}\xrightarrow{\sim} Q_{V,F}$. In particular, $Q_{V,F}$ is an infinitesimal deformation functor.
\end{proposition}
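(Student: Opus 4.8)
The plan is to mirror the proof of the Grassmannian case, Proposition~\ref{prop.grassmmannfunctor}, now keeping track of the vector part of affine endomorphisms. By Lemma~\ref{lem.modellopiccolofibraomotopica} applied to the injective morphism $\Aff(V;F)\hookrightarrow\Aff(V)$, the functor $\Def_{\mathfrak{Q}(V;F)}$ is canonically isomorphic to the quotient of $\MC_{\Aff(V;F)\hookrightarrow\Aff(V)}$ by the full gauge group $\exp_{\Aff^0(V;F)}\times\exp_{d\Aff(V)^{-1}}$, acting by left and right multiplication respectively. Lemma~\ref{mc-to-Z} already disposes of the first factor, identifying $\psi$ with an isomorphism between the quotient of $\MC_{\Aff(V;F)\hookrightarrow\Aff(V)}$ by the left $\exp_{\Aff^0(V;F)}$-action and the functor $Z_{V,F}$. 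Since left and right multiplication commute, the right action of $\exp_{d\Aff(V)^{-1}}$ descends to $Z_{V,F}$, and it remains to prove that $Z_{V,F}/\exp_{d\Aff(V)^{-1}}$ is precisely $Q_{V,F}=E_{V,F}/\Aut^0_V$.

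First I would compute this residual action explicitly. Using the splitting $\Aff(V)^{-1}=\End^{-1}(V)\oplus V^{-1}$ of Remark~\ref{rem.aff}, I write a generator of $\exp_{d\Aff(V)^{-1}}$ as $e^{-d(\zeta,u)}=e^{-(d\zeta,du)}$ with $\zeta\in\End^{-1}(V)\otimes\mathfrak{m}_A$ and $u\in V^{-1}\otimes\mathfrak{m}_A$. Representing affine elements by matrices over $\End^*(V\oplus\K)$ as in Lemma~\ref{first.psi} and multiplying out, the product $e^{(\xi,w)}e^{-d(\zeta,u)}=e^{(\xi',w')}$ satisfies $e^{-\xi'}=e^{d\zeta}e^{-\xi}$; hence $\psi$ sends it to the pair whose subcomplex is $e^{d\zeta}(F_A)$, where $F_A=e^{-\xi}(F\otimes A)$, and whose vector part is $e^{d\zeta}\bigl(\tfrac{e^{-\xi}-1}{\xi}(w)\bigr)+\tfrac{e^{d\zeta}-1}{d\zeta}(du)$. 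By Lemma~\ref{serve.anche.questo}(3) we have $e^{d\zeta}\in\Aut^0_V(A)$, so the new subcomplex is exactly the $\Aut^0_V$-translate of $F_A$.

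The heart of the argument is the behaviour of the vector part. The key point is that $d\zeta=d_{\End}\zeta$ is a cocycle in $\End^0(V)$, hence commutes with the differential $d_V$; consequently $\tfrac{e^{d\zeta}-1}{d\zeta}$ commutes with $d_V$ and the extra summand is a coboundary,
\[
\tfrac{e^{d\zeta}-1}{d\zeta}(du)=d_V\Bigl(\tfrac{e^{d\zeta}-1}{d\zeta}(u)\Bigr).
\]
Passing to cohomology classes in $V\otimes\mathfrak{m}_A/e^{d\zeta}(F_A)\otimes_A\mathfrak{m}_A$, the residual action therefore reads $(F_A,[v])\mapsto(e^{d\zeta}(F_A),e^{d\zeta}[v])$, which is exactly the $\Aut^0_V$-action defining $Q_{V,F}$. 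Specializing to $\zeta=0$ gives $(F_A,v)\mapsto(F_A,v+d_V u)$, and as $u$ ranges over $V^{-1}\otimes\mathfrak{m}_A$ the elements $d_V u\bmod F_A$ exhaust all coboundaries of the quotient complex; thus quotienting $Z_{V,F}$ by the subgroup $\exp(dV^{-1}\otimes\mathfrak{m}_A)$ yields precisely the projection $Z_{V,F}\to E_{V,F}$. Recalling $\Aut^0_V\cong\exp(d\End^{-1}(V)\otimes\mathfrak{m}_A)$ from Lemma~\ref{serve.anche.questo}(3), the complementary factor then realizes the $\Aut^0_V$-quotient, and chaining the isomorphisms gives $\Def_{\mathfrak{Q}(V;F)}\cong Q_{V,F}$; the last assertion is immediate, since the left-hand side is by construction a deformation functor.

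The step I expect to demand the most care is the bookkeeping showing that the single group $\exp_{d\Aff(V)^{-1}}$ really splits into the two independent operations ``replace a cocycle by its cohomology class'' and ``act by $\Aut^0_V$''. Concretely, one must check that $dV^{-1}\otimes\mathfrak{m}_A$ is a Lie ideal inside $d\Aff(V)^{-1}\otimes\mathfrak{m}_A$, so that $\exp(dV^{-1}\otimes\mathfrak{m}_A)$ is normal with quotient group $\cong\Aut^0_V$, and that the $u$-action is genuinely transitive on the fibres of $Z_{V,F}\to E_{V,F}$. These are the points where one must be attentive to the signs in the bracket of Remark~\ref{rem.aff} and to the flatness identifications underlying $E_{V,F}$; everything else is the routine matrix computation already rehearsed in Lemmas~\ref{first.psi} and~\ref{mc-to-Z}.
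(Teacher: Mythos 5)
Your proposal is correct and takes essentially the same route as the paper's proof: it invokes Lemma~\ref{lem.modellopiccolofibraomotopica} and Lemma~\ref{mc-to-Z} to reduce to the residual action of $\exp_{d\Aff(V)^{-1}}$, and then performs the same matrix computation as the paper (which computes $e^{(d\zeta,dy)}e^{-(\xi,w)}$, with the same key observation that $\tfrac{e^{d\zeta}-1}{d\zeta}$ is a morphism of complexes so the extra vector term is a coboundary), concluding via Lemma~\ref{serve.anche.questo}. Your extra bookkeeping — normality of $\exp(dV^{-1}\otimes\mathfrak{m}_A)$ inside $\exp_{d\Aff(V)^{-1}}$ and the resulting two-step quotient $Z_{V,F}\to E_{V,F}\to Q_{V,F}$ — is a valid and careful spelling out of what the paper compresses into ``the conclusion then immediately follows.''
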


\begin{proof}
Let $(\xi,w)\in \Aff^0(V)\otimes\mathfrak{m}_A$  and 
$(\zeta,y)\in \Aff^{-1}(V)\otimes\mathfrak{m}_A$.
Since $d\zeta\colon V\otimes\mathfrak{m}_A\to V\otimes\mathfrak{m}_A$ is a morphism of complexes, 
$\frac{e^{d\zeta}-1}{d\zeta}\colon V\otimes\mathfrak{m}_A\to V\otimes\mathfrak{m}_A$
is an isomorphism of complexes, and so we have
\[
e^{(d\zeta,dy)}e^{-(\xi,w)}
=\left(\begin{matrix}
e^{d\zeta}e^{-\xi} & e^{d\zeta}\left(\frac{e^{-\xi}-1}{\xi}(w)\right)+d\left(\frac{e^{d\zeta}-1}{d\zeta}(y)\right)\\
0&1
\end{matrix}\right).
\]
The conclusion then immediately follows from Lemma \ref{lem.modellopiccolofibraomotopica}, Lemma~\ref{serve.anche.questo}, and Lemma~\ref{mc-to-Z}.
\end{proof}

\begin{remark}
The argument used in the proof of Proposition \ref{last.prop.of.this.section} shows in particular that we have a natural isomorphisms of functors
\[ \psi \colon \frac{\MC_{\Aff(V;F)\hookrightarrow\Aff(V)}}{\;\exp_{\Aff^0(V;F)}\times \exp_{dV^{-1}}\;}\xrightarrow{\sim} E_{V,F}\;.\] 
\end{remark}

\bigskip
\section{Formal periods and Abel-Jacobi maps}\label{sec:aj-maps}

We will now exhibit a Cartan calculus naturally associated with a DG-pair $(V,F)$ and will use it to define an abstract version of the period map of a K\"ahler manifold. Next, we will show that  the datum of a closed degree 0 element $v$ in $F$ defines an abstract version of the Abel-Jacobi map of a K\"ahler manifold.

\begin{definition} A \emph{formal period datum} is a Cartan calculus of the form
\begin{equation}\label{equ.cartancalculusbis} 
\xymatrix{\mathfrak{g}\ar@{^{ (}->}[d]\ar[r]&\End^*(V;F)\ar[d]\\
\cone(\mathrm{Id}_\mathfrak{g})\ar[r]&\End^*(V)} 
\end{equation}
where $(V,F)$ is a DG-pair.
\end{definition}

\begin{remark} Spelling out the definition, one sees that a formal period datum is the datum of a quadruple $(\mathfrak{g},V,F,\bi)$ where 
$\mathfrak{g}$ is a differential graded Lie algebra, $(V,F)$ is a DG-pair, and $\bi\colon \mathfrak{g}\to \End^*(V)$ is a Cartan homotopy whose boundary $\bl$ satisfies 
the condition $\bl_x(F)\subseteq F$ for every $x\in \mathfrak{g}$. 
\end{remark}
\begin{remark}\label{rem.periods}
By Proposition \ref{def.from.cartan}, the Cartan calculus  $\mathfrak{g}\xrightarrow{\,\bl\,}\End^*(V;F)\xrightarrow{}\End^*(V)$
 induces a distinguished morphism $
\mathfrak{g}\to \mathfrak{Grass}(V;F)$
in the homotopy category of DG-Lie algebras, whose associated morphism of deformation functors $\Def_{\mathfrak{g}}\to \Def_{\mathfrak{Grass}(V;F)}$ is induced by the morphism of Maurer-Cartan functors
\begin{align*}
\MC_{\mathfrak{g}}&\to \MC_{\mathfrak{Grass}(V;F)}\\
x&\mapsto (\bl_x,e^{-\bi_x}).
\end{align*}
\end{remark}

\begin{definition} By the term \emph{period map} of the formal period datum 
$(\mathfrak{g},V,F,\bi)$ we will mean both the distinguished morphism $\mathfrak{g}\to \mathfrak{Grass}(V;F)$ from Remark \ref{rem.periods}
and the natural transformation of functors 
\[ \mathcal{P}\colon \Def_{\mathfrak{g}}\to Grass_{V,F}\]
induced by it and by the 
isomorphism $\phi\colon\Def_{\mathfrak{Grass}(V;F)}\to Grass_{V,F}$  
of Proposition~\ref{prop.grassmmannfunctor}.
\end{definition}

\begin{remark}By the definition of the map $\phi\colon\Def_{g}\to Grass_{V,F}$ one immediately sees that
the period map $\mathcal{P}$ of the formal period datum 
$(\mathfrak{g},V,F,\bi)$ is explicitly given by
\begin{align*}
 \Def_{\mathfrak{g}}(A)&\to Grass_{V,F}(A)\\
 x&\mapsto e^{\bi_x}(F\otimes A),
\end{align*}
for any local Artin algebra $A$.
\end{remark}

The name period map is motivated by the following  example. 

\begin{example}[{\cite[Cor. 4.2]{FMperiods}}]\label{ex.holomorphiccartancalculus} 
Let $(A_X^{*,*},d)$ denote the Rham complex  of a  complex manifold $X$ and denote by 
$F^0_X\supseteq F^1_X\supseteq\cdots$ its Hodge filtration:
\[ F^p_X=\bigoplus_{i\ge p}A_X^{i,*}.\] 
Let $KS_X=(A_X^{0,*}(\Theta_X),-\debar,[-,-])$ be the Kodaira-Spencer differential graded Lie algebra of 
$X$; then the standard Cartan homotopy formulas  implies that the contraction operator 
\[ \bi\colon KS_X\to \End^*_{\mathbb{C}}(A_X^{*,*}),\qquad \bi_{\eta}(\omega)=\eta\contr\omega,\]
is a Cartan homotopy with boundary $\bl_\eta=[\de,\bi_{\eta}]$ the holomorphic Lie derivative.
Since $\bl_a(F^p_X)\subseteq F^p_X$ for every $a\in KS_X$ and every $p\ge 0$, we  obtain that 
$(KS_X,A_X^{*,*},F^p_X,\bi)$ is a  period datum for every $p\geq 0$,
and therefore a $p$-th period map $
\mathcal{P}_p\colon \Def_{X}\to Grass_{A_X^{*,*},F^p_X}$,
where $\Def_{X}$ denotes the functor of infinitesimal deformations of $X$ (which is canonically isomorphic to the deformation functor $\Def_{KS_X}$).
If the inclusion $F^p_X\to A_X^{*,*}$ is injective in cohomology, e.g., if the Hodge to de Rham spectral sequence of $X$ degenerates at $E_1$ as it happens for instance for compact K\"ahler manifolds, then the homotopy invariance of  $\mathfrak{Grass}$ from Remark \ref{rem.homotopy-invariance-grass} implies that there exists a natural 
isomorphism of Grassmann functors $Grass_{A_X^{*,*},F^p_X}\cong Grass_{H^*(A_X^{*,*}),H^*(F^p_X)}$. In this case the map $\mathcal{P}_p$ is therefore a morphism
\[ \mathcal{P}_p\colon \Def_X\to Grass_{H^*(A_X^{*,*}),H^*(F^p_X)},\]
and it is proved in \cite{FMperiods} that this map is precisely the Griffiths's $p$-th period map for infinitesimal deformations of $X$.
\end{example}

Under certain assumption, 
formal period maps can be conveniently used in order to prove the vanishing to obstructions to deformations.

\begin{proposition}\label{prop.abstractkodairaprinciple} 
Let $(\mathfrak{g},V,F,\bi)$ be a period datum such that the map $H^*(F)\to H^*(V)$ is injective. Then, the obstructions of the functor $\Def_\mathfrak{g}$ are contained in the kernel of 
\[H^2(\bi)\colon H^2(\mathfrak{g})\to \Hom^{1}_{\K}(H^*(F),H^*(V/F)).\]
Moreover, if $\bi\colon H^*(\mathfrak{g})\to \Hom^{*}_{\K}(H^*(F),H^*(V/F))[-1]$ is injective, then 
$\mathfrak{g}$ is homotopy abelian.
\end{proposition}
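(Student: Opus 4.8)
The plan is to deduce both assertions from the period map $\mathcal{P}\colon \mathfrak{g}\to \mathfrak{Grass}(V;F)$ of Remark~\ref{rem.periods}, exploiting the homotopy abelianity of its target. First I would observe that, since by hypothesis $H^*(F)\to H^*(V)$ is injective, Proposition~\ref{prop.injectiveincohomologygrassabelian} guarantees that $\mathfrak{Grass}(V;F)$ is homotopy abelian, hence that $\Def_{\mathfrak{Grass}(V;F)}$ is unobstructed. The period map is a morphism, in the homotopy category of DG-Lie algebras, from $\mathfrak{g}$ into this unobstructed deformation theory, so by the general principle that obstruction maps are compatible with the maps induced in cohomology (see \cite{ManettiSeattle}) every obstruction of $\Def_{\mathfrak{g}}$ lies in the kernel of $H^2(\mathcal{P})\colon H^2(\mathfrak{g})\to H^2(\mathfrak{Grass}(V;F))$.

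The next step is to identify $H^2(\mathcal{P})$ with $H^2(\bi)$. Since $\End^*(V;F)\hookrightarrow \End^*(V)$ is injective, Remark~\ref{rem.cartanincohomologia} together with Lemma~\ref{lemma.tw-quotient} identifies $H^*(\mathfrak{Grass}(V;F))$ with $H^{*-1}(\End^*(V)/\End^*(V;F))$ in such a way that $H^*(\mathcal{P})$ becomes the map induced by the morphism of complexes $\bi\colon \mathfrak{g}\to (\End^*(V)/\End^*(V;F))[-1]$. I would then note the canonical isomorphism of complexes $\End^*(V)/\End^*(V;F)\cong \Hom^*_{\K}(F,V/F)$ obtained by restricting an endomorphism to $F$ and projecting onto $V/F$: its kernel is exactly $\End^*(V;F)$, and it is surjective by choosing a graded complement of $F$ in $V$. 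As $\K$ is a field, the Künneth formula gives $H^*(\Hom^*_{\K}(F,V/F))\cong \Hom^*_{\K}(H^*(F),H^*(V/F))$, and in the relevant degree this turns $H^2(\mathcal{P})$ precisely into $H^2(\bi)\colon H^2(\mathfrak{g})\to \Hom^1_{\K}(H^*(F),H^*(V/F))$. Combined with the previous paragraph, this yields the first assertion.

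For the second assertion, I would use the span presentation of $\mathcal{P}$ from Proposition~\ref{def.from.cartan}, namely
\[
\mathfrak{g}\xleftarrow{\ \sim\ } TW(\mathfrak{g}\hookrightarrow \cone(\Id_{\mathfrak{g}}))\xrightarrow{\ \rho\ }\mathfrak{Grass}(V;F),
\]
whose left leg is a quasi-isomorphism. The additional hypothesis that $\bi\colon H^*(\mathfrak{g})\to \Hom^*_{\K}(H^*(F),H^*(V/F))[-1]$ is injective says, via the identification of the middle paragraph, that $\mathcal{P}$ is injective in cohomology, hence that $\rho$ is injective in cohomology. Since $\mathfrak{Grass}(V;F)$ is homotopy abelian, Lemma~\ref{prop.iniettivoincohomologiaversohomotopyabelian}(1) applied to $\rho$ shows that $TW(\mathfrak{g}\hookrightarrow \cone(\Id_{\mathfrak{g}}))$ is homotopy abelian, and therefore so is $\mathfrak{g}$, being quasi-isomorphic to it.

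I expect the only genuinely delicate point to be the bookkeeping in the second paragraph: making sure the degree shift in $H^*(\mathfrak{Grass}(V;F))\cong H^{*-1}(\End^*(V)/\End^*(V;F))$ and the Künneth identification conspire so that $H^2(\mathcal{P})$ lands in $\Hom^1_{\K}(H^*(F),H^*(V/F))$ exactly as in the statement, and that the chain map induced on the cokernel really is $\bi$ rather than its boundary $\bl$. Everything else is a direct invocation of the results already established.
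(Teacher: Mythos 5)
Your proof is correct and follows essentially the same route as the paper's: homotopy abelianity of $\mathfrak{Grass}(V;F)$ via Proposition~\ref{prop.injectiveincohomologygrassabelian}, identification of the period map in cohomology with $H^*(\bi)$ via Remark~\ref{rem.cartanincohomologia}, and the final homotopy abelianity of $\mathfrak{g}$ via Lemma~\ref{prop.iniettivoincohomologiaversohomotopyabelian}. The extra details you supply (the isomorphism $\End^*(V)/\End^*(V;F)\cong \Hom^*_{\K}(F,V/F)$, the K\"unneth identification, and the explicit use of the span presentation to transfer injectivity in cohomology) are exactly the steps the paper leaves implicit.
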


\begin{proof} Since the inclusion $F\hookrightarrow V$ is injective in cohomology, according to Proposition~\ref{prop.injectiveincohomologygrassabelian},  
the differential graded Lie algebra $\mathfrak{Grass}(V,F)$ is homotopy abelian. By Remark \ref{rem.cartanincohomologia}, the morphism
\[
H^*(\mathfrak{p})\colon H^*(\mathfrak{g})\to H^*(\mathfrak{Grass}(V,F))
\]
induced by the period map $\mathcal{P}\colon \mathfrak{g}\to \mathfrak{Grass}(V,F)$ is identified with the morphism 
\[ H^*(\bi)\colon H^*(\mathfrak{g})\to H^*(\Coker(\End^*(V;F)\hookrightarrow \End^*(V))[-1])\cong \Hom^{*-1}_{\K}(H^*(F),H^*(V/F))\;.
\]
In particular, the map $\bi\colon H^2(\mathfrak{g})\to \Hom^{1}_{\K}(H^*(F),H^*(V/F))$ maps obstructions of $\Def_{\mathfrak{g}}$ to obstructions of $Grass_{V,F}$, which are trivial since is $\mathfrak{Grass}(V,F)$ is homotopy abelian.
Finally, if the map $H^*(\bi)\colon H^*(\mathfrak{g})\to \Hom^{*-1}_{\K}(H^*(F),H^*(V/F))$ is injective, the homotopy abelianity of $\mathfrak{g}$ follows from Proposition~\ref{prop.iniettivoincohomologiaversohomotopyabelian}.
\end{proof}

\begin{example}[{\cite{algebraicBTT}, Thm. B}] Let $X$ be a smooth projective complex manifold. Then the obstruction to infinitesimal deformations of $X$ are contained in the intersection
\[
\bigcap_{p}\ker\left\{H^2(\bi)\colon H^2(X,\Theta_X)\to\Hom^{1}_{\K}(H^*(F^p_X,H^*(A_X^{*,*}/F^p_X))\right\}.
\]
In particular, if $X$ is a Calabi-Yau manifold then $\Def_X$ is unobstructed. Actually more is true: if $X$ is a Calabi-Yau manifold then $KS_X$ is homotopy abelian.
\end{example}

\begin{definition} A \emph{formal Abel-Jacobi datum} is a tuple $(\mathfrak{g},\tilde{\mathfrak{g}},V,F,v,\bi)$ where
$(\mathfrak{g},V,F,\bi)$ is a formal period datum, $v\in Z^0(F)$ is a $0$-cocycle in $F$, and
$\tilde{\mathfrak{g}}$ is a DG-Lie subalgebra of $\mathfrak{g}$ such that $\bi_x(v)=0$ for every $x\in \tilde{\mathfrak{g}}$.
\end{definition}

\begin{lemma}\label{lemma.lv}
Let $\bi:\mathfrak{g}\to \mathrm{End}(V)[-1]$ be a Cartan homotopy and let $v$ be a degree zero closed element in $V$. Then 
\begin{align*}
{\bi}^v:\mathfrak{g}&\to\mathrm{Aff}(V)[-1]\\
x&\mapsto (\bi_x,-\bi_x(v))
\end{align*}
is a Cartan homotopy. The corresponding Lie derivative is
\begin{align*}
{\bl}^v:\mathfrak{g}&\to\mathrm{Aff}(V)\\
x&\mapsto (\bl_x,-\bl_x(v))
\end{align*}
\end{lemma}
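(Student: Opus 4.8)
The plan is to recognize the map $\bi^v$ as the composition of the given Cartan homotopy $\bi$ with a \emph{strict} morphism of differential graded Lie algebras, and then to invoke the elementary fact that post-composing a Cartan homotopy with a DG-Lie algebra morphism again yields a Cartan homotopy. Concretely, recall from Remark~\ref{rem.sezionidistorte} that the closed degree zero element $v\in Z^0(V)$ determines a DG-Lie algebra section $\sigma_v\colon \End^*(V)\to \Aff(V)$ of the projection $\Aff(V)\to \End^*(V)$, given by $\sigma_v(\xi)=(\xi,-\xi(v))$. Comparing with the formula for $\bi^v$, one sees immediately that $\bi^v_x=(\bi_x,-\bi_x(v))=\sigma_v(\bi_x)$, that is, $\bi^v=\sigma_v\circ\bi$ as a degree $-1$ map $\mathfrak{g}\to \Aff(V)$.

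First I would isolate the following general statement: if $\bi\colon \mathfrak{g}_0\to \mathfrak{g}_1[-1]$ is a Cartan homotopy with boundary $\bl$ and $\psi\colon \mathfrak{g}_1\to \mathfrak{g}_2$ is a morphism of DG-Lie algebras, then $\psi\circ\bi\colon \mathfrak{g}_0\to \mathfrak{g}_2[-1]$ is a Cartan homotopy with boundary $\psi\circ\bl$. Both Cartan identities of Definition~\ref{def.cartanhomotopy} then follow directly from the fact that $\psi$ preserves the bracket and commutes with the differential: for the first identity, $[(\psi\bi)_x,(\psi\bi)_y]=\psi([\bi_x,\bi_y])=0$; for the second, $(\psi\bi)_{[x,y]}=\psi(\bi_{[x,y]})=\psi([\bi_x,d\bi_y])=[(\psi\bi)_x,d(\psi\bi)_y]$, where the last equality uses $\psi\circ d=d\circ\psi$. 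The statement about the boundary is equally immediate, since
\[
d(\psi\circ\bi)+(\psi\circ\bi)\circ d=\psi\circ(d\bi+\bi\, d)=\psi\circ\bl,
\]
where again we use that $\psi$ commutes with the differentials.

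Finally I would apply this general fact to $\psi=\sigma_v$, which is a DG-Lie algebra morphism precisely because $v\in Z^0(V)$ (Remark~\ref{rem.sezionidistorte}). This shows at once that $\bi^v=\sigma_v\circ\bi$ is a Cartan homotopy and that its boundary equals $\sigma_v\circ\bl$, that is, $\bl^v_x=\sigma_v(\bl_x)=(\bl_x,-\bl_x(v))$, exactly as claimed. The only point requiring any care is the verification of the general composition fact, but this is genuinely routine: because $\sigma_v$ is a strict morphism it neither alters the bracket nor introduces additional Koszul signs, so no delicate sign bookkeeping is needed. The main conceptual step, and the one that renders the whole argument nearly automatic, is the identification $\bi^v=\sigma_v\circ\bi$.
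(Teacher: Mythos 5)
Your proposal is correct and is essentially the paper's own proof: the paper also identifies $\bi^v=\sigma_v\circ\bi$ with the DG-Lie section $\sigma_v$ of Remark~\ref{rem.sezionidistorte} and concludes that composition with a DG-Lie morphism preserves Cartan homotopies and sends the boundary $\bl$ to $\sigma_v\circ\bl$. The only difference is that you spell out the routine verification of this composition fact, which the paper leaves implicit.
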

\begin{proof}
The linear map $\bi^v$ is the composition of the Cartan homotopy $\bi$ with the DG-Lie morphism $\sigma_v$ of 
Remark~\ref{rem.sezionidistorte}, hence it is a Cartan homotopy. The corresponding coboundary  is the composition of $\bl$ with $\sigma_v$.
\end{proof}

\begin{proposition}\label{proposition.cartan-from-v}
Let $(\mathfrak{g},\tilde{\mathfrak{g}},V,F,v,\bi)$ be a formal Abel-Jacobi datum.  Then,
the diagram
\begin{equation}\label{equ.abeljacobicube}
\xymatrix@!0{
& & \mathfrak{g} \ar[rrrr]^-{{\bl}^v}\ar'[d][dd]
& & & &{\mathrm{Aff}}(V;F) \ar[dd]
\\
\tilde{\mathfrak{g}} \ar[urr]\ar[rrrr]^-{
{\bl}\big\vert_{\tilde{\mathfrak{g}}}}\ar[dd]
& & & &\mathrm{End}(V;F) \ar[urr]_{\sigma_0}\ar[dd]
\\
& &\text{\phantom{i}\small{\rm cone}}(\mathrm{Id}_{\mathfrak{g}}) \ar'[rr][rrrr]
& &  & &{\mathrm{Aff}}(V)
\\
\text{\small{\rm cone}}(\mathrm{Id}_{\tilde{\mathfrak{g}}})\ar@{-}[rrrr]\ar[urr]
& & & &\mathrm{End}(V) \ar[urr]_{\sigma_0}
}
\end{equation}
where $\tilde{\mathfrak{g}}\to \mathfrak{g}$ is the inclusion, is a morphism of Cartan calculi.
\end{proposition}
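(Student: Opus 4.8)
The plan is to check directly that the cube \eqref{equ.abeljacobicube} satisfies the definition of a morphism of Cartan calculi given in Remark~\ref{rem.morphism-of-Cartan-calculi-to-span}. Concretely this breaks into two tasks: first, showing that the front face (with vertices $\tilde{\mathfrak{g}}$, $\End^*(V;F)$, $\cone(\mathrm{Id}_{\tilde{\mathfrak{g}}})$, $\End^*(V)$) and the back face (with vertices $\mathfrak{g}$, $\Aff(V;F)$, $\cone(\mathrm{Id}_{\mathfrak{g}})$, $\Aff(V)$) are each a Cartan calculus in the sense of Definition~\ref{def.cartancalculus}; and second, checking that the four remaining faces commute through the stated DG-Lie morphisms.

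First I would dispose of the two distinguished faces. For the front face, the restriction $\bi\vert_{\tilde{\mathfrak{g}}}$ of $\bi$ to the DG-Lie subalgebra $\tilde{\mathfrak{g}}$ is again a Cartan homotopy, since the Cartan identities pass to subalgebras, with boundary $\bl\vert_{\tilde{\mathfrak{g}}}$; the period datum condition $\bl_x(F)\subseteq F$ ensures this boundary factors through $\End^*(V;F)$, so by Lemma~\ref{lem.universalCartanhomotopy} the front face is a Cartan calculus. For the back face, Lemma~\ref{lemma.lv} already provides that $\bi^v$ is a Cartan homotopy with boundary $\bl^v_x=(\bl_x,-\bl_x(v))$; since $v\in F$ and $\bl_x(F)\subseteq F$ give $-\bl_x(v)\in F$, the boundary $\bl^v$ lands in $\Aff(V;F)\cong\End^*(V;F)\oplus F$ under the identification of Remark~\ref{rem.aff}, and Lemma~\ref{lem.universalCartanhomotopy} again exhibits the back face as a Cartan calculus.

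The left and right side faces commute for purely formal reasons: the left face compares the two mapping-cone inclusions and commutes by functoriality of $\cone(\varphi)=\varphi\oplus\varphi[1]$, while the right face compares the two occurrences of $\sigma_0\colon\xi\mapsto(\xi,0)$ with the inclusions $\End^*(V;F)\hookrightarrow\End^*(V)$ and $\Aff(V;F)\hookrightarrow\Aff(V)$, both composites being $\xi\mapsto(\xi,0)$. The entire content of the statement is therefore concentrated in the top and bottom faces, and I expect the single identity
\[
\bl_x(v)=d_V\bigl(\bi_x(v)\bigr)+\bi_{dx}(v)=0\qquad\text{for every }x\in\tilde{\mathfrak{g}}
\]
to be the one nonformal point. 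Here I use $\bl_x=[d_V,\bi_x]+\bi_{dx}$ together with $d_Vv=0$, so that the term involving $\bi_x(d_Vv)$ drops out; then $\bi_x(v)=0$, the defining Abel-Jacobi hypothesis, kills the first summand, and closure of $\tilde{\mathfrak{g}}$ under the differential gives $dx\in\tilde{\mathfrak{g}}$, whence $\bi_{dx}(v)=0$ kills the second.

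Granting this vanishing, the top and bottom faces follow immediately. On $\tilde{\mathfrak{g}}$ the top face reads $\bl^v(x)=(\bl_x,-\bl_x(v))=(\bl_x,0)=\sigma_0(\bl_x)$, and the bottom face agrees on the $\tilde{\mathfrak{g}}$-summand of $\cone(\mathrm{Id}_{\tilde{\mathfrak{g}}})$ by the same computation, and on the $\flat$-summand because $\bi^v(x)=(\bi_x,-\bi_x(v))=(\bi_x,0)=\sigma_0(\bi_x)$, now using $\bi_x(v)=0$ directly. Thus the main obstacle is isolating and proving $\bl_x(v)=0$ on $\tilde{\mathfrak{g}}$; every other verification is bookkeeping with the explicit formulas for $\sigma_0$, $\cone(\varphi)$, and the cone differential, and with the decomposition $\Aff(V)\cong\End^*(V)\oplus V$ of Remark~\ref{rem.aff}.
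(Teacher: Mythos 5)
Your proposal is correct and follows essentially the same route as the paper: the whole content is concentrated in the top and bottom faces, the bottom face reduces to $\bi^v(x)=(\bi_x,-\bi_x(v))=(\bi_x,0)=\sigma_0(\bi_x)$ via the hypothesis $\bi_x(v)=0$, and the top face reduces to the identity $\bl_x(v)=d_V(\bi_x(v))\pm\bi_x(d_Vv)+\bi_{d_{\mathfrak{g}}x}(v)=0$, killed exactly by the three facts you cite ($v$ closed, $x\in\tilde{\mathfrak{g}}$, $d_{\mathfrak{g}}x\in\tilde{\mathfrak{g}}$). The extra bookkeeping you include (front and back faces being Cartan calculi, commutativity of the side faces) is treated as implicit in the paper's proof, which simply states that only the upper and lower faces require verification.
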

\begin{proof}
One only needs to prove the commutativity of the upper and of the lower face of the cube. 
The commutativity of the lower face of the cube is equivalent to the commutativity of the the diagram of graded vector spaces
\[
\xymatrix{
\tilde{\mathfrak{g}}\ar[d]\ar[r]^-{{\bi}\big\vert_{\tilde{\mathfrak{g}}}}&\mathrm{End}(V)[-1]\ar[d]^{\sigma_0[-1]}\\
\mathfrak{g}\ar[r]^-{{\bi}^v}&\mathrm{Aff}(V)[-1]\,.
}
\]
For $x$ in $\tilde{\mathfrak{g}}$ we have $\bi_x(v)=0$ and then $\bi^v(x)=
(\bi_x,-\bi_x(v))= (\bi_x,0)=\sigma_0(\bi_x)$. The commutativity of the upper face follows similarly, by noticing that \[
\bl_x(v)=d_V(\bi_x(v))-(-1)^{\overline{x}}\bi_x(d_Vv)+\bi_{d_{\mathfrak{g}}x}(v)=0,
\]
since $v$ is closed and both $x$ and $d_{\mathfrak{g}}x$ are in $\tilde{\mathfrak{g}}$.
\end{proof}

\begin{proposition}\label{prop.tangent-obstruction-aj}
A formal Abel-Jacobi datum $(\mathfrak{g},\tilde{\mathfrak{g}},V,F,v,\bi)$ induces  a natural morphism 
\[
\mathcal{AJ}\colon hofib(\tilde{\mathfrak{g}}\hookrightarrow \mathfrak{g})\to \mathfrak{J}(V,F)\]
in the homotopy category of DG-Lie algebras. The linear morphism induced in cohomology by the map $\mathcal{AJ}$ is canonically identified with the morphism
\begin{align*}
H^{*-1}(\mathfrak{g}/\tilde{\mathfrak{g}})&\to H^{*-2}(V/F)\\
[x]&\mapsto -[\bi_{\tilde{x}}(v)\mod F].
\end{align*}
where $\tilde{x}$ is any representative of $[x]$ in $\mathfrak{g}$. 
\end{proposition}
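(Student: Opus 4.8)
The plan is to obtain $\mathcal{AJ}$ as the map on homotopy fibers induced by the morphism of Cartan calculi of Proposition~\ref{proposition.cartan-from-v}, and then to read off its effect on cohomology from Proposition~\ref{prop.double-fiber-in-cohomology}. Concretely, I would feed the cube~\eqref{equ.abeljacobicube} into the construction of Remark~\ref{rem.morphism-of-Cartan-calculi-to-span}, which produces functorially a morphism $hofib(\tilde{\mathfrak{g}}\hookrightarrow\mathfrak{g})\to hofib^{[2]}(\mathfrak{g}_{1\bullet\bullet})$ in the homotopy category of DG-Lie algebras, where $\mathfrak{g}_{1\bullet\bullet}$ is the square with vertices $\End^*(V;F),\Aff(V;F),\End^*(V),\Aff(V)$ appearing on the right-hand side of~\eqref{equ.abeljacobicube}. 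This square is the transpose of the Jacobian square $\mathcal{J}(V,F)_{\bullet\bullet}$ of Definition~\ref{def.jacobian-dgla} (it carries $\sigma_0$ horizontally and the $F$-preserving inclusion vertically, rather than the other way around), and since the double homotopy fiber is insensitive to transposing the two square directions by Remark~\ref{rem.doppiafibra}, its target is canonically $\mathfrak{J}(V,F)$. This defines $\mathcal{AJ}$.

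For the cohomology statement I would apply Proposition~\ref{prop.double-fiber-in-cohomology}, so I only need to verify its two hypotheses. That $\tilde{\mathfrak{g}}\hookrightarrow\mathfrak{g}$ is the inclusion of a DG-Lie subalgebra is part of the definition of a formal Abel-Jacobi datum. That the Jacobian square is a pullback of inclusions is the one genuine verification: using the identifications $\Aff(V)\cong\End^*(V)\oplus V$ and $\Aff(V;F)\cong\End^*(V;F)\oplus F$ of Remark~\ref{rem.aff}, one checks at once that $\Aff(V;F)\times_{\Aff(V)}\End^*(V)=\End^*(V;F)$, all four maps being inclusions. Proposition~\ref{prop.double-fiber-in-cohomology} then identifies the induced cohomology map with
\[
H^{*-1}(\mathfrak{g}/\tilde{\mathfrak{g}})\to H^{*-2}\!\left(\Aff(V)/(\End^*(V)+\Aff(V;F))\right),\qquad [x]\mapsto[\bi^v_{\tilde{x}}\mod(\End^*(V)+\Aff(V;F))],
\]
where $\bi^v$ is the Cartan homotopy of Lemma~\ref{lemma.lv} carried by the target Cartan calculus.

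It then remains to simplify the target and compute the class. Again by Remark~\ref{rem.aff}, inside $\Aff(V)=\End^*(V)\oplus V$ one has $\End^*(V)+\Aff(V;F)=\End^*(V)\oplus F$, so the projection $(\xi,w)\mapsto w\mod F$ realizes the identification $\Aff(V)/(\End^*(V)+\Aff(V;F))\cong V/F$; this is precisely the identification used in Lemma~\ref{lemma.J-easy} and Corollary~\ref{cor.cohomology-minus-2}, so it is compatible with the isomorphism $H^*(\mathfrak{J}(V,F))\cong H^{*-2}(V/F)$. Since $\bi^v_{\tilde{x}}=(\bi_{\tilde{x}},-\bi_{\tilde{x}}(v))$ by Lemma~\ref{lemma.lv}, its image in $V/F$ is $-\bi_{\tilde{x}}(v)\mod F$, which gives the asserted formula. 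The proof is therefore essentially an assembly of earlier results; the only steps requiring attention are the bookkeeping identifying the target square with $\mathcal{J}(V,F)$ and the check that the two descriptions of $V/F$ coincide, and I do not expect either to be a serious obstacle.
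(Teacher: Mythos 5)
Your proposal is correct and follows essentially the same route as the paper's own proof: the morphism $\mathcal{AJ}$ is obtained by feeding the cube \eqref{equ.abeljacobicube} of Proposition~\ref{proposition.cartan-from-v} into Remark~\ref{rem.morphism-of-Cartan-calculi-to-span}, and the cohomology formula follows from Proposition~\ref{prop.double-fiber-in-cohomology} together with the identification $\Aff(V)/(\Aff(V;F)+\End^*(V))\cong V/F$ of Lemma~\ref{lemma.J-easy}. Your additional bookkeeping (the transposed square handled via Remark~\ref{rem.doppiafibra}, the pullback-of-inclusions check, and the computation $\bi^v_{\tilde{x}}=(\bi_{\tilde{x}},-\bi_{\tilde{x}}(v))$) simply makes explicit what the paper leaves implicit in citing those results.
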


\begin{proof}
The first part of the statement is immediate from Remark \ref{rem.morphism-of-Cartan-calculi-to-span} and from the definition of $\mathfrak{J}(V,F)$, i.e., from Definition \ref{def.jacobian-dgla}. The second part is a particular case of Proposition \ref{prop.double-fiber-in-cohomology}, together with the quasi-isomorphisms 
\begin{align*}
\Aff(V)/(\Aff(V;F)+\End(V))&\xrightarrow{\sim}V/F\\
[(\xi,v)]&\mapsto [v \mod F]
\end{align*}
from Lemma \ref{lemma.J-easy}.
\end{proof}

\begin{remark}
The above construction tells us in particular that the element $\bi_{\tilde{x}}v$ is closed in the quotient $V/F$ and that it is independent of the choice of the representative $\tilde{x}$. It is a simple exercise to verify directly both of these properties. Namely, to say that $\bi_{\tilde{x}}v$ is closed in $V/F$ amounts to saying that $d_{V}(\bi_{\tilde{x}}v)$ is an element of $F$, and we have
\begin{align*}
d_{V}(\bi_{\tilde{x}}v)&=-\bi_{d_{L}\tilde{x}}(v)+\bl_{\tilde{x}}(v)-(-1)^x \bi_{\tilde{x}}(d_{V}v)\\
&=-\bi_{d_{L}\tilde{x}}(v)+\bl_{\tilde{x}}(v).
\end{align*}
since by hypothesis $v$ is a closed element in $V$. Also, the element $\bl_{\tilde{x}}(v)$ lies in $F$ since $\bl_{\tilde{x}}^v$ is an element of $\mathrm{Aff}(V;F)$. So we are reduced to show that also $\bi_{d_{\mathfrak{g}}\tilde{x}}(v)$ lies in $F$. Since $\tilde{x}$ is a representative in $\mathfrak{g}$ of the class $[x]$ in $\mathfrak{g}/\tilde{\mathfrak{g}}$, the element $\tilde{x}$ is in particular closed in the quotient $\mathfrak{g}/\tilde{\mathfrak{g}}$, i.e. $d_{\mathfrak{g}}\tilde{x}$ is an element of $\tilde{\mathfrak{g}}$. Therefore, $\bi_{d_{\mathfrak{g}}\tilde{x}}(v)=0$ by definition of $\tilde{\mathfrak{g}}$.
To see that the cohomology class $[\bi_{\tilde{x}}(v)\mod F]$ is indeed well defined, let $\tilde{x}$ be a representative of the zero class in $H^{n-1}(\mathfrak{g}/\tilde{\mathfrak{g}})$. This means that $\tilde{x}=d_{\mathfrak{g}}y+z$ for some $y\in \mathfrak{g}$ and $z\in \tilde{\mathfrak{g}}$. Then $\bi_{\tilde{x}}(v)=\bi_{d_{\mathfrak{g}}y}v=\bl_y(v)-d_{V}(\bi_y(v))$, since $z\in \tilde{\mathfrak{g}}$ and $v$ is closed. Now $\bl_y(v)\in F$ since $\bl_y^v\in\mathrm{Aff}(V)$ and so $\bi_{\tilde{x}}(v)$ is exact in $V/F$.
\end{remark}

\begin{definition}
We will call \emph{formal Abel-Jacobi map} associated with the formal Abel-Jacobi datum $(\mathfrak{g},\tilde{\mathfrak{g}},V,F,v,\bi)$ both the map $\mathcal{AJ}\colon hofib(\tilde{\mathfrak{g}}\hookrightarrow \mathfrak{g})\to \mathfrak{J}(V,F)$ in the homotopy category of DG-Lie algebras and the associated morphism of deformation functors
\[
\mathcal{AJ}\colon\Def_{\tilde{\mathfrak{g}}\hookrightarrow \mathfrak{g}}\to Jac(V;F).
\]
\end{definition}

\bigskip
\section{The infinitesimal Abel-Jacobi map of a submanifold}

In the previous section we defined an abstract notion of Abel-Jacobi map as a morphism of deformation theories, described by a morphism in the homotopy category of differential graded Lie algebras. 
Here we describe a geometric situation where the above construction applies, and which motivates the name ``formal Abel-Jacobi map'' we gave to this construction.
In this section we work over the field $\C$ of complex numbers.

For a smooth complex manifold $X$ of dimension $n$, we shall denote by   
$D^{p,q}_X$ the space of currents of type $(p,q)$ on $X$. 
We recall, see \cite[Section 3.1]{G-H}, that 
$D_X^{*,*}=\oplus_{p,q}D^{p,q}_X$ is the  topological dual, in the $C^{\infty}$ topology, of the space $A_{X,c}^{*,*}$ of differential forms  with compact support on the smooth complex manifold $X$, and   
$D^{p,q}_X\subseteq D_X^{*,*}$ is the subspace  of functionals vanishing on differential forms of type $(i,j)$, for every 
$(i,j)\not=(n-p,n-q)$.

Recall also that there exists a natural inclusion 
$A^{p,q}_X\subseteq D^{p,q}_X$, where
a differential form $\omega\in A^{p,q}_X$ is identified with the current (which we denote by the same symbol $\omega$) 
\[ \omega\colon A_{X,c}^{*,*}\to \C,\qquad \psi\mapsto \int_X \omega\wedge \psi\;.\]

\begin{lemma}\label{lem.extendingderivations} 
Let $\alpha$ be a continuous derivation of bidegree $(p,q)$ of the de Rham algebra $A^{*,*}_X$ such that 
$\int_X\circ \alpha=0$. Then $\alpha$ can be naturally extended to a linear operator $\hat{\alpha}$ of bidegree $(p,q)$
on the space of currents by the rule  
\[
\hat{\alpha}(T)\colon \eta\mapsto T(\alpha^*(\eta)), 
\]
where $\alpha^*\colon A^{i,j}_{X,c}\to A^{i+p,j+q}_{X,c}$ is defined by $\alpha^*(\eta)=-(-1)^{(p+q)(i+j+p+q)}\alpha(\eta)$.
\end{lemma}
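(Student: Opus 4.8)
The plan is to check three things: that the assignment $T\mapsto T\circ\alpha^*$ is well defined on currents, that it has the asserted bidegree, and — the point that the whole sign convention is engineered to produce — that it agrees with $\alpha$ on the forms sitting inside $D^{*,*}_X$. First I would note that a continuous derivation of the de Rham algebra is a local operator: if $\omega$ vanishes on an open set $U$ then for any bump function $f$ supported in $U$ with $f(x)=1$ one has $f\omega=0$, whence $0=\alpha(f\omega)=\alpha(f)\wedge\omega+f\,\alpha(\omega)$ gives $\alpha(\omega)(x)=0$; so $\operatorname{supp}(\alpha\omega)\subseteq\operatorname{supp}(\omega)$. In particular $\alpha$ maps compactly supported forms to compactly supported forms, so $\alpha^*\colon A^{i,j}_{X,c}\to A^{i+p,j+q}_{X,c}$ is genuinely defined and continuous. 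Consequently, for every current $T$ the functional $\eta\mapsto T(\alpha^*(\eta))$ is linear and continuous, hence again a current, and $T\mapsto \hat{\alpha}(T)$ is manifestly linear; so $\hat{\alpha}$ is a well-defined linear operator on $D^{*,*}_X$.

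For the bidegree, if $T\in D^{a,b}_X$ then $T$ is supported on test forms of bidegree $(n-a,n-b)$, and $T(\alpha^*(\eta))$ can be nonzero only when $\alpha^*(\eta)$ — which has bidegree $(i+p,j+q)$ for $\eta$ of bidegree $(i,j)$ — is of bidegree $(n-a,n-b)$, i.e. only when $\eta$ has bidegree $(n-(a+p),n-(b+q))$. Thus $\hat{\alpha}(T)\in D^{a+p,b+q}_X$, and $\hat{\alpha}$ has bidegree $(p,q)$.

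The heart of the matter is the compatibility $\hat{\alpha}(\omega)=\alpha(\omega)$ for a form $\omega\in A^{a,b}_X$, where on the left $\omega$ is regarded as the current $T_\omega\colon\psi\mapsto\int_X\omega\wedge\psi$. Writing $m=p+q$ and $d=a+b$, the graded Leibniz rule for the degree-$m$ derivation $\alpha$ gives, for any compactly supported test form $\eta$,
\[
\alpha(\omega\wedge\eta)=\alpha(\omega)\wedge\eta+(-1)^{m d}\,\omega\wedge\alpha(\eta).
\]
Because $\eta$ is compactly supported, so is $\omega\wedge\eta$, and when $\eta$ has complementary bidegree the form $\alpha(\omega\wedge\eta)$ has bidegree $(n,n)$; the hypothesis $\int_X\circ\,\alpha=0$ therefore yields
\[
\int_X\alpha(\omega)\wedge\eta=-(-1)^{md}\int_X\omega\wedge\alpha(\eta)=\int_X\omega\wedge\bigl(-(-1)^{md}\alpha(\eta)\bigr).
\]
It remains to match the sign with the definition of $\alpha^*$: for $\eta$ of bidegree $(i,j)=(n-a-p,n-b-q)$ one has $i+j+p+q=2n-d$, and since $2n$ is even $(-1)^{(p+q)(i+j+p+q)}=(-1)^{md}$, so that $-(-1)^{md}\alpha(\eta)=\alpha^*(\eta)$. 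Hence $\int_X\alpha(\omega)\wedge\eta=T_\omega(\alpha^*(\eta))=\hat{\alpha}(T_\omega)(\eta)$, while the left-hand side is exactly $T_{\alpha(\omega)}(\eta)$; this proves $\hat{\alpha}(T_\omega)=T_{\alpha(\omega)}$. The only delicate step is precisely this sign bookkeeping — the factor in $\alpha^*$ is chosen so that $\alpha$ becomes formally self-adjoint for the pairing $(\omega,\eta)\mapsto\int_X\omega\wedge\eta$ modulo the hypothesis $\int_X\circ\,\alpha=0$ — together with the opening remark on locality that guarantees $\alpha^*$ preserves compact supports.
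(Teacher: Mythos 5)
Your proof is correct and follows essentially the same route as the paper's: locality of derivations via bump functions (hence preservation of compact supports), the immediate bidegree check, and the graded Leibniz rule combined with the hypothesis $\int_X\circ\alpha=0$ to identify $\hat{\alpha}(T_\omega)$ with $T_{\alpha(\omega)}$. The only difference is that you spell out the bump-function argument and the sign bookkeeping (matching $(-1)^{(p+q)(i+j+p+q)}$ with $(-1)^{md}$ in the complementary bidegree) which the paper compresses into the phrase ``standard use of bump functions'' and the formula $\alpha(\omega\wedge\eta)=\alpha(\omega)\wedge\eta-\omega\wedge\alpha^*(\eta)$.
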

\begin{proof} By a  standard use of bump functions we have 
that every derivation of $A^{*,*}_X$ is a local operator and therefore preserves the subspace of differential forms with compact support.

One clearly sees that $\hat{\alpha}$ is a linear operator from $D^{i,j}_{X}$ to $D^{i+p,j+q}_{X}$ so the only nontrivial part to be shown is that $\hat{\alpha}$ actually extends $\alpha$, i.e., that for every $\omega\in A^{*,*}_X$ and $\eta\in A^{*,*}_{X,c}$ we have
\[
\hat{\alpha}(\omega)\eta=\int_X\omega\wedge\alpha^*(\eta)=\int_X\alpha(\omega)\wedge\eta.
\]
This immediately follows from the Leibniz rule $\alpha(\omega\wedge\eta)=\alpha(\omega)\wedge\eta-\omega\wedge\alpha^*(\eta)$ together with the assumption $\int_X\circ \alpha=0$.
\end{proof}

\begin{lemma}\label{lemma.anticommutators}
Let $\alpha,\beta$ be continuous derivations of the de Rham algebra $A^{*,*}_X$ such that 
$\int_X\circ \alpha=\int_X\circ \beta=0$. Then, in the notation of Lemma~\ref{lem.extendingderivations} we have 
\[ \widehat{[\alpha,\beta]}=[\hat{\alpha},\hat{\beta}]\;.\] 
\end{lemma}

\begin{proof} One is reduced to considering homogeneous components and 
the conclusion is a straightforward consequence of the easy formulas
\[ (\hat{\alpha}\hat{\beta}T)(\eta)=(\hat{\beta}T)(\alpha^*\eta)=T(\beta^*\alpha^*\eta),\qquad 
\alpha^*\beta^*=-(-1)^{\bar{\alpha}\;\bar{\beta}}(\alpha\beta)^*\;.\]
\end{proof}

The above construction applies in particular for the derivations $\de,\debar$ and $\bi_{\xi}$, with 
$\xi \in A^{0,*}_X(\Theta_X)$;  
we get the operators
\[ \widehat{\de}\colon D^{p,q}_X\to D^{p+1,q}_X,\quad \widehat{\debar}\colon D^{p,q}_X\to D^{p,q+1}_X,\quad
\widehat{\bi_{\eta}}\colon D^{p,q}_X\to D^{p-1,q+\bar{\xi}}_X\;.\]
The same applies also to the holomorphic Lie derivative $\bl_\xi=[\partial,\bi_\xi]$, giving operators
\[ \widehat{\bl_{\xi}}\colon D^{p,q}_X\to D^{p,q+\bar{\xi}}_X\;.\]

\begin{corollary}[holomorphic Cartan homotopy formulas] 
Let $\xi,\zeta$ be elements in $A^{0,*}_X(\Theta_X)$. Then we have the following identities between linear operators on $D^{*,*}_X$:
\[ \left[\widehat{\partial},\widehat{\bi_\xi}\right]=\widehat{\bl_\xi},\qquad
\left[\widehat{\bi_\xi},\widehat{\bl_\zeta}\right]=\widehat{\bi_{[\xi,\zeta]}},
\qquad  
\left[\widehat{\bi_\xi},\widehat{\bi_\zeta}\right]=0\,,\qquad 
\left[\widehat{\overline{\de}},\widehat{\bi_\xi}\right]=\widehat{\bi_{\overline{\de}\xi}}\,.\]
\end{corollary}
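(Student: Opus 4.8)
The plan is to obtain each of the four identities by transporting to currents, via the extension $\alpha\mapsto\hat\alpha$ of Lemma~\ref{lem.extendingderivations}, the corresponding classical Cartan homotopy formula valid on differential forms, the transport being governed by Lemma~\ref{lemma.anticommutators}. Concretely, on the de Rham algebra $A^{*,*}_X$ one has the operator identities
\[
[\partial,\bi_\xi]=\bl_\xi,\qquad [\bi_\xi,\bl_\zeta]=\bi_{[\xi,\zeta]},\qquad [\bi_\xi,\bi_\zeta]=0,\qquad [\overline{\partial},\bi_\xi]=\bi_{\overline{\partial}\xi},
\]
the first being the very definition of the holomorphic Lie derivative and the remaining three being the usual holomorphic Cartan homotopy formulas (see Example~\ref{ex.holomorphiccartancalculus}); each of the operators involved is a continuous derivation of $A^{*,*}_X$.

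The first step I would take is to verify that every operator appearing on the left-hand sides annihilates the integration map, which is exactly the hypothesis needed to invoke Lemma~\ref{lem.extendingderivations} and Lemma~\ref{lemma.anticommutators}. For $\partial$ and $\overline{\partial}$ this is Stokes' theorem: on a compactly supported form of the relevant bidegree one has $\int_X\partial\eta=\int_X d\eta=0$ and $\int_X\overline{\partial}\eta=\int_X d\eta=0$, the other bidegree components of $d\eta$ vanishing for type reasons. For the contraction $\bi_\xi$ the condition $\int_X\circ\bi_\xi=0$ holds trivially, since $\bi_\xi$ lowers the holomorphic degree by one and can therefore produce a top form in $A^{n,n}_X$ only out of an element of $A^{n+1,*}_X=0$; the identical argument disposes of $\bi_{[\xi,\zeta]}$ and $\bi_{\overline{\partial}\xi}$. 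Finally $\int_X\circ\bl_\zeta=0$ follows from $\bl_\zeta=[\partial,\bi_\zeta]$ together with the two preceding facts.

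With these vanishings secured, each identity follows from a single application of Lemma~\ref{lemma.anticommutators}. For instance $[\widehat{\partial},\widehat{\bi_\xi}]=\widehat{[\partial,\bi_\xi]}=\widehat{\bl_\xi}$, and likewise $[\widehat{\bi_\xi},\widehat{\bl_\zeta}]=\widehat{[\bi_\xi,\bl_\zeta]}=\widehat{\bi_{[\xi,\zeta]}}$, $[\widehat{\bi_\xi},\widehat{\bi_\zeta}]=\widehat{[\bi_\xi,\bi_\zeta]}=\widehat{0}=0$, and $[\widehat{\overline{\partial}},\widehat{\bi_\xi}]=\widehat{[\overline{\partial},\bi_\xi]}=\widehat{\bi_{\overline{\partial}\xi}}$. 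Here I would note explicitly that the extension of Lemma~\ref{lem.extendingderivations} is a linear function of the operator being extended, so that operators equal on forms have equal hatted extensions on currents; this is what lets one replace $\widehat{[\alpha,\beta]}$ by the hat of the right-hand side of each Cartan formula. I do not expect any genuine obstacle: the whole content is the classical formulas on forms together with the compatibility of the hat with commutators already proved in Lemma~\ref{lemma.anticommutators}, and the only point demanding care is the systematic checking of the condition $\int_X\circ\alpha=0$ that makes the passage to currents legitimate.
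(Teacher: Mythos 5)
Your proposal is correct and takes essentially the same route as the paper, whose proof is the one-line observation that the identities follow from Lemma~\ref{lemma.anticommutators} together with the classical holomorphic Cartan homotopy formulas on differential forms. The only difference is that you spell out the verification of the hypothesis $\int_X\circ\alpha=0$ for each operator involved (via Stokes' theorem and bidegree reasons), which the paper leaves implicit; your checks are accurate.
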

\begin{proof}
The above identities are immediately obtained by Lemma~\ref{lemma.anticommutators} and by the usual 
holomorphic Cartan homotopy formulas, see, e.g., \cite[Lemma 4.1]{FMperiods}.
\end{proof}

From now on, for simplicity of notation, we omit the 
hat symbol in all the operators involved in Cartan's formulas and we write 
\[ \de\colon D^{p,q}_X\to D^{p+1,q}_X,\quad \debar\colon D^{p,q}_X\to D^{p,q+1}_X,\quad
\bi_{\xi}\colon D^{p,q}_X\to D^{p-1,q+\bar{\xi}}_X,\quad 
\bl_{\xi}\colon D^{p,q}_X\to D^{p,q+\bar{\xi}}_X\;.\]

The above considerations immediately give the following result. 
\begin{corollary}\label{cor.cartancalculusforcurrents}
Let $X$ be a smooth complex manifold, and let $KS_X=(A_X^{0,*}(\Theta_X),-\debar,[-,-])$ be its Kodaira-Spencer DG-Lie algebra. 
Then, for every integer $p$, the contraction map 
\[
\bi\colon KS_X\to \End^*(D_X^{*,*}[2p])
\]
is a Cartan homotopy with boundary $\bl$ equals to  the holomorphic Lie derivative on currents.
\end{corollary}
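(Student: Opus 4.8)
The plan is to verify directly the three pieces of data in Definition~\ref{def.cartanhomotopy}: that $[\bi_\xi,\bi_\zeta]=0$, that $\bi_{[\xi,\zeta]}=[\bi_\xi,d_{\End}\bi_\zeta]$, and that the resulting boundary $\bl=d_{\End}\bi+\bi\,d_{KS}$ coincides with the holomorphic Lie derivative. All three will follow by feeding the four holomorphic Cartan homotopy formulas for currents (the corollary immediately preceding) into these abstract identities, where the contraction is already understood as an operator on $D_X^{*,*}$ via the extension of derivations constructed above; the mathematical content is therefore essentially contained in that preceding corollary, and what remains is sign bookkeeping.

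First I would dispose of the shift $[2p]$. Since $2p$ is even, the shifted differential on $D_X^{*,*}[2p]$ differs from $d=\de+\debar$ only by the sign $(-1)^{2p}=1$, so that $\End^*(D_X^{*,*}[2p])=\End^*(D_X^{*,*})$ as differential graded Lie algebras and the statement reduces to the unshifted case. The differential of this target is then the graded commutator $d_{\End}(\phi)=[\de+\debar,\phi]=[\de,\phi]+[\debar,\phi]$ (bilinearity of the graded commutator, together with the fact that $\de$ and $\debar$ have the same degree), while the differential of $KS_X$ is $-\debar$.

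Next I would compute the boundary. For $\xi\in A_X^{0,*}(\Theta_X)$ one has $d_{\End}\bi_\xi=[\de,\bi_\xi]+[\debar,\bi_\xi]=\bl_\xi+\bi_{\debar\xi}$ by the first and fourth Cartan formulas, while $\bi_{d_{KS}\xi}=\bi_{-\debar\xi}=-\bi_{\debar\xi}$. Adding these, the two $\bi_{\debar\xi}$ terms cancel and one finds $\bl=d_{\End}\bi+\bi\,d_{KS}=\bl_\xi$, the holomorphic Lie derivative. It is worth stressing that the sign $-\debar$ in the Kodaira--Spencer differential is precisely what produces this cancellation.

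Finally I would check the two Cartan identities. The vanishing $[\bi_\xi,\bi_\zeta]=0$ is literally the third Cartan formula. For the second identity I would expand $d_{\End}\bi_\zeta=\bl_\zeta+\bi_{\debar\zeta}$ as above, so that $[\bi_\xi,d_{\End}\bi_\zeta]=[\bi_\xi,\bl_\zeta]+[\bi_\xi,\bi_{\debar\zeta}]=\bi_{[\xi,\zeta]}+0$ by the second and third Cartan formulas, the bracket $[\xi,\zeta]$ being the one of $KS_X$. The only obstacle is to keep the Koszul signs in the graded commutators consistent with the conventions under which the holomorphic Cartan formulas are stated; but since every step matches a formula of the preceding corollary term by term, no genuine difficulty arises.
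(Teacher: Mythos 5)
Your proof is correct and follows essentially the same route as the paper: the paper likewise reduces to the unshifted case via the tautological DG-Lie isomorphism $\End^*(D_X^{*,*})\simeq\End^*(D_X^{*,*}[2p])$ and then invokes the holomorphic Cartan homotopy formulas for currents, exactly as in its Example on the period datum $(KS_X,A_X^{*,*},F_X^p,\bi)$. The only difference is that you spell out the sign bookkeeping (the cancellation $\bi_{\debar\xi}-\bi_{\debar\xi}=0$ coming from the differential $-\debar$ of $KS_X$, and the decomposition $d_{\End}\bi_\zeta=\bl_\zeta+\bi_{\debar\zeta}$) which the paper leaves implicit by reference.
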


\begin{proof} The case $p=0$ is a direct consequence of Cartan homotopy formulas, with the same computations of 
Example~\ref{ex.holomorphiccartancalculus}. The tautological identification 
$\End^*(D_X^{*,*})\simeq \End^*(D_X^{*,*}[2p])$ is an isomorphism of differential graded Lie algebras and then, 
also the contraction map $\bi\colon KS_X\to \End^*(D_X^{*,*}[2p])$ is a Cartan homotopy.
\end{proof}

For every $p$ we have an inclusion 
of complexes 
\[ (A_X^{p,*},\debar)\to (D^{p,*}_X,\debar)\]
which, by $\debar$-Poincar\'e lemma for currents (see e.g. \cite[p. 383]{G-H}) is a quasi-isomorphism and therefore for every $p\ge 0$ the   
inclusion of  complexes 
\[(\oplus_{i\ge p} A_X^{i,*}, \de+\debar)\to  (\oplus_{i\ge p} D_X^{i,*}, \de+\debar)\]
is a quasi-isomorphism.  

\begin{lemma} In the notation above, if $X$ is compact K\"{a}hler, then for every $p\ge 0$, the 
inclusion of complexes 
\[(\oplus_{i\ge p} D_X^{i,*}, \de+\debar)\to  (\oplus_{i\ge 0} D_X^{i,*}, \de+\debar)\]
is injective in cohomology.  
\end{lemma}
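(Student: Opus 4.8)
The plan is to reduce the statement about currents to the corresponding statement about differential forms, where it becomes the classical strictness of the Hodge filtration on a compact K\"ahler manifold. In the paragraph preceding the lemma we have already established, via the $\debar$-Poincar\'e lemma for currents, that for every $p\ge 0$ the inclusion of complexes $(\oplus_{i\ge p}A_X^{i,*},\de+\debar)\to(\oplus_{i\ge p}D_X^{i,*},\de+\debar)$ is a quasi-isomorphism. First I would assemble these into the commutative square
\[
\xymatrix{
(\oplus_{i\ge p}A_X^{i,*},\de+\debar)\ar[r]\ar[d]_{\wr}&(\oplus_{i\ge 0}A_X^{i,*},\de+\debar)\ar[d]^{\wr}\\
(\oplus_{i\ge p}D_X^{i,*},\de+\debar)\ar[r]&(\oplus_{i\ge 0}D_X^{i,*},\de+\debar)
}
\]
in which the two vertical arrows are quasi-isomorphisms and the horizontal arrows are the tautological inclusions. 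Passing to cohomology, the bottom inclusion is injective in cohomology if and only if the top one is; thus it suffices to prove the statement for the de Rham complex of smooth forms.

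For forms, the horizontal inclusion is exactly the inclusion of the $p$-th stage $F^p_X=\oplus_{i\ge p}A_X^{i,*}$ of the Hodge filtration into the full de Rham complex $(A_X^{*,*},\de+\debar)$. I would then invoke the classical fact that on a compact K\"ahler manifold the differential $d=\de+\debar$ is \emph{strictly compatible} with the Hodge filtration, equivalently that the Fr\"olicher (Hodge-to-de Rham) spectral sequence degenerates at $E_1$; see e.g. \cite{G-H}. Strict compatibility says precisely that $d(F^p_X)=d(A_X^{*,*})\cap F^p_X$. Granting this, injectivity in cohomology is immediate: if $\omega\in F^p_X$ is $d$-closed and maps to $0$ in the cohomology of $(A_X^{*,*},d)$, then $\omega=d\eta$ for some global form $\eta$, whence $\omega\in d(A_X^{*,*})\cap F^p_X=d(F^p_X)$, i.e. $\omega$ is already a coboundary inside $F^p_X$ and hence represents $0$ in $H^*(F^p_X)$.

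To make the argument self-contained I would prove strict compatibility through the $\de\debar$-lemma, which holds on compact K\"ahler manifolds. The mechanism is as follows: given a $d$-closed, $d$-exact $\omega\in F^p_X$, Hodge theory (using $\Delta_d=2\Delta_{\debar}$, so that the harmonic projection and the Green operator $G_d$ preserve bidegree) writes $\omega=d\zeta$ with $\zeta:=d^\ast G_d\omega$; since $d^\ast=\de^\ast+\debar^\ast$ and $G_d\omega\in F^p_X$, the element $\zeta$ lies in $F^{p-1}_X$ and has a single homogeneous component $\zeta^{p-1,*}=\de^\ast G_d\omega^{p,*}$ outside $F^p_X$. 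The constraint $\omega\in F^p_X$ forces the $(p-1,*)$-part of $d\zeta$ to vanish, i.e. $\debar\zeta^{p-1,*}=0$, so that $\de\zeta^{p-1,*}$ is $\de$-exact and $\debar$-closed; the $\de\debar$-lemma then yields $\de\zeta^{p-1,*}=d\gamma$ with $\gamma\in F^p_X$, and replacing $\zeta$ by $\zeta-\zeta^{p-1,*}+\gamma\in F^p_X$ exhibits $\omega$ as a coboundary within $F^p_X$.

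The main obstacle is precisely this last step: establishing strict compatibility, i.e. that an element of $F^p_X$ which is globally $d$-exact is already exact within $F^p_X$. Everything else is formal — the reduction to forms is a two-out-of-three argument, and the passage from strictness to cohomological injectivity is immediate. The K\"ahler hypothesis enters only through the $\de\debar$-lemma (equivalently through the identity $\Delta_d=2\Delta_{\debar}$ and the resulting compatibility of the Hodge decomposition with bidegree), which is exactly where compactness and the existence of a K\"ahler metric are genuinely used.
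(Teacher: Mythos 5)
Your proof is correct and follows exactly the route the paper intends: the paper's own proof is the one-line "Immediate from the analogous statement for differential forms," i.e., precisely your two-out-of-three reduction through the commutative square whose vertical arrows are the quasi-isomorphisms $(\oplus_{i\ge p}A_X^{i,*},\de+\debar)\to(\oplus_{i\ge p}D_X^{i,*},\de+\debar)$ established just before the lemma, combined with the classical strictness of the Hodge filtration on a compact K\"ahler manifold. Your additional self-contained derivation of strictness via the $\de\debar$-lemma (correct, modulo the one-line computation $\de\debar\beta=d(-\de\beta)$ with $-\de\beta\in F^p_X$) goes beyond what the paper records, which simply cites the classical fact.
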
 

\begin{proof} Immediate from the analogous statement for differential forms.\end{proof}

Every closed complex submanifold $Z\subseteq X$ of  pure codimension $p$ determines a current 
$\int_{Z}\in D^{p,p}_X$:
\[ \int_{Z}\colon A_{X,c}^{*,*}\to \C,\qquad
\psi\mapsto \int_{Z}\psi\;.
\] 
Moreover, since the integral on $Z$ of every exact form in $A^{*,*}_{X,c}$ vanishes, the current $\int_Z$
is $\partial$-and-$\overline{\partial}$-closed: in other words 
$\int_Z$ is an element in $Z^{2p}(D^{*,*}_X)=Z^0(D^{*,*}_X[2p])$.

\begin{lemma}\label{for-is-staisfied-i}
Let $Z\subseteq X$ be a closed  complex submanifold of pure codimension $p$ and let $KS_{(X,Z)}$ be the DG-Lie subalgebra of $KS_X$ defined by
\[
KS_{(X,Z)}:=
A^{0,*}_X(\Theta_X(-\log Z))=\ker(A_X^{0,*}(\Theta_X)\to A^{0,*}_Z(N_{Z/X})),
\]
where $N_{Z/X}$ is the normal sheaf to $Z$ in $X$. Then we have
\[
\bi_\xi \int_Z=0\qquad 
\]
for any $\xi$ in $KS_{(X,Z)}$.
\end{lemma}

\begin{proof} 
Let $\xi$ be an element in $A^{0,q}_X(\Theta_X(-\log Z))$. Then, for any $\eta\in A^{n-p+1,n-p-q}_{X,c}$, we have
\[
\left(\bi_\xi\int_Z\right)\eta=-\int_Z \bi_\xi \eta
\]
and then it is sufficient to prove that $(\bi_\xi \eta)_{|Z}=0$.
Around a point $x\in Z$ we have a holomorphic chart $U$ and  holomorphic coordinates $\{z^1,\dots,z^n\}$  such that $Z\cap U$ is  defined by the equations $z^1=0,\dots, z^p=0$. 
By linearity it is sufficient to consider the case  where $\eta$ has the form
\[
\eta=dz^{i_1}\wedge\cdots \wedge dz^{i_{n-p+1}}\wedge \mu,\qquad i_1<\cdots<i_{n-p+1},\; \mu\in A^{0,n-p-q}_{U,c}.\]
The element $\xi$ may be written  as 
\[ \xi=\sum_{j=1}^n\frac{\de~}{\de z^j}\otimes \xi_j,\qquad \xi_j\in A^{0,q}_{U},\quad\text{and}\quad (\xi_j)_{|Z}=0 \quad \text{for}\;j\le p\; \]
and therefore
\[ \bi_{\xi}\eta=(-1)^{q(n-p+1)}\sum_{j=1}^n\left(\frac{\de~}{\de z^j}\contr dz^{i_1}\wedge\cdots \wedge dz^{i_{n-p+1}}\right)\wedge 
\xi_j\wedge\mu\;.\]
Since $dz^1_{|Z}=0,\ldots, dz^p_{|Z}=0$ we have $(\bi_\xi\eta)_{|Z}=0$ unless $i_1\le p$ and
$i_2=p+1,\ldots,i_{n-p+1}=n$, while in this particular case  we also have 
\[ (\bi_{\xi}\eta)_{|Z}=(-1)^{q(n-p+1)}
dz^{p+1}\wedge\cdots \wedge dz^{n}\wedge 
\xi_{i_1}\wedge\mu_{|Z}=0\;.\]
\end{proof}

As we already remarked, it is well known that the DG-Lie algebra $KS_X$ controls the infinitesimal deformations of $X$. Similarly, since $Z$ is smooth, the DG-Lie algebra $KS_{(X,Z)}$ controls the infinitesimal deformations of the pair $(X,Z)$, see e.g. \cite{IaconoDP} and references therein. It follows that the (homotopy class of the) DG-Lie algebra
\[
\mathfrak{Hilb}_{X/Z}:=hofib(KS_{(X,Z)}\hookrightarrow KS_X)
\]
controls the infinitesimal  embedded deformations of $Z$ inside $X$, see \cite{donarendiconti,ManettiSemireg} for a complete proof.  In other words, one has an equivalence of infinitesimal deformation functors
\[
\Hilb_{X/Z}\cong \Def_{\mathfrak{Hilb}_{X/Z}},
\]
where $\Hilb_{X/Z}$ is the infinitesimal neighborhood of the Hilbert scheme of $X$ at the submanifold $Z$.  More in detail, if one denotes by 
$i^*\colon \mathcal{A}^{0,0}_X\to \mathcal{A}^{0,0}_Z$ the restriction map between the sheaves of differentiable functions, then  
according to \cite[Thm. 5.2]{ManettiSemireg} and using the description of the functor 
$\Def_{\mathfrak{Hilb}_{X/Z}}$ from Lemma~\ref{lem.modellopiccolofibraomotopica},  to any local Artin $\C$-algebra $B$ and every Maurer-Cartan element  $\xi$ in 
\[ \MC_{\mathfrak{Hilb}_{X/Z}}(B)=\{ \xi\in A_X^{0,0}(\Theta_X)\otimes\mathfrak{m}_B\mid e^\xi\ast 0\in 
A_X^{0,1}(\Theta_X(-\log Z))\otimes\mathfrak{m}_B\}\]
it is associated the subscheme
$Z_B\subseteq X\times Spec(B)$ with ideal sheaf $\mathcal{O}_X\otimes B\cap e^{-\xi}(\ker i^*\otimes B)$.
Geometrically, $Z_B$ is the image of $Z\times Spec(B)$ under the formal diffeomorphism of $X\times Spec(B)$ obtained
by integration of the vector field $-\xi$.

\begin{definition}
Let $(X,Z)$ be a pair consisting of a complex manifold  $X$ and of a closed complex submanifold $Z\subset X$ of pure codimension $p$. The formal Abel-Jacobi datum of the pair $(X,Z)$ is defined as
\[ \left(KS_X,\,KS_{(X,Z)},\, D^{*,*}_X[2p],\, D^{\ge p,*}_X[2p],\,\int_Z\,,\,\bi\right)\,. \]
We will denote by $\mathcal{AJ}_{(X,Z)}$ the associated Abel-Jacobi maps, both at the level of the homotopy category of DG-Lie algebras,
\[
\mathcal{AJ}_{(X,Z)}\colon \mathfrak{Hilb}_{X/Z}\to \mathfrak{J}(D^{*,*}_X[2p], D^{\ge p,*}_X[2p])
\]
and at the level of infinitesimal deformation functors
\[
\mathcal{AJ}_{(X,Z)}\colon \Hilb_{X/Z}\to Jac^{2p}_X.
\]
where we have written $Jac^{2p}_X$ for $Jac(D^{*,*}_X[2p], D^{\ge p,*}_X[2p])$.
\end{definition}

By Lemma \ref{lemma.J-easy}, we know that the Jacobian DG-Lie algebra $\mathfrak{J}(D^{*,*}_X[2p], D^{\ge p,*}_X[2p])$ which we have seen is homotopy abelian and quasi-isomorphic to the quotient complex 
\[\frac{D^{*,*}_X[2p-2]}{D^{\ge p,*}_X[2p-2]}=
D^{<p,*}_X[2p-2]\,.\]
Therefore the Abel-Jacobi map $\mathcal{AJ}_{(X,J)}$ can equivalently be seen as a morphism
\[
\mathcal{AJ}_{(X,Z)}\colon \mathfrak{Hilb}_{X/Z}\to D^{<p,*}_X[2p-2]
\]
in the homotopy category of DG-Lie algebras. Since $D^{p,*}_X$ is a fine resolution of the sheaf $\Omega_X^p$, the functor 
$Jac^{2p}_X$ is pro-represented by the germ at $0$ of the hypercohomology group
\[ H^1(D^{<p,*}_X[2p-2])=\mathbb{H}^{2p-1}(X;\mathcal{O}_X\to\Omega^1_X\to\cdots \to\Omega^{p-1}_X)\,.\]

\begin{proposition}
The differential of the Abel-Jacobi map \[
\mathcal{AJ}_{(X,Z)}\colon \Hilb_{X/Z}\to Jac^{2p}_X
\]
is the linear map  
\begin{align*}
AJ_1\colon H^{0}(Z;N_{X/Z})&\to H^{2p-1}(D^{<p,*}_X)\cong \mathbb{H}^{2p-1}(X;\mathcal{O}_X\to\Omega^1_X\to\cdots \Omega^{p-1}_X)\\
[x]&\mapsto -\left[\bi_{\tilde{x}}\int_Z\mod D^{\ge p,*}_X[2p]\right],
\end{align*}
while  the obstruction map is
\begin{align*}
AJ_2\colon H^{1}(Z;N_{X/Z})&\to H^{2p}(D^{<p,*}_X)\cong \mathbb{H}^{2p}(X;\mathcal{O}_X\to\Omega^1_X\to\cdots \Omega^{p-1}_X)\\
[x]&\mapsto -\left[\bi_{\tilde{x}}\int_Z\mod D^{\ge p,*}_X[2p]\right],
\end{align*}
where $\tilde{x}$ is any representative of $[x]$ in $A^{0,*}_X(\Theta_X)$. 
In particular, since the functor $Jac^{2p}_X$ is unobstructed,  every obstruction 
to infinitesimal deformations of $Z$ in $X$ is contained in the kernel of $AJ_2$. 
\end{proposition}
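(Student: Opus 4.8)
The plan is to obtain this statement as a direct specialization of Proposition~\ref{prop.tangent-obstruction-aj} to the formal Abel-Jacobi datum $(KS_X, KS_{(X,Z)}, D^{*,*}_X[2p], D^{\ge p,*}_X[2p], \int_Z, \bi)$, the only genuine work being the identification of the source and target cohomology groups in the relevant degrees. First I would verify that this tuple really is a formal Abel-Jacobi datum. That $\bi$ is a Cartan homotopy on currents whose boundary is the holomorphic Lie derivative is Corollary~\ref{cor.cartancalculusforcurrents}; that $\bl_\xi$ preserves $D^{\ge p,*}_X[2p]$ is immediate from the bidegree count $\bl_{\xi}\colon D^{r,s}_X\to D^{r,s+\bar{\xi}}_X$, which leaves the holomorphic index $r$ unchanged; that $\int_Z$ is a degree-zero cocycle of $F=D^{\ge p,*}_X[2p]$ was observed above, since $\int_Z\in Z^{2p}(D^{*,*}_X)\cap D^{p,p}_X$; and $\bi_{\xi}\int_Z=0$ for $\xi\in KS_{(X,Z)}$ is exactly Lemma~\ref{for-is-staisfied-i}. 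Thus Proposition~\ref{prop.tangent-obstruction-aj} applies and supplies both the morphism $\mathcal{AJ}_{(X,Z)}$ and its description in cohomology.

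Next I would pin down the source. By Lemma~\ref{lemma.tw-quotient} one has $H^*(\mathfrak{Hilb}_{X/Z})\cong H^{*-1}(KS_X/KS_{(X,Z)})$, and the short exact sequence of Dolbeault complexes
\[ 0\to A^{0,*}_X(\Theta_X(-\log Z))\to A^{0,*}_X(\Theta_X)\to A^{0,*}_Z(N_{Z/X})\to 0 \]
identifies $KS_X/KS_{(X,Z)}$ with the Dolbeault complex computing $H^*(Z;N_{Z/X})$. Hence the tangent space $H^1(\mathfrak{Hilb}_{X/Z})$ is $H^0(Z;N_{Z/X})$ and the obstruction space $H^2(\mathfrak{Hilb}_{X/Z})$ is $H^1(Z;N_{Z/X})$, recovering the domains of $AJ_1$ and $AJ_2$.

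For the target I would invoke Lemma~\ref{lemma.J-easy}, which gives $\mathfrak{J}(V,F)\simeq (V/F)[-2]=D^{<p,*}_X[2p-2]$, so that $H^{*-2}(V/F)=H^{*+2p-2}(D^{<p,*}_X)$; taking $*=1$ and $*=2$ produces exactly $H^{2p-1}(D^{<p,*}_X)$ and $H^{2p}(D^{<p,*}_X)$. Since each $D^{j,*}_X$ is a fine resolution of $\Omega^j_X$ by the $\debar$-Poincaré lemma for currents, the complex $D^{<p,*}_X$ computes $\mathbb{H}^{*}(X;\mathcal{O}_X\to\Omega^1_X\to\cdots\to\Omega^{p-1}_X)$, giving the hypercohomological descriptions in the statement. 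Feeding these identifications into the cohomology formula $[x]\mapsto -[\bi_{\tilde{x}}\int_Z \mod F]$ of Proposition~\ref{prop.tangent-obstruction-aj}, specialized to degrees $*=1$ and $*=2$, yields precisely $AJ_1$ and $AJ_2$ with the stated sign.

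Finally, for the unobstructedness conclusion I would argue abstractly: $\mathfrak{J}(V,F)$ is homotopy abelian by Lemma~\ref{lemma.J-easy}, so $Jac^{2p}_X$ is a smooth, hence unobstructed, functor. Because $\mathcal{AJ}_{(X,Z)}$ is a morphism in the homotopy category of DG-Lie algebras, the induced obstruction map $AJ_2=H^2(\mathcal{AJ}_{(X,Z)})$ carries every obstruction of $\Hilb_{X/Z}$ to an obstruction of $Jac^{2p}_X$, all of which vanish; therefore every obstruction of $\Hilb_{X/Z}$ lies in $\ker AJ_2$, which is the abstract form of the ``semiregularity kills obstructions'' principle. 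The main point requiring care is the degree bookkeeping, combining the shift by $[2p]$ on the DG-pair with the shift by $[-2]$ coming from the double homotopy fiber, and matching the Dolbeault description of the source cohomology against the hypercohomological description of the target to the degrees emerging from Proposition~\ref{prop.tangent-obstruction-aj}; the geometric content is otherwise entirely carried by the cited results.
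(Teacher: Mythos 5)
Your proposal is correct and follows essentially the same route as the paper: the paper's own proof simply observes that the differential and obstruction maps are $H^i(\mathcal{AJ}_{(X,Z)})$ for $i=1,2$ and concludes by Proposition~\ref{prop.tangent-obstruction-aj}, exactly as you do. The additional detail you supply (verification of the Abel-Jacobi datum via Corollary~\ref{cor.cartancalculusforcurrents} and Lemma~\ref{for-is-staisfied-i}, the identification $H^*(\mathfrak{Hilb}_{X/Z})\cong H^{*-1}(A^{0,*}_Z(N_{Z/X}))$, and the degree bookkeeping for the target via Lemma~\ref{lemma.J-easy}) is precisely what the paper leaves implicit from the surrounding discussion, so your write-up is a legitimate, fuller rendering of the same argument.
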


\begin{proof}
The differential and the obstruction map for $\mathcal{AJ}_{(X,Z)}$ are given by
\[
H^i(\mathcal{AJ}_{(X,Z)})\colon H^i(\mathfrak{Hilb}_{X/Z}) \to H^i(\mathfrak{J}(D^{*,*}_X[2p], D^{\ge p,*}_X[2p]))
\]
for $i=1,2$, respectively. One concludes by Proposition~\ref{prop.tangent-obstruction-aj}.
\end{proof}

By the explicit description given above,  the image of the tangent map  is contained in the image of the natural map 
\begin{equation}\label{equ.inclusion1}
H^p(\Omega^{p-1}_X)=\mathbb{H}^{2p-1}(X;0\to \cdots 0\to\Omega^{p-1}_X)\to 
\mathbb{H}^{2p-1}(X;\mathcal{O}_X\to\Omega^1_X\to\cdots \Omega^{p-1}_X)\end{equation}
and the image of the obstruction map is contained in the image of  
\begin{equation}\label{equ.inclusion2}
H^{p+1}(\Omega^{p-1}_X)=\mathbb{H}^{2p}(X;0\to \cdots 0\to\Omega^{p-1}_X)\to 
\mathbb{H}^{2p}(X;\mathcal{O}_X\to\Omega^1_X\to\cdots \Omega^{p-1}_X)\,.
\end{equation}

When the Hodge to de Rham spectral sequence degenerate at $E_1$, e.g. if $X$ is compact K\"{a}hler, then 
\eqref{equ.inclusion1} and   \eqref{equ.inclusion2} are injective maps and then we recover the differential $AJ_1$ as the usual infinitesimal Abel-Jacobi map, see either \cite[pag. 28]{Green}\footnote{Notice the misprint in the last line.} or \cite[Lemme 12.6]{Voisin} 
\[ H^{0}(Z;N_{X/Z})\to H^p(X,\Omega_X^{p-1}),\]
and the obstruction map as the usual semiregularity map \cite{bloch,semireg2011,ManettiSemireg}.
\[ H^{1}(Z;N_{X/Z})\to H^{p+1}(X,\Omega_X^{p-1})\,.\]

\appendix

\bigskip
\section{Direct $L_{\infty}$ maps}
\label{app.iterati}
Using an explicit $L_\infty$ quasi-isomorphism 
$TW^{[2]}(\mathcal{J}(V,F)_{\bullet\bullet}) \to V/F[-2]$ defined by means of iterated integrals, one can exhibit an explict morphism of Maurer-Cartan functors
\[
\mathcal{AJ}_{\MC}\colon \MC_{\tilde{\mathfrak{g}}\hookrightarrow \mathfrak{g}}\to \MC_{V/F[-2]}
\]
lifting the formal Abel-Jacobi map $\mathcal{AJ}\colon\Def_{\tilde{\mathfrak{g}}\hookrightarrow \mathfrak{g}}\to Jac(V;F)$.

\medskip

For a field $\K$ of characteristic $0$, we define the  linear maps 
$\int_n\colon \K[t]^{\otimes n}\to \K$, $n>0$,  by the formulas
\[\int_np_1\otimes\cdots\otimes p_n=\int_{0\le t_1\le \cdots\le t_n\le 1}
p_1(t_1)\cdots p_n(t_n)dt_1\cdots dt_n\;.\]
Equivalently, $\int_n$ is the unique $\K$-linear map such that 
\[ \int_nt^{a_1}\otimes\cdots\otimes t^{a_n}=\frac{1}{\;\prod_{h=1}^n(1+\sum_{i=1}^h a_i)\;}\,,
\qquad a_1,\ldots,a_n\ge 0\,.\]

\begin{lemma}
For  $n\ge 1$ and  $p,q_1,\ldots,q_n\in \K[t]$ such that $p(0)=0$ we have: 
\begin{enumerate}

\item 
\[\int_{n+1}p'\otimes q_1\otimes\cdots\otimes q_n=
\int_{n}pq_1\otimes q_2\otimes \cdots\otimes q_n\;,\]
where $p'$ is the usual derivative of $p$ in $\K[t]$;

\item if $0<i<n$,  then 
\[\int_{n+1}\cdots\otimes q_i\otimes p'\otimes q_{i+1}\otimes \cdots=
-\int_{n}\cdots q_ip\otimes q_{i+1}\cdots\; +\int_{n}\cdots q_i\otimes pq_{i+1}\cdots\;;\]

\item if $p(1)=0$, then 
\[\int_{n+1}q_1\otimes\cdots\otimes q_n\otimes p'=-\int_{n}q_1\otimes\cdots\otimes q_np\;.\]

\end{enumerate}
\end{lemma}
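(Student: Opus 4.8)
The plan is to prove all three identities by the same device: use the geometric definition of $\int_n$ as integration over the ordered simplex $\{0\le t_1\le\cdots\le t_n\le 1\}$, and integrate out the single variable on which the derivative $p'$ acts, applying the fundamental theorem of calculus. The three statements differ only in the position of the $p'$ slot, and hence in the limits of that innermost integration. First I would fix the notation, writing
\[ \int_{n+1}r_1\otimes\cdots\otimes r_{n+1}=\int_{0\le t_1\le\cdots\le t_{n+1}\le 1}r_1(t_1)\cdots r_{n+1}(t_{n+1})\,dt_1\cdots dt_{n+1}, \]
and noting that Fubini allows me to carry out the integrations in any order.

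For part (1), the factor $p'$ occupies the first slot, so its variable $t_1$ ranges over $[0,t_2]$; the inner integral $\int_0^{t_2}p'(t_1)\,dt_1=p(t_2)-p(0)=p(t_2)$ uses the hypothesis $p(0)=0$, and absorbing $p(t_2)$ into $q_1(t_2)$ collapses the remaining $n$-fold integral to $\int_n (pq_1)\otimes q_2\otimes\cdots\otimes q_n$. For part (3), $p'$ occupies the last slot, its variable ranging over $[t_n,1]$, and here $\int_{t_n}^1 p'=p(1)-p(t_n)=-p(t_n)$ uses $p(1)=0$; this produces both the overall minus sign and the product $q_np$, giving $-\int_n q_1\otimes\cdots\otimes(q_np)$.

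For part (2), with $p'$ in an interior slot having $q_i$ as left neighbour and $q_{i+1}$ as right neighbour, the corresponding variable ranges between its two neighbours, and $\int p'=p(\text{upper limit})-p(\text{lower limit})$ splits the integral into two pieces: the $+p(\text{upper})$ piece merges $p$ into $q_{i+1}$, while the $-p(\text{lower})$ piece merges $p$ into $q_i$. This is exactly the stated alternating-sign formula, and one sees that in this case no boundary condition on $p$ is needed, precisely because both endpoints of the innermost integration are interior simplex variables rather than $0$ or $1$.

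I expect no genuine obstacle: in each case the argument is a one-line application of the fundamental theorem of calculus together with Fubini. The only point requiring care is the index bookkeeping — correctly identifying the neighbours of the $p'$ slot and the limits of the innermost integral — which is exactly what distinguishes the three cases and accounts for the appearance or absence of the hypotheses $p(0)=0$ and $p(1)=0$.
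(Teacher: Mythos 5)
Your proof is correct, and it takes a somewhat different route from the paper's. You work directly with the simplex integral: since $p'$ is the only factor depending on its own integration variable, Fubini lets you integrate that variable first, between its neighbouring simplex variables (or between $0$ and the right neighbour in case (1), between the left neighbour and $1$ in case (3)), and the fundamental theorem of calculus produces boundary evaluations of $p$ that merge multiplicatively into the adjacent factors --- the signs and the role of the hypotheses $p(0)=0$, $p(1)=0$ come out exactly as you state. The paper instead introduces the truncated iterated integrals
\[
\Phi_n(p_1,\ldots,p_n)(t)=\int_{0\le t_1\le\cdots\le t_n\le t}p_1(t_1)\cdots p_n(t_n)\,dt_1\cdots dt_n,
\]
together with the recursion $\Phi_n(p_1,\ldots,p_n)(t)=\int_0^t\Phi_{n-1}(p_1,\ldots,p_{n-1})(s)\,p_n(s)\,ds$ and the composition identity $\Phi_n(p_1,\ldots,p_n)=\Phi_i\bigl(\Phi_{n-i}(p_1,\ldots,p_{n-i})p_{n-i+1},\ldots,p_n\bigr)$, and then applies integration by parts at the slot where $p'$ sits. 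The two arguments are close relatives --- integration by parts is the fundamental theorem of calculus combined with the Leibniz rule --- but yours avoids both the recursion and the product rule, handling each case by a single one-variable integration, and it makes transparent why case (2) needs no boundary hypothesis on $p$ while cases (1) and (3) do; the paper's formulation, on the other hand, sets up the recursive calculus of the maps $\Phi_n$, which is the standard language for iterated integrals. One point you should make explicit: $\K$ is an abstract field of characteristic $0$, so Fubini and the fundamental theorem of calculus must be read as identities of formal, term-by-term integration of polynomials; this is harmless, since all the identities involved are universal ones with rational coefficients and can be checked on monomials (the paper's integration by parts requires the same reading).
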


\begin{proof} The proof is completely straightforward. Probably the simplest way is by introducing 
the multilinear maps $\Phi_n\colon \K[t]\times\cdots\times \K[t]\to \K[t]$:
\[\Phi_{n}(p_1,\ldots,p_n)(t)=\int_{0\le t_1\le \cdots\le t_n\le t}
p_1(t_1)\cdots p_n(t_n)dt_1\cdots dt_n\;,\]
and using integration by parts together with the obvious identities:
\[ \int_{n}p_1\otimes\cdots\otimes p_n=\Phi_{n}(p_1,\ldots,p_n)(1),\qquad 
\Phi_1(p)(t)=\int_0^tp(s)ds,\]
\[\Phi_{n}(p_1,\ldots,p_n)(t)=\int_0^t\Phi_{n-1}(p_1,\ldots,p_{n-1})(s)p_n(s)ds\;,\]
\[\Phi_{n}(p_1,\ldots,p_n)=\Phi_{i}(\Phi_{n-i}(p_{1},\ldots,p_{n-i})p_{n-i+1},\ldots, p_n)\;.\]
\end{proof}

Given a differential graded associative algebra $A$ and a left differential graded $A$-module $M$, we functorially associate a new differential graded associative algebra $B(A,M)$ in the following way: first we consider  
the direct sum of complexes $A\oplus M$ as a differential graded associative algebra equipped with the product  
\[ (a,m)(b,n)=(ab,an),\qquad a,b\in A,\; n,m\in M\;.\]
Notice that $MA=0$, the natural inclusion $A\to A\oplus M$ is a morphism of differential graded associative algebras, and its 
Thom-Whitney homotopy fiber is naturally isomorphic to the algebra
\[ B(A,M)=\{(a(t),m(t))\in A[t,dt]\oplus M[t,dt]\mid a(0)=0,\; m(0)=m(1)=0\}\,.\]

As a graded vector space we have
\[ B(A,M)=R\oplus S=R_0\oplus R_1\oplus S_0\oplus S_1,\]
where $R=R_0\oplus R_1$, $S=S_0\oplus S_1$, 
\[ R_0=\{p\in A[t]\mid p(0)=0\},\qquad R_1=A[t]dt\]
\[ S_0=\{p\in M[t]\mid p(0)=p(1)=0\},\qquad S_1=M[t]dt\;.\]

In the next theorem we shall denote by $\overline{T}(V)$ the reduced tensor coalgebra generated by 
the graded vector space $V$; the bar construction gives a natural DG-coalgebra structure on 
$\overline{T}(B(A,M)[1])$.

\begin{theorem}\label{thm.iterated} 
Consider $M[-1]$ as a differential associative algebra with trivial product, then 
the linear map $\int_{\infty}\colon \overline{T}(B(A,M)[1])\to M$ of degree 0, defined 
by setting
\[ \int_{\infty}\colon R_1^{\otimes n}\otimes S_1\to M,\quad n\ge 0,\]
\[ \int_{\infty}a_1q_1(t)dt\otimes\cdots\otimes a_nq_n(t)dt\otimes  mp(t)dt
=a_1\cdots a_n m \int_{n+1}q_1\otimes\cdots\otimes q_n\otimes p\;,\]
and extending by $0$ to the other direct summands of $\overline{T}(B(A,M)[1])$, gives an $A_{\infty}$ quasi-isomorphism 
$\int_{\infty}\colon B(A,M)\to M[-1]$.
\end{theorem}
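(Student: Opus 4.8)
The plan is to recognize $\int_\infty$ as the corestriction of a morphism of differential graded coalgebras $\overline{T}(B(A,M)[1])\to \overline{T}(M[-1][1])$ and to verify directly that it is a chain map whose linear part is a quasi-isomorphism. Since the product on $M[-1]$ is trivial, the codifferential on the target bar coalgebra $\overline{T}(M[-1][1])=\overline{T}(M)$ is induced solely by the internal differential of $M$. Hence, by cofreeness of the target, the assertion that $\int_\infty$ underlies an $A_\infty$-morphism is equivalent to the single structure equation
\[
\int_\infty \circ D \;=\; d_M\circ \int_\infty \colon \overline{T}(B(A,M)[1])\to M,
\]
where $D=D^{(1)}+D^{(2)}$ is the bar codifferential of $B(A,M)$, with $D^{(1)}$ induced by the differential of $B(A,M)$ and $D^{(2)}$ by its associative product. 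First I would make the differential and the product fully explicit on the four summands $R_0,R_1,S_0,S_1$: the differential is the sum of the internal differentials $d_A,d_M$ (which preserve the $dt$-degree) and the de Rham differential $\partial_t\,dt$ (which sends $R_0\to R_1$ and $S_0\to S_1$), while the product is $(b,m)(b',m')=(bb',bm')$, so that $R_1\cdot R_1=R_1\cdot S_1=0$ and $S\cdot B(A,M)=0$.

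Next I would observe that $\int_\infty$ is supported on $R_1^{\otimes n}\otimes S_1$, and that consequently both sides of the structure equation vanish on any homogeneous tensor except those of one of four shapes: (A) $R_1^{\otimes n}\otimes S_1$; (B) one interior factor in $R_0$, all others in $R_1$, last factor in $S_1$; (C) $R_1^{\otimes n}\otimes S_0$; (D) first factor in $R_0$, remaining factors in $R_1$, last in $S_1$. Indeed, any tensor with two or more factors outside $R_1$ cannot be carried into the support of $\int_\infty$ by a single application of $D$ (the de Rham differential rectifies one degree-$0$ factor, and a product merge does not lower the count of non-$dt$ factors below what is required), and any tensor with an $S$-factor not in terminal position is annihilated because $S\cdot B(A,M)=0$ and $R_1\cdot S_1=0$.

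Then I would check the equation case by case. On shape (A) the product part $D^{(2)}$ vanishes identically and the de Rham part of $D^{(1)}$ vanishes as well (every factor already carries $dt$), so the equation reduces to the Leibniz rule for the differential of $a_1\cdots a_n m\in M$, matching $\int_\infty\circ D^{(1)}$ with $d_M\circ\int_\infty$ term by term. On shapes (B), (C), (D) the right-hand side $d_M\circ\int_\infty$ vanishes, since the tensor is not in the support of $\int_\infty$; here the de Rham differential converts the unique degree-$0$ factor into a $dt$-factor, producing a derivative $p'$ inside the iterated integral, while $D^{(2)}$ merges that factor with one or both of its neighbours, and I would show that these contributions cancel by means of, respectively, the interior identity, the right-hand identity (which uses $p(1)=0$, valid since the $S_0$-constraint is $p(0)=p(1)=0$), and the left-hand identity (which uses $p(0)=0$, valid for $R_0$) of the preceding lemma on iterated integrals. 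In particular the two terms produced by the interior integration-by-parts identity match precisely the left-merge ($R_1\cdot R_0$) and right-merge ($R_0\cdot R_1$) contributions of $D^{(2)}$.

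Finally, for the quasi-isomorphism statement it suffices to treat the linear part $f_1=\int_\infty|_{B(A,M)[1]}$, which by the defining formula is $(b,m_0+m_1\,dt)\mapsto \int_0^1 m_1(t)\,dt$ and vanishes on $R_0,R_1,S_0$. This is exactly the map $(\omega_0,\omega_1)\mapsto \int_{\Delta^1}\omega_1\bmod A$ of Lemma~\ref{lemma.tw-quotient} applied to the inclusion $A\hookrightarrow A\oplus M$, hence a quasi-isomorphism of complexes; therefore $\int_\infty$ is an $A_\infty$-quasi-isomorphism. The main obstacle is the bookkeeping of the Koszul, suspension, and de Rham signs: the whole argument hinges on the signs $-$, $+$, $-$ appearing in the three integration-by-parts identities agreeing with the signs of the bar codifferential $D^{(2)}$ and of the degree shift, and I expect this sign matching, rather than any conceptual step, to require the most care.
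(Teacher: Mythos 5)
Your proposal is correct and follows essentially the same route as the paper's own proof: the reduction (via cofreeness of the target bar coalgebra) to the single equation $d_M\circ\int_\infty=\int_\infty\circ D$, the identical four-case decomposition according to which tensor factor lies outside $R_1$, cancellation via the three integration-by-parts identities of the preceding lemma, and Lemma~\ref{lemma.tw-quotient} for the quasi-isomorphism of the linear part. The only differences are presentational: you make explicit the cofreeness/corestriction argument that the paper leaves implicit, while the paper carries out the explicit sign computations (in two of the three degenerate cases) that you defer as bookkeeping.
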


\begin{proof} Let's denote by the same letter $d$ the differentials of $A$, $R$, $M$ and $S$. 
We shall denote by $Q=Q_1+Q_2$ the differential on the reduced
tensor coalgebra 
\[\overline{T}(B(A,M)[1])=\bigoplus_{n\ge 1}B(A,M)[1]^{\otimes n},\] 
where $Q_1$ is the coderivation induced by $q_1=d\colon B(A,M)[1]\to B(A,M)[1]$ and $Q_2$ is the coderivation 
induced by the map 
\[q_2\colon B(A,M)[1]\otimes B(A,M)[1]\to B(A,M)[1],\qquad q_2(b,c)=(-1)^{\bar{b}}bc\,.\]
Notice that for $x\in A\oplus M$ the degree of $xp(t)dt$ in $B(A,M)[1]$ is equal to the degree of $x$ in $A\oplus M$.

We need to prove that $d\int_{\infty}=\int_{\infty}Q$. Notice that 
\[Q_1(R^{\otimes i_1}\otimes S^{\otimes j_1}\otimes \cdots \otimes
R^{\otimes i_h}\otimes S^{\otimes j_h})\subset 
R^{\otimes i_1}\otimes S^{\otimes j_1}\otimes \cdots\otimes 
R^{\otimes i_h}\otimes S^{\otimes j_h},\]
for every choice of indices and 
$Q_2(R^{\otimes i_1}\otimes S^{\otimes j_1}\otimes \cdots \otimes
R^{\otimes i_h}\otimes S^{\otimes j_h})$ is contained in the direct sum of $h$ 
similar components, each one with the indices $i_s$ lowered by $1$. Moreover, since $q_2(S\otimes R)=0$, 
the only non trivial cases to consider are:
\[ Q_1\colon R^{\otimes n}\otimes S\to R^{\otimes n}\otimes S,\qquad
Q_2\colon R^{\otimes n}\otimes S\to R^{\otimes n-1}\otimes S.\]
In order to prove that, for an element 
\[ x\in R_{i_1}\otimes\cdots\otimes R_{i_n}\otimes S_{i_{n+1}},\qquad i_j=0,1\;,\]
we have $d\int_{\infty}x=\int_{\infty}Q(x)$, we first point out that, if there are at least two indices $i_j$ equal to $0$, then $\int_{\infty}x=\int_{\infty}Q(x)=0$. Therefore we have to consider only the following four cases: 
\begin{enumerate}

\item $x\in R_1^{\otimes n}\otimes S_1$,

\item $x\in R_1^{\otimes n}\otimes S_0$,

\item $x\in R_0\otimes R_1^{\otimes n-1}\otimes S_0$,

\item $x\in R_1^{\otimes h}\otimes R_0\otimes R_1^{\otimes n-h-1}\otimes  S_1$, for $h=1,\ldots, n-1$.
\end{enumerate}

In the first case $Q(x)=Q_1(x)$ and the equality $d\int_{\infty}x=\int_{\infty}Q_1(x)$ follows immediately by Leibniz rule. 
In the second case, by linearity we may assume 
\[ x=a_1q_1(t)dt\otimes\cdots\otimes a_nq_n(t)dt\otimes mp(t),\qquad p(0)=p(1)=0\;.\]
In this case we have $\int_{\infty}x=0$,  
\[ \begin{split}\int_{\infty}Q_1(x)&=(-1)^{\overline{a_1\cdots a_nm}-1}
\int_{\infty}a_1q_1(t)dt\otimes\cdots\otimes a_nq_n(t)dt\otimes mp'(t)dt\\
&=(-1)^{\overline{a_1\cdots a_nm}-1}a_1\cdots a_nm\,\int_{n+1}(q_1,\ldots,q_n,p')\end{split}
\]
\[ \begin{split}\int_{\infty}Q_2(x)&=(-1)^{\overline{a_1\cdots a_n}}
\int_{\infty}a_1q_1(t)dt\otimes\cdots\otimes a_nq_n(t)dt\, mp(t)\\
&=(-1)^{\overline{a_1\cdots a_nm}-1}
\int_{\infty}a_1q_1(t)dt\otimes\cdots\otimes a_nmq_n(t)p(t)dt\\
&=(-1)^{\overline{a_1\cdots a_nm}-1}a_1\cdots a_nm\,\int_{n}(q_1,\ldots,q_np)\end{split}
\]
and therefore $\int_{\infty}Q(x)=0$. In the third case, by linearity we may assume 
\[ x=a_1q_1(t)\otimes\cdots\otimes a_nq_n(t)dt\otimes mp(t)dt,\qquad q_1(0)=0\;.\]
As above $\int_{\infty}x=0$, 
\[ \begin{split}\int_{\infty}Q_1(x)&=(-1)^{\overline{a_1}-1}
\int_{\infty}a_1q_1'(t)dt\otimes\cdots\otimes a_nq_n(t)dt\otimes mp(t)dt\\
&=(-1)^{\overline{a_1}-1}a_1\cdots a_nm\,\int_{n+1}(q_1',\ldots,q_n,p),\end{split}
\]
\[ \begin{split}\int_{\infty}Q_2(x)&=(-1)^{\overline{a_1}}
\int_{\infty}a_1q_1(t)a_2q_2(t)dt\otimes\cdots\otimes a_nq_n(t)dt\otimes mp(t)dt\\
&=(-1)^{\overline{a_1}}
\int_{\infty}a_1a_2q_1q_2(t)dt\otimes\cdots\otimes a_nq_n(t)dt\otimes mp(t)dt\\
&=(-1)^{\overline{a_1}}a_1\cdots a_nm\,\int_{n}(q_1q_2,\ldots,q_n,p),\end{split}
\]
and then $\int_{\infty}Q(x)=0$. The proof of the last case is completely similar and left to the reader.
The linear part of $\int_{\infty}$ is a quasi-isomorphism of complexes by Lemma~\ref{lemma.tw-quotient}.
\end{proof}

It is obvious that the $A_\infty$ morphism of Theorem~\ref{thm.iterated} is functorial in the following sense: 
given a morphism of differential graded associative algebras $A_1\to A_2$, 
a left $A_1$-module $M_1$, a left $A_2$-module $M_2$ and a morphism of $A_1$-modules 
$M_1\to M_2$ we get a commutative diagram of $A_{\infty}$ algebras
\[ \xymatrix{B(A_1,M_1)\ar[r]\ar[d]^{\int_\infty}&B(A_2,M_2)\ar[d]^{\int_\infty}\\
M_1[-1]\ar[r]&M_2[-1].}\]

Assume for simplicity that $A_1\subseteq A_2$ and $M_1\subseteq M_2$; according to Definition~\ref{def.twdoublefiber},  the double Thom-Whitney homotopy fiber of the commutative square 
\[ \xymatrix{A_1\ar[r]\ar[d]&A_1\oplus M_1\ar[d]\\
A_2\ar[r]&A_2\oplus M_2}\]
is  the differential graded associative subalgebra 
\[ TW^{[2]}(A_2/A_1,M_2/M_1)\subseteq  (A_2\oplus M_2)[s,t,ds,dt]\]
of pairs $(a(s,t),m(s,t))$ such that 
\[ a(0,t)=a(s,0)=m(0,t)=m(s,0)=0,\]
\[m(s,1)=0,\quad a(1,t)\in A_1[t,dt],\quad m(1,t)\in M_1[t,dt]\,.\] 
Equivalently, $TW^{[2]}(A_2/A_1,M_2/M_1)=B(C,N)$, where
\[ C=\{a(s)\in A_2[s,ds]\mid a(0)=0,\; a(1)\in A_1\},\quad N=\{m(s)\in M_2[s,ds]\mid m(0)=0,\; m(1)\in M_1\}\,.\]
Therefore the morphism $B(C,N)\to N[-1]$ of Theorem~\ref{thm.iterated} gives an $A_\infty$ quasi-isomorphism.
\[ TW^{[2]}(A_2/A_1,M_2/M_1)\xrightarrow{\;\int_{\infty}\;}\{ m(s)\in M_2[s,ds][-1]\mid m(0)=0,\, m(1)\in M_1\}.\]
The composition of $\int_{\infty}$ with the usual integration 
\[ \int_0^1\colon M_2[s,ds][-1]\to \frac{M_2}{M_1}[-2]\]
gives an $A_{\infty}$ quasi-isomorphism 
\[TW^{[2]}(A_2/A_1,M_2/M_1)\to \frac{M_2}{M_1}[-2]\,.\]

When $(V,F)$ is  a DG-pair, $A_1=\End^*(V;F)$, $A_2=\End^*(V)$, $M_1=F$ and $M_2=V$, the above computation gives immediately the following theorem.

\begin{theorem}\label{thm.iterartiperjacobi} 
Let $(V,F)$ be a DG-pair and 
consider the complex $V[s,ds][-1]$ as a differential associative algebra with trivial product. Then we have 
an $A_{\infty}$ quasi-isomorphism
\[TW^{[2]}(\End^*(V)/\End^*(V;F),V/F)\to \frac{V}{F}[-2]\,,\]
defined explicitly by setting
\[ (\End^*(V)[s,ds,t]dt)^{\otimes n}\otimes V[s,ds,t]dt\to V/F[-1],\qquad n\ge 0,\]
\begin{multline*}
 a_1p_1(s,t)dt\otimes \cdots\otimes a_np_n(s,t)dt\otimes  vp(s,t)dt\mapsto\\
\mapsto a_1a_2\cdots a_nv\int_{0\le s\le 1}\int_{0\le t_1\le \cdots\le t_n\le 1}
p_1(s,t_1)\cdots p_n(s,t_n)dt_1\cdots dt_n\,,\end{multline*} 
\[ a_i\in \End(V),\qquad v\in V,\qquad p_i(s,t)\in\K[s,ds,t],\]
and extending by $0$ to the other direct summands of 
$\overline{T}(TW^{[2]}(\End^*(V)/\End^*(V;F),V/F))$.
\end{theorem}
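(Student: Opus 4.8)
The plan is to obtain this statement as the direct specialization of the general bar-construction result proved immediately above, namely Theorem~\ref{thm.iterated} together with the identification of the double Thom-Whitney fiber with $B(C,N)$. First I would record the algebraic input in the format required by that construction: here the r\^ole of $A_1\subseteq A_2$ is played by the inclusion of differential graded associative algebras $\End^*(V;F)\subseteq\End^*(V)$, and the r\^ole of $M_1\subseteq M_2$ by the inclusion of differential graded left modules $F\subseteq V$, where $V$ is a left $\End^*(V)$-module via evaluation and $F$ is the corresponding $\End^*(V;F)$-submodule, the two inclusions being compatible. With this data the paragraph preceding the statement already gives a canonical isomorphism $TW^{[2]}(\End^*(V)/\End^*(V;F),V/F)\cong B(C,N)$, where $C=\{a(s)\in\End^*(V)[s,ds]\mid a(0)=0,\ a(1)\in\End^*(V;F)\}$ and $N=\{m(s)\in V[s,ds]\mid m(0)=0,\ m(1)\in F\}$.

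Next I would invoke Theorem~\ref{thm.iterated} verbatim for this pair $(C,N)$: it produces an $A_\infty$ quasi-isomorphism $\int_{\infty}\colon B(C,N)\to N[-1]$ which is supported precisely on the summands $R_1^{\otimes n}\otimes S_1$, i.e.\ on the $(\ast)\,dt$-components $\End^*(V)[s,ds,t]\,dt$ and $V[s,ds,t]\,dt$ that appear in the statement, and vanishes on the remaining summands. Composing $\int_{\infty}$ with the integration $\int_0^1\colon N[-1]\to (V/F)[-2]$ yields the asserted $A_\infty$ morphism to $(V/F)[-2]$. To see that this composite is a quasi-isomorphism I would check that its linear ($n=0$) part is the integration over the whole square $\int_{\Delta^1\times\Delta^1}$, which is exactly the cohomology isomorphism of Corollary~\ref{cor.cohomology-minus-2}; since the linear part of $\int_{\infty}$ is already a quasi-isomorphism by the last line of the proof of Theorem~\ref{thm.iterated}, nothing further is needed.

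The only genuinely computational step is to unwind $\int_0^1\circ\int_{\infty}$ into the displayed iterated-integral formula. Here I would expand each factor $a_i\,p_i(s,t)\,dt$ into monomials $\sum_k a_i\,c_i^{(k)}(s,ds)\,t^k\,dt$, apply the defining formula of $\int_{\infty}$ (which contributes $\int_{n+1}$ in the $t$-variables and the associative product $a_1\cdots a_n v$ in the module $N$, this product being nothing but the iterated evaluation of the endomorphisms on $v$), re-sum the $t$-monomials back into the polynomials $p_i(s,t_i)$, and finally apply $\int_0^1\,ds$. This reconstructs the integral of $p_1(s,t_1)\cdots$ over the product of the $s$-interval with the ordered $t$-simplex, together with the prefactor $a_1\cdots a_n v$ and the vanishing on all other summands, matching the claimed expression modulo $F$.

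I do not expect any real obstacle, since all the homotopical and $A_\infty$-theoretic content is already packaged in Theorem~\ref{thm.iterated} and in the identification $TW^{[2]}=B(C,N)$ coming from Definition~\ref{def.twdoublefiber}. The two points deserving a line of care are checking that $(V,F)$ genuinely fits the \emph{associative algebra/module} pattern of the bar construction (rather than the DG-Lie pattern used elsewhere in the paper), namely that $V$ is a left $\End^*(V)$-module and $F$ its $\End^*(V;F)$-submodule; and verifying that the product occurring in $\int_{\infty}$ is the iterated evaluation $a_1\cdots a_n v=a_1(a_2(\cdots a_n(v)))$. Both are immediate from the definitions, so the bulk of the proof is the bookkeeping of the above integral identity.
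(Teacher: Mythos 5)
Your proposal is correct and follows exactly the paper's own route: the paper derives Theorem~\ref{thm.iterartiperjacobi} precisely as the specialization of Theorem~\ref{thm.iterated} to $A_1=\End^*(V;F)\subseteq A_2=\End^*(V)$, $M_1=F\subseteq M_2=V$, using the identification $TW^{[2]}(A_2/A_1,M_2/M_1)\cong B(C,N)$ and composing $\int_\infty$ with $\int_0^1$. Your extra care about the evaluation module structure and the quasi-isomorphism check via Corollary~\ref{cor.cohomology-minus-2} (equivalently, via Lemma~\ref{lemma.tw-quotient} applied twice) only makes explicit what the paper leaves as ``the above computation gives immediately the following theorem.''
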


Since the Lie bracket on $\Aff(V)=\End^*(V)\oplus V$ is the graded commutator of the associative product 
$(f,v)(g,u)=(fg,f(u))$, the commutator bracket in the associative algebra 
$TW^{[2]}(\End^*(V)/\End^*(V;F),V/F)$ gives exactly the DG-Lie Thom-Whitney double homotopy fiber of the 
Jacobian diagram of the pair $(V,F)$. By well known results about symmetrization of $A_{\infty}$-algebras \cite{LadaMarkl}, the composite of the symmetrization map 
\[ \overline{S}(TW^{[2]}(\End^*(V)/\End^*(V;F),V/F))\to \overline{T}(TW^{[2]}(\End^*(V)/\End^*(V;F),V/F)), \]  
\[ x_1\odot\cdots\odot x_n\mapsto \sum_{\sigma\in \Sigma_n}\epsilon(\sigma)x_{\sigma(1)}\otimes\cdots\otimes x_{\sigma(n)},\]
with the morphism  
\[ \overline{T}(TW^{[2]}(\End^*(V)/\End^*(V;F),V/F))\to \frac{V}{F}[-1], \]  
gives an $L_{\infty}$ quasi-isomorphism $TW^{[2]}(\End^*(V)/\End^*(V;F),V/F)\to V/F[-2]$.

In conclusion, given a formal Abel-Jacobi datum $(\mathfrak{g},\tilde{\mathfrak{g}},V,F,v,\bi)$ 
the above theorem allows, at least in principle, to write explicitly the formal Abel-Jacobi map  on Maurer-Cartan elements  
\[
\mathcal{AJ}_{\MC}\colon \MC_{\tilde{\mathfrak{g}}\hookrightarrow \mathfrak{g}}\to \MC_{V/F[-2]},\]
as the composition of the following maps (for simplicity of notation we omit to write the maximal ideals of the Artin rings):
\begin{enumerate}

\item  the map of Lemma~\ref{lem.modellopiccolofibraomotopica}:
\[ \MC_{\tilde{\mathfrak{g}}\hookrightarrow \mathfrak{g}}=\{x\in \mathfrak{g}^0\mid e^x\ast 0\in \tilde{\mathfrak{g}}\}\to 
\MC_{TW(\tilde{\mathfrak{g}}\hookrightarrow \mathfrak{g})}=\{x(t)\in \MC_{\mathfrak{g}[s,ds]}\mid x(0)=0,\; x(1)\in \tilde{\mathfrak{g}}\},\]
\[ x\mapsto e^{sx}\ast 0\,.\]

\item the map of Lemmas~\ref{isomorphisms.def} and  \ref{lem.sezionecartan}:
\[ \MC_{\mathfrak{g}[s,ds]}\to \MC_{\cone(\Id_{\mathfrak{g}[s,ds]})[t,dt]},\qquad 
x(s)\mapsto e^{-t\flat x(s)}\ast 0\,.\]

\item the Cartan homotopy $\bi^v\colon \mathfrak{g}[s,ds,t,dt]\to \Aff(V)[s,ds,t,dt]$;

\item the $L_{\infty}$ morphism of Theorem~\ref{thm.iterartiperjacobi}.

\end{enumerate}

\section*{Acknowledgments} 
We thank Donatella Iacono and Jim Stasheff for fruitful conversation about the previous versions of this paper, and the referees for their useful comments. Both authors 
acknowledge the very partial support by Italian MIUR under PRIN project 2015ZWST2C ``Moduli spaces and Lie theory''.

\end{document}